\numberwithin{equation}{section}
\theoremstyle{plain}
\newtheorem{thm}[equation]{Theorem}
\newtheorem{cor}[equation]{Corollary}
\newtheorem{lem}[equation]{Lemma}
\newtheorem{prop}[equation]{Proposition}
\newtheorem{definition}[equation]{Definition}
\newtheorem{exa}[equation]{Example}
\theoremstyle{definition}
\theoremstyle{remark}
\providecommand{\C}[1]{\mathcal{#1}}
\providecommand{\R}[1]{\mathrm{#1}}
\DeclareMathOperator{\Gal}{Gal}
\DeclareMathOperator{\tr}{Tr}
\def\Q{{\mathbb Q}}
\def\Z{{\mathbb Z}}
\def\R{{\mathbb R}}
\def\C{{\mathbb C}}
\def\G{{\mathbb G}}
\def\A{{\mathbb A}}
\def\Gal{{\rm Gal}}
\DeclareFontFamily{U}{wncy}{}
\DeclareFontShape{U}{wncy}{m}{n}{%
<5>wncyr5%
<6>wncyr6%
<7>wncyr7%
<8>wncyr8%
<9>wncyr9%
<10>wncyr10%
<11>wncyr10%
<12>wncyr6%
<14>wncyr7%
<17>wncyr8%
<20>wncyr10%
<25>wncyr10}{}
\DeclareMathAlphabet{\cyr}{U}{wncy}{m}{n}
\begin{document}

\title[Counting Integral Points in Certain Homogeneous Spaces]
{Counting Integral Points in Certain Homogeneous Spaces}

\author{Dasheng Wei$^{1,2}$}
\author{Fei Xu$^3$}

\address{$^1$ Academy of Mathematics and System Science,  CAS, Beijing
100190, P.R.China}

\address{$^2$ {Mathematisches Institute, Ludwig-Maximilians-Universit\"at M\"unchen, Theresienstr.39, 80333 M\"unchen, Germany}}

\email{dshwei@amss.ac.cn}

\address{$^3$ School of Mathematical Sciences, Capital Normal University,
Beijing 100048, P.R.China}

\email{xufei@math.ac.cn}

\date{\today}

\maketitle

 \bigskip

\section*{\it Abstract}

The asymptotic formula of the number of integral points in non-compact symmetric homogeneous spaces of semi-simple simply connected algebraic groups is given by the average of the product of the number of local solutions twisted by the Brauer-Manin obstruction. The similar result is also true for homogeneous spaces of reductive groups with some restriction. As application, we will give the explicit asymptotic formulae of the number of integral points of certain norm equations and explain that the asymptotic formula of the number of integral points in Theorem 1.1 of \cite{EMS} is equal to the product of local integral solutions over all primes and answer a question raised by Borovoi related to the example 6.3 in \cite{BR95}.

\bigskip

{\it MSC classification} : 11E72, 11G35, 11R37, 14F22, 14G25, 20G30,
20G35


\bigskip

{\it Keywords} :  integral point, homogeneous space, Tamagawa measure.

\section{Introduction} \label{sec.notation}

The Hardy-Littlewood circle method is the classical method for counting integral points. Once this method can be applied, the asymptotic formula of the number of integral points will be given by the product of the number of local solutions and the local-global principle will be true (see \cite{Sch}). However, the local-global principle can not be held in general. Recently, the existence of the integral points with Brauer-Manin obstruction on various homogeneous spaces has been studied extensively in \cite{CTX}, \cite{Ha08}, \cite{Dem}, \cite{WX}, \cite{WX1} and \cite{BD}. It is natural to ask how the asymptotic formula of the number of integral solutions will be related to Brauer-Manin obstruction for such homogeneous spaces. Indeed, such problem has already been studied by Borovoi and Rudnick in \cite{BR95} for homogeneous spaces of semi-simple groups, where they measured the difference between the number of  integral solutions and that of Hardy-Littlewood expectation by introducing so called the density functions. These density functions were described by using Kottwitz invariant in \cite{BR95}. This paper can be regarded as the continuation of \cite{BR95}.  More precisely, we show that the asymptotic formula of the integral solutions will be given by the average of the product of local solutions twisted by the Brauer-Manin obstruction.

Notation and terminology are standard if not explained. Let $F$ be a
number field, $o_F$ be the ring of integers of $F$, $\Omega_F$
be the set of all primes in $F$ and $\infty_F$ be the set all
infinite primes in $F$. We use $v<\infty_F$ to denote the
finite primes of $F$. Let $F_v$ be the completion of $F$ at
$v$ and $o_{F_v}$ be the local completion of
$o_F$ at $v$ for each $v\in \Omega_F$. Write
$o_{F_v}=F_v$ for $v\in \infty_F$. For any
finite subset $S$ of $\Omega_F$ containing $\infty_F$, the
$S$-integers are defined to be elements in $F$ which are integral
outside $S$ and denoted by $o_S$. We also denote the adeles
of $F$ by $\Bbb A_F$ and ideles of $F$ by $\Bbb I_F=\Bbb G_m(\Bbb A_F)$ and $F_{\infty}=\prod_{v\in \infty_F}F_v $.

Let $G$ be a connected reductive linear algebraic group and $\bf X$
be a separated scheme of finite type over $o_F$ whose generic fiber
$$ X={\bf X} \times_{o_F} F  \cong H \backslash G $$ is a right homogeneous space of $G$, where $H$ is the stabilizer of the fixed point $P\in X(F)$. The map induced by the fixed point $P$ is denoted by
\begin{equation} \label{induces} p: \ G \longrightarrow X  \end{equation}

The obvious necessary condition
for ${\bf X}(o_F)\neq \emptyset$ is
\begin{equation}\label{loc} \prod_{v\in \Omega_F} {\bf
X}(o_{F_v})\neq \emptyset \end{equation} which is
assumed throughout this paper. By the separatedness, one can naturally regard
${\bf X}(o_{F_v})$ as an open and compact subset of $X(F_v)$ with $v$-adic
topology for $v<\infty_F$. Since we will count the integral points by using the coordinates,  we also assume that $X$ is affine and fix the closed immersion
\begin{equation}\label{coordi} X \hookrightarrow  Spec(F[x_1, \cdots, x_n]) . \end{equation}  By Matsushima's criterion (see Theorem 4.17 in \S 4 of \cite{PV}),  the assumption (\ref{coordi}) is equivalent to say that $H$ is reductive as well. For simplicity, we will also assume that $H$ is connected. Therefore any point in $X(E)$ can be regarded as a point in $E^n$ under the closed immersion (\ref{coordi}) for any $F$-algebra $E$. A point $x\in X(F)$ can be written as $x=(z_1, \cdots, z_n)\in F^n$. Then one can define
$$ |x|_{\infty_F}= \max_{v\in \infty_F} \max_{1\leq i\leq n} \{ \ |z_i|_v  \} .$$
For $T>0$, one can set
$$N({\bf X}, T)= \sharp \{x\in {\bf X}(o_F): \ |x|_{\infty_F} \leq T\} $$ and $$X(F_\infty , T)= \{ x\in X(F_\infty): \ |x|_{\infty_F} \leq T \}. $$

Since $X$
admits a unique $G$-invariant gauge form $\omega_{X}$ up to a
scalar and $\omega_{X}$ matches with the gauge forms $\omega_H$ of $H$ and $\omega_G$ of $G$ algebraically in sense of \S 2.4 (P.24) in \cite{W}, the associated Tamagawa measures on $X(\Bbb A_F)$, $H(\Bbb A_F)$ and $G(\Bbb A_F)$ (see
Chapter II in \cite{W}) are denoted by
$$d_{X}=\prod_{v\in \Omega_F} d_v, \ \ \ \nu_{H}=\prod_{v\in \Omega_F} \nu_v \ \ \ \text{and} \ \ \ \lambda_{G}=\prod_{v\in \Omega_F} \lambda_v  $$
respectively.

Let $Br(X)=H_{et}^2(X, \Bbb G_m)$. We would like to write the Brauer-Manin paring (see \cite{Sko}) in multiplicative way
$$ \aligned   X (\Bbb A_F) \times Br(X) & \longrightarrow \mu_\infty = \varinjlim_{n} \mu_n \subset \Bbb C^\times \\
((x_v)_{v\in \Omega_F}, \alpha ) & \mapsto \prod_{v\in \Omega_F} \alpha (x_v) \endaligned
$$
where $\alpha(x_v)$'s are all roots of unity and $\alpha(x_v)=1$ for almost all $v\in \Omega_F$ by fixing the isomorphism $$\Q/\Z \xrightarrow{\cong} \mu_\infty . $$ Then one can view any element in $Br(X)$ as a locally constant $\Bbb C$-value function on $X(\Bbb A_F)$. For any subset $B$ of $Br(X)$, one can define
$$ X(\Bbb A_F)^{B} = \{ (x_v)\in X(\Bbb A_F): \ \prod_{v\in \Omega_F} \alpha (x_v)=1 \ \text{for all $\alpha\in B$} \} . $$

\begin{definition} For any $\xi\in Br(X)$,  one can define
$$N_v ({\bf X} ,  \xi)= \int_{{\bf X}(o_{F_v})} \xi
d_v  $$ for any $v< \infty_F$ and
$$ N_{\infty_F}(X, T,\xi) = \int_{X(F_\infty, T)} \xi d_{\infty_F} $$ for $T>0$.
\end{definition}

The main result of this paper is the following the asymptotic formula.

\begin{thm} \label{main} If $G$ is simply connected and almost $F$-simple such that $G(F_\infty)$ is not compact and $X=H\backslash G$ such that $H$ is the group of fixed points of an involution of $G$, then
$$ N({\bf X}, T) \sim  \sum_{\xi\in (Br(X)/Br(F))} (\prod_{v<\infty_F}  N_v({\bf X}, \xi)) \cdot N_{\infty_F} (X, T, \xi)$$
as $T\rightarrow \infty$ for any separated scheme $\bf X$ of finite type over $o_F$ with $X={\bf X}\times_{o_F} F$.
\end{thm}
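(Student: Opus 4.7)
The plan is to prove the asymptotic in three steps: decompose $X(\mathbb{A}_F)$ into $G(\mathbb{A}_F)$-orbits and identify the integral ones via the Brauer-Manin set; count integral points in each orbit by an Eskin-Mozes-Shah equidistribution; and reassemble the orbit sum into the Brauer-twisted integrals by orthogonality of characters.

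\textbf{Step 1 (Orbit decomposition).} Since $G$ is simply connected, almost $F$-simple, with $G(F_\infty)$ non-compact, strong approximation for $G$ holds off $\infty_F$; combined with strong approximation with Brauer-Manin obstruction for symmetric spaces of such $G$ (\cite{CTX}, \cite{BD}), the closure of $X(F)$ in $X(\mathbb{A}_F)$ is exactly $X(\mathbb{A}_F)^{Br(X)}$. The finite group $Br(X)/Br(F)$ pairs perfectly, under the Manin pairing, with the finite group of $G(\mathbb{A}_F)$-orbits that lie inside that Brauer-Manin set, and each such orbit is the adelic closure of a unique $G(F)$-orbit in $X(F)$.

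\textbf{Step 2 (Equidistribution on each orbit).} For each $G(\mathbb{A}_F)$-orbit $\mathcal{O}$ meeting $X(F)$, pick a base point $Q\in X(F)\cap\mathcal{O}$ with stabilizer $H'$, a pure inner twist of $H$. Applying the Eskin-Mozes-Shah equidistribution theorem for affine symmetric spaces to this orbit gives, with $\Omega_T=\prod_{v<\infty_F}\mathbf{X}(o_{F_v})\times X(F_\infty,T)$,
$$\#\bigl\{x\in(Q\cdot G(F))\cap\mathbf{X}(o_F):|x|_{\infty_F}\leq T\bigr\}\sim \frac{\tau(H')}{\tau(G)}\int_{\mathcal{O}\cap\Omega_T}d_{X}.$$
Weil's theorem gives $\tau(G)=1$, and inner-twist invariance gives $\tau(H')=\tau(H)$; summing over the finitely many integral orbits yields $N(\mathbf{X},T)\sim \tau(H)\int_{X(\mathbb{A}_F)^{Br(X)}\cap\Omega_T}d_{X}$.

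\textbf{Step 3 (Brauer orthogonality).} Since $\xi\in Br(X)/Br(F)$ factors on adeles as a product of local characters, one has $\int_{\Omega_T}\xi\,d_{X}=\prod_{v<\infty_F}N_v(\mathbf{X},\xi)\cdot N_{\infty_F}(X,T,\xi)$. Manin's orthogonality reads $\sum_{\xi}\prod_v\xi(x_v)=|Br(X)/Br(F)|\cdot\mathbf{1}_{X(\mathbb{A}_F)^{Br(X)}}(x)$, so
$$\int_{X(\mathbb{A}_F)^{Br(X)}\cap\Omega_T}d_{X}=\frac{1}{|Br(X)/Br(F)|}\sum_{\xi\in Br(X)/Br(F)}\Bigl(\prod_{v<\infty_F}N_v(\mathbf{X},\xi)\Bigr)N_{\infty_F}(X,T,\xi).$$
Kottwitz's formula for the Tamagawa number of a reductive group gives $\tau(H)=|Br(X)/Br(F)|$ in the symmetric setting, so the Tamagawa factor from Step 2 cancels the $|Br(X)/Br(F)|^{-1}$ from Step 3 and the theorem follows.

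The principal difficulty lies in Step 2: making the equidistribution uniform across all integral $G(\mathbb{A}_F)$-orbits simultaneously, and pinning down the leading constant exactly as a Tamagawa volume (up to the cancellation in Step 3). The hypotheses that $G$ is simply connected, almost $F$-simple with $G(F_\infty)$ non-compact, and that $H$ is the fixed subgroup of an involution, are used precisely to invoke Shah's theorem on unipotent flows and the wavefront lemma on affine symmetric spaces, which together underlie the Eskin-Mozes-Shah count and its required error terms.
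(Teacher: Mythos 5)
Your architecture is the same as the paper's (orbit decomposition, orbit-by-orbit equidistribution, then character orthogonality as in Lemma \ref{average}), but two genuine problems remain. First, the step you yourself flag as the principal difficulty is precisely the content of Sections \ref{sec.orbit} and \ref{sec.mass2} and cannot be waved through: the equidistribution theorems of \cite{DRS}, \cite{EM}, \cite{BO} count points in a single $\Gamma$-orbit, $\Gamma=G(F)\cap St({\bf X})$, with leading constant $\nu_{x,\infty_F}(\Gamma_{H_x}\backslash H_x(F_\infty))/\lambda_{\infty_F}(\Gamma\backslash G(F_\infty))$ against an \emph{archimedean} volume, exactly as in (\ref{equi-dis}); they do not directly give your adelic statement with constant $\tau(H')/\tau(G)$ against $\int_{\mathcal{O}\cap\Omega_T}d_X$. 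Passing from one to the other requires decomposing the rational orbit $Q\cdot G(F)\cap{\bf X}(o_F)$ into $\Gamma$-orbits, identifying that set with an adelic double coset space for the stabilizer (Proposition \ref{refine}), and summing the stabilizer covolumes into $\tau(H')\prod_{v<\infty_F}\nu_v(\cdot)^{-1}$ using strong approximation for $G$ and the matching of the gauge forms $\omega_X$, $\omega_H$, $\omega_G$. This mass-formula bookkeeping is the bulk of the proof of Theorem \ref{semisimple}, and your proposal contains no argument for it.

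Second, your bookkeeping of the Tate--Shafarevich factor is wrong in two places that happen to cancel. A $G(\Bbb A_F)$-orbit meeting $X(F)$ contains not one but exactly $\sharp\ker(H^1(F,H)\rightarrow\prod_{v}H^1(F_v,H))$ rational orbits (Proposition \ref{sha}, where the hypotheses on $G$ give equality), so your Step 2 undercounts by that factor whenever this kernel is nontrivial; and the Sansuc--Kottwitz formula (Theorem 10.1 in \cite{San}) reads $\tau(H)=\sharp Pic(H)/\sharp\ker(H^1(F,H)\rightarrow\prod_v H^1(F_v,H))$ with $\sharp Pic(H)=\sharp(Br(X)/Br(F))$ by Proposition 2.10 of \cite{CTX}, not $\tau(H)=\sharp(Br(X)/Br(F))$. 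The product of your two erroneous factors equals the correct constant $\sharp(Br(X)/Br(F))$, which is why your final formula comes out right, but neither intermediate claim is true in general: already for $H$ a norm-one torus of a non-cyclic extension (which occurs for fixed-point groups of involutions, e.g.\ the characteristic-polynomial varieties of Section \ref{sec.semisimpe}) the kernel is nontrivial. You should either count the rational orbits inside each adelic orbit correctly and then apply the correct Tamagawa number formula, or follow the paper and combine the two steps directly via $\sharp Pic(H)=\sharp\ker(\cdots)\cdot\tau(H)$.
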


 For general reductive groups, one can have the similar result with certain restriction. One of the extreme situation is that $G$ is an algebraic torus and $X$ is the trivial torsor of $G$.  Consider any character
\begin{equation} \label{chara-tor} \chi: \ \ \ G(\Bbb A_F)/G(F)St({\bf X}) \longrightarrow \Bbb C^\times  \ \ \ \text{and} \ \ \  \chi=\prod_{v\in \Omega_F} \chi_v \end{equation} where
$$ St({\bf X}) = \{ g_\Bbb A\in G(\Bbb A_F): \   (\prod_{v\in \Omega_F} {\bf
X}(o_{F_v})) \cdot g_\Bbb A = \prod_{v\in \Omega_F} {\bf
X}(o_{F_v}) \} $$
and $\chi_v$ is induced by
$$ \chi_v: G(F_v) \rightarrow  G(\Bbb A_F)/G(F)St({\bf X}) \xrightarrow{\chi} \Bbb C^\times $$ for any $v\in \Omega_F$ and $\chi_v=1$ for $v\in \infty_F$. Each $\chi_v$ also induces the locally constant function $\overline{\chi}_v$ on $X(F_v)$ by setting
$$\overline{\chi}_v (x_v) = \chi_v (p^{-1}(x_v))$$
for any $x_v\in X(F_v)$, where $p$ is the map induced by (\ref{induces}). Set
$$N_v({\bf X}, \overline{\chi}_v) = \int_{{\bf X}(o_{F_v})} \overline{\chi}_v d_v $$
for $v< \infty_F$.

\begin{thm} If $G$ is an anisotropic torus and $X$ is the trivial torsor of $G$, then
$$ N({\bf X}, T) \sim  \frac{1}{\tau(G)}(\sum_{\chi} \prod_{v<\infty_F}  N_v({\bf X}, \overline{\chi}_v) )\cdot d_{\infty_F} ( X(F_{\infty}, T))$$
as $T\rightarrow \infty$ for any separated scheme $\bf X$ of finite type over $o_F$ with $X={\bf X}\times_{o_F} F$, where $\tau(G)$ is the Tamagawa number of $G$ and $\chi$ runs over all characters in (\ref{chara-tor}).
\end{thm}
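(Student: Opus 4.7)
The plan is to exploit the compactness of $G(\Bbb A_F)/G(F)$---which holds since $G$ is $F$-anisotropic---and apply Fourier analysis on this compact abelian group of Tamagawa volume $\tau(G)$. Because $X$ is the trivial torsor we may identify $X=G$ and $d_X=\lambda_G$; set $K=\prod_{v<\infty_F}St_v({\bf X})$ where $St_v({\bf X})=\{g_v\in G(F_v):{\bf X}(o_{F_v})g_v={\bf X}(o_{F_v})\}$. Then $K$ is a compact open subgroup of $G(\Bbb A_F^{\mathrm{fin}})$ stabilizing $U_f:=\prod_{v<\infty_F}{\bf X}(o_{F_v})$, and $St({\bf X})=G(F_\infty)\cdot K$. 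In particular $G(\Bbb A_F)/G(F)St({\bf X})$ is a finite abelian group whose Pontryagin dual indexes the sum in the statement.

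Put $f_T=\mathbf{1}_{U_f\times X(F_\infty,T)}$ on $G(\Bbb A_F)$, so that $N({\bf X},T)=\sum_{\gamma\in G(F)}f_T(\gamma)=F_T(1)$ for the periodization $F_T(g):=\sum_{\gamma\in G(F)}f_T(g\gamma)$. Expanding $F_T$ in a Fourier series on $G(\Bbb A_F)/G(F)$ and unfolding the coefficients yields
\[
N({\bf X},T)=\frac{1}{\tau(G)}\sum_{\psi}\int_{G(\Bbb A_F)}f_T(g)\,\overline{\psi(g)}\,d\lambda_G(g),
\]
where $\psi$ ranges over the characters of $G(\Bbb A_F)/G(F)$, and each coefficient factors as a product of local integrals. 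By right-translation invariance, the local integral at $v<\infty_F$ vanishes unless $\psi_v|_{St_v({\bf X})}=1$, so only $\psi$ trivial on $K$ survive. Among these, those with $\psi_\infty=1$ descend to characters of $G(\Bbb A_F)/G(F)St({\bf X})$---namely the $\chi$ of the statement---and after reindexing $\chi\leftrightarrow\overline{\chi}$ their total contribution is exactly the claimed main term, the infinite integral being $\int_{X(F_\infty,T)}d_{\infty_F}=d_{\infty_F}(X(F_\infty,T))$.

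The main obstacle is controlling the error: characters $\psi$ trivial on $K$ and $G(F)$ but nontrivial on $G(F_\infty)$. When $G(F_\infty)$ is compact these contribute nothing, since $X(F_\infty,T)=G(F_\infty)$ for large $T$ and $\int_{G(F_\infty)}\overline{\psi_\infty}=0$. In general the Pontryagin dual of $G(F_\infty)/\Gamma'$, with $\Gamma':=G(F)\cap K$, is infinite and one must simultaneously control countably many oscillatory integrals, showing their total is $o(d_{\infty_F}(X(F_\infty,T)))$. The underlying pointwise estimate is that for any fixed nontrivial unitary character of a real algebraic torus, the integral over the coordinate ball $X(F_\infty,T)$ is of smaller order than its Tamagawa volume---on $\Bbb G_m(\Bbb R)$ for instance $\int_{1/T\leq|x|\leq T}|x|^{it}d^{\times}x=O(1/|t|)$ while $\mathrm{vol}\sim\log T$. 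To convert this pointwise oscillation into a uniform bound on the whole dual, I plan to sandwich $\mathbf{1}_{X(F_\infty,T)}$ between smooth compactly supported approximations $f_T^{\pm}$ whose archimedean Fourier transforms decay rapidly enough for absolute convergence, apply pointwise Fourier inversion to the two smoothed periodizations, and squeeze $F_T(1)$ in the limit $T\to\infty$.
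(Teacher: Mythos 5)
Your proposal is correct in substance but takes a genuinely different route from the paper. The paper decomposes ${\bf X}(o_F)$ into orbits of $\Gamma=G(F)\cap St({\bf X})$, counts each orbit by invoking the generalized Dirichlet unit theorem as a black box (this is case 1) of the equidistribution hypothesis (\ref{equi-dis}), quoted from Shyr and Platonov--Rapinchuk), and then converts the resulting expression $d_{\infty_F}(X(F_\infty,T))\cdot r({\bf X}\cdot 1_{\Bbb A})/\lambda_{\infty_F}(\Gamma\backslash G(F_\infty))$ into the character sum via the mass formulae of Section \ref{sec.mass2} (Lemma \ref{stand}, Proposition \ref{arith}, Corollary \ref{key}), specializing Theorem \ref{thm:red} to $H=1$. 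You instead run Poisson summation on the compact group $G(\Bbb A_F)/G(F)$: orthogonality of characters absorbs all of the orbit and mass-formula bookkeeping, which for a torus is a genuine simplification and makes the origin of the character sum transparent. What this buys in algebra it costs in analysis: the equidistribution input that the paper only cites reappears in your argument as the assertion that the non-principal archimedean Fourier coefficients sum to $o(d_{\infty_F}(X(F_\infty,T)))$, and that is where all the work now lives. Your smoothing-and-sandwiching plan is the correct remedy, but be aware that when the rank $r$ of $\Gamma$ exceeds one the individual oscillatory integrals $\int_{X(F_\infty,T)}\overline{\psi}_\infty\, d_{\infty_F}$ are not $O(1)$ as in your $\G_m(\R)$ model: in logarithmic coordinates the region is a dilate of a fixed polytope inside a hyperplane, oscillation saves exactly one power, and these integrals are of size comparable to $(\log T)^{r-1}$; the smooth majorant and minorant must therefore be chosen so that both the boundary discrepancy and the tail of the Fourier series are $O((\log T)^{r-1})$, which is the same precision required in the classical unit-theorem count. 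Two minor points to record: the convergence of $\prod_{v<\infty_F}N_v({\bf X},\overline{\chi}_v)$ rests on the convergence factors built into the Tamagawa measure of the torus, and if the character sum vanishes one should note separately (by the orthogonality argument, as in the corollary following Corollary \ref{key}) that ${\bf X}(o_F)=\emptyset$, so that both sides are identically zero and the asymptotic holds vacuously.
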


The basic idea for proving Theorem \ref{main} is to partition the orbits of the arithmetic group action in ${\bf X}(o_F)$ in proper way and apply the equi-distribution property developed in \cite{DRS}, \cite{EM} and \cite{BO}. The paper is organized as follows. We first study various orbits of ${\bf X}(o_F)$ in Section \ref{sec.orbit}. In order to obtain the similar result for general reductive groups under the assumption that the map on the $F-$points induced by (\ref{induces}) is surjective, we establish the mass formulae associated the Brauer-Manin obstruction following from \cite{X} in Section \ref{sec.mass2}. Such mass formulae with characters are initial from \cite{Kn}, \cite{We62} and \cite{SP}. In Section \ref{sec.main}, we prove our main theorems based on the results on previous sections and the equi-distribution property. As application, we first study the asymptotic formulae of the integral solutions for norm equations in Section \ref{sec.norm}. In Section \ref{sec.semisimpe}, we will explain that the asymptotic formula in Theorem 1.1 of \cite{EMS} can be given by the product of all local solutions and answer a question raised by Borovoi related to the example 6.3 in \cite{BR95} .

\bigskip

\section{orbits} \label{sec.orbit}

Fix a finite subset $S$ containing $\infty_F$ satisfying that there are the group schemes $\bf G$ and $\bf H$ of finite type over $o_S$ whose generic fibers are $G$ and $H$ respectively such that
$$ {\bf X}_S= {\bf X}\times_{o_F} o_S \cong {\bf H} \backslash {\bf G} $$ and $P\in {\bf X}(o_{F_v})$ for all $v\not\in S$.
For each $v\in S\setminus \infty_F$, one can fix a group scheme ${\bf G}_v$ of finite type over $o_{F_v}$ such that ${\bf G}_v \times_{o_{F_v}} F_v=G\times_F F_v$.

\begin{definition}\label{stlocal} For each $v\in \Omega_F$, one defines $$St({\bf X}(o_{F_v}))= \begin{cases}   {\bf G}(o_{F_v}) \ \ \ & v\not\in S \\
 \{ g\in {\bf G}_v(o_{F_v}):  \ \  {\bf X}(o_{F_v})={\bf X}(o_{F_v}) \cdot g \}  \ \ \ & v\in S\setminus \infty_F \\
 G(F_v) \ \ \ &  v\in \infty_F \end{cases} $$
\end{definition}

Then $St({\bf X}(o_{F_v}))$ is an open subgroup of $G(F_v)$
for each $v\in \Omega_F$ and it is compact for $v<\infty_F$.

\begin{lem} \label{localorb}The number of orbits
$$ [{\bf X}(o_{F_v}) / St({\bf X}(o_{F_v}))]$$  is finite.
\end{lem}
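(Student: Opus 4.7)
The plan is to treat the finite and infinite places of $F$ separately, because the nature of both $St({\bf X}(o_{F_v}))$ and ${\bf X}(o_{F_v})$ changes dramatically between the two cases.

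For $v<\infty_F$ I would argue by the slogan ``open orbits inside a compact space.'' The remark after Definition \ref{stlocal} already records that $St({\bf X}(o_{F_v}))$ is an open subgroup of $G(F_v)$, and the separatedness hypothesis makes ${\bf X}(o_{F_v})$ an open and compact subset of $X(F_v)$. For any $x\in X(F_v)$ the orbit map $\phi_x: G\to X$, $g\mapsto x\cdot g$, is a smooth morphism of $F_v$-schemes: geometrically $X$ is a single $G$-orbit, so the differential of $\phi_x$ at the identity is surjective, and right $G$-equivariance spreads this to every point of $G$. The $F_v$-analytic implicit function theorem then forces $\phi_x$ to be an open map on $F_v$-points, whence the image $x\cdot St({\bf X}(o_{F_v}))$ of an open subgroup of $G(F_v)$ is open in $X(F_v)$. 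These orbits form a disjoint open cover of the compact set ${\bf X}(o_{F_v})$, so a finite subcover argument shows only finitely many of them can occur.

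For $v\in\infty_F$, Definition \ref{stlocal} gives $St({\bf X}(o_{F_v}))=G(F_v)$ and ${\bf X}(o_{F_v})=X(F_v)$, and the compactness trick is no longer available. Instead I would appeal to non-abelian Galois cohomology: the set of $G(F_v)$-orbits on $(H\backslash G)(F_v)$ injects, via the standard boundary map, into $\ker\bigl(H^1(F_v,H)\to H^1(F_v,G)\bigr)$, and for the connected reductive group $H$ at an archimedean place $H^1(F_v,H)$ is finite by the classical theorem of Borel and Serre.

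The only real technical subtlety is the openness of $\phi_x$ at the finite places; once the smoothness of the quotient $G\to H\backslash G$ (guaranteed by reductivity, and hence smoothness in characteristic zero, of $H$) is invoked, everything else is either compactness-plus-openness bookkeeping or a direct citation of classical local finiteness.
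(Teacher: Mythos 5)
Your proposal is correct and follows essentially the same route as the paper: at finite places the paper likewise uses smoothness of the orbit map (citing Proposition 3.3 of Chapter 3 in \cite{PR94} for openness on $F_v$-points) together with openness of $St({\bf X}(o_{F_v}))$ and compactness of ${\bf X}(o_{F_v})$, and at archimedean places it likewise bounds the orbit set by $\sharp H^1(F_v,H)$, which is finite by the classical local finiteness theorem. No substantive differences.
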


\begin{proof} If $v\in \infty_F$, then $$ \sharp [{\bf X}(o_{F_v}) / St({\bf X}(o_{F_v}))] \leq \sharp H^1(F_v, H) $$
which is finite by Theorem 6.14 and Corollary 2 of Chapter 6 in \cite{PR94}.

For any $x\in {\bf X}(o_{F_v})$ with $v<\infty_F$, the morphism induced by the point $x$
$$ f_x: \ \ \  G\cong \{x \} \times_F G \longrightarrow  X \times_F G\xrightarrow{m} X$$ is dominant and smooth.
By Proposition 3.3 of Chapter 3 in \cite{PR94}, we have that $f_x$ is an open map over $F_v$ points. This implies that $x St({\bf X}(o_{F_v}))$ is open in ${\bf X}(o_{F_v})$. By compactness of ${\bf X}(o_{F_v})$, one concludes $[{\bf X}(o_{F_v})/ St({\bf X}(o_{F_v}))]$ is finite.

\end{proof}

Define $$St({\bf X})= \prod_{v\in \Omega_F} St({\bf X}(o_{F_v}))$$
Then $St({\bf X})$ is an open subgroup of $G(\Bbb A_F)$. We set
$${\bf X} \cdot {\sigma_{\Bbb A}} := (\prod_{v\in \Omega_F}{\bf X}(o_{F_v}) \cdot  \sigma_{v})\subseteq
 X (\Bbb A_F)  $$ for any $\sigma_\Bbb A=(\sigma_v)_{v\in \Omega_F}\in
G(\Bbb A_F)$. In particular,
$${\bf X} \cdot {1_\Bbb A} =\prod_{v\in \Omega_F} {\bf X}(o_{F_v}). $$
It is clear that $\sigma_{\Bbb A}^{-1}St({\bf X})\sigma_{\Bbb A}$ acts on $ {\bf X} \cdot \sigma_{\Bbb A}$.

 \begin{cor} \label{adelorb} The number of orbits $${\bf X} \cdot \sigma_{\Bbb A}/\sigma_{\Bbb A}^{-1}St({\bf X}))\sigma_{\Bbb A} $$ is finite.

\end{cor}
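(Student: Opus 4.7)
The plan is to reduce to the base case $\sigma_{\Bbb A}=1_{\Bbb A}$ and then exhibit the adelic orbit count as an honest product over the places of $F$. First I would note that right translation by $\sigma_{\Bbb A}^{-1}$ is a bijection ${\bf X}\cdot\sigma_{\Bbb A} \to \prod_{v\in\Omega_F}{\bf X}(o_{F_v})$ intertwining the right $\sigma_{\Bbb A}^{-1}St({\bf X})\sigma_{\Bbb A}$-action on the source with the right $St({\bf X})$-action on the target. So it is enough to show that $\bigl(\prod_v {\bf X}(o_{F_v})\bigr)/St({\bf X})$ is finite.

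Because $St({\bf X})=\prod_v St({\bf X}(o_{F_v}))$ and the acted-upon set is a literal product indexed by $\Omega_F$, with componentwise action, the orbit set factors as
$$\Bigl(\prod_{v\in\Omega_F}{\bf X}(o_{F_v})\Bigr)\Big/St({\bf X}) \;=\; \prod_{v\in\Omega_F}\bigl({\bf X}(o_{F_v})/St({\bf X}(o_{F_v}))\bigr).$$
By Lemma \ref{localorb} each local factor is a finite set, so the remaining task is to verify that all but finitely many factors are singletons.

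For this I would look at $v\notin S$, where $St({\bf X}(o_{F_v}))={\bf G}(o_{F_v})$, and I would aim to show that ${\bf G}(o_{F_v})$ acts transitively on ${\bf X}(o_{F_v})$. Since ${\bf X}_S\cong {\bf H}\backslash {\bf G}$ with ${\bf H}$ smooth connected reductive over $o_S$ (possibly after enlarging $S$), the standard étale exact sequence of pointed sets
$${\bf G}(o_{F_v})\longrightarrow {\bf X}(o_{F_v})\longrightarrow H^1_{\et}(o_{F_v},{\bf H})$$
reduces the transitivity statement to the vanishing of $H^1_{\et}(o_{F_v},{\bf H})$. This vanishing follows from Lang's theorem applied to the closed fiber ${\bf H}\times_{o_{F_v}} k(v)$ together with Hensel lifting via smoothness of ${\bf H}$ over $o_{F_v}$. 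Hence the $v$-th local factor is a singleton for every $v\notin S$, and the product above is finite.

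The main (and essentially only) obstacle is the local $H^1$ vanishing at almost all finite places; everything else is a cosmetic manipulation of orbit sets. Once that vanishing is granted, the corollary is immediate from the product decomposition and Lemma \ref{localorb}.
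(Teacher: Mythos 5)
Your proof is correct and follows essentially the same route as the paper: a componentwise decomposition of the orbit set, Lemma \ref{localorb} at the finitely many bad places, and the vanishing of $H^1_{\et}(o_{F_v},{\bf H})$ via Lang's theorem and Hensel's lemma at the remaining places. The only (cosmetic) difference is that you translate globally by $\sigma_{\Bbb A}^{-1}$ at the outset, whereas the paper enlarges $S$ to a set $S_1$ containing the places where $\sigma_v\notin{\bf G}(o_{F_v})$ and performs the translation place by place inside $S_1$.
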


\begin{proof} Write $\sigma_\Bbb A=(\sigma_v)_{v\in \Omega_F}$. There is a finite subset $S_1\supseteq S$ such that $\sigma_v\in {\bf G}(o_{F_v})$ for all $v\not \in S_1$. Then one has the component-wise bijection
$$ {\bf X} \cdot \sigma_{\Bbb A}/ \sigma_{\Bbb A}^{-1}  St({\bf X}) \sigma_{\Bbb A}
 \cong [\prod_{v\in S_1} {\bf X}(o_{F_v}) \cdot \sigma_v / \sigma_v^{-1} St({\bf X}(o_{F_v})) \sigma_v ] \times [\prod_{v\not\in S_1} {\bf X}(o_{F_v})/{\bf G}(o_{F_v})].$$

For each $v\not \in S_1$, one has the short exact sequence as pointed sets
$$ 1\rightarrow {\bf H}(o_{F_v}) \rightarrow {\bf G} (o_{F_v}) \rightarrow {\bf X}(o_{F_v}) \rightarrow H^1_{et}(o_{F_v}, {\bf H}). $$
Since $\bf H$ is connected, one has $H^1_{et}(o_{F_v}, {\bf H})=1$ by Hensel's Lemma and Lang's Theorem and $|{\bf X}(o_{F_v})/ {\bf G}(o_{F_v})|=1$ for $v\not \in S_1$.

For each $v\in S_1$, the map
$${\bf X}(o_{F_v}) \cdot \sigma_v / \sigma_v^{-1} St({\bf X}(o_{F_v})) \sigma_v \rightarrow {\bf X}(o_{F_v})/ St({\bf X}(o_{F_v})) ; \ \ \ \ \ \bar{x} \mapsto \overline{ x \cdot \sigma_v^{-1}} $$
is also bijective.  The result follows from Lemma \ref{localorb}.
\end{proof}

\begin{definition} Let $\Gamma = G(F) \cap St({\bf X})$. \end{definition}

 It is clear that $\Gamma$ acts on ${\bf X}(o_F)$.

\begin{cor} \label{globorb} The number of orbits ${\bf X}(o_F)/ \Gamma $ is finite.

\end{cor}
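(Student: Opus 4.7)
The plan is to lift the finiteness from the adelic setting of Corollary \ref{adelorb} to the integral setting using the diagonal embedding $X(F) \hookrightarrow X(\Bbb A_F)$. Since $\Gamma = G(F) \cap St({\bf X}) \subset St({\bf X})$, the diagonal map ${\bf X}(o_F) \hookrightarrow {\bf X} \cdot 1_\Bbb A$ is $\Gamma$-equivariant and induces
$$
\pi \colon {\bf X}(o_F)/\Gamma \longrightarrow ({\bf X} \cdot 1_\Bbb A)/St({\bf X}).
$$
Corollary \ref{adelorb} makes the right-hand side finite, so the problem reduces to proving that each fibre of $\pi$ is finite.

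To analyse a fibre, I would fix $y \in {\bf X}(o_F)$ and let $H_y = \mathrm{Stab}_G(y)$, which is a connected reductive inner $F$-form of $H$. The fibre $\pi^{-1}([y])$ collects the $\Gamma$-orbits of points $x \in {\bf X}(o_F)$ admitting some $s \in St({\bf X})$ with $x = y \cdot s$ in $X(\Bbb A_F)$. To each such $x$ I would attach the transporter $T_x = \{g \in G : y\cdot g = x\}$, a right $H_y$-torsor over $F$ that has an $F_v$-point at every $v$ (namely $s_v$); consequently $[T_x] \in \mathrm{Sha}^1(F, H_y)$, a finite set for connected reductive $H_y$ over a number field. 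This partitions $\pi^{-1}([y])$ into finitely many pieces indexed by $[T_x]$.

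It remains to show each piece is a finite union of $\Gamma$-orbits. For the trivial class $[T_x] = 0$ the corresponding $x$ lie in the $G(F)$-orbit $yG(F)$, and the $\Gamma$-orbits are parametrized by the double coset space
$$
H_y(F) \backslash \{g \in G(F) : y\cdot g \in {\bf X}(o_F)\} / \Gamma,
$$
where the middle set is $G(F) \cap \Omega$ for an open subset $\Omega = G(F_\infty)\cdot K \subset G(\Bbb A_F)$ with $K \subset G(\Bbb A_F^{\mathrm{fin}})$ compact open; this is finite by Borel's classical class number theorem applied to $H_y$. The non-trivial classes in $\mathrm{Sha}^1(F, H_y)$ would be treated by replacing $y$ with an $F$-rational representative of the corresponding twisted $G(F)$-orbit (when one exists) and repeating the argument.

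The main obstacle will be the last step: interweaving the $\mathrm{Sha}^1(F, H_y)$ decomposition with the class-number finiteness for each twist, and bookkeeping the representatives of distinct $G(F)$-orbits carefully. A cleaner alternative, which I would explore first, is to bypass the explicit torsor decomposition and identify $\pi^{-1}([y])$ as a quotient of a suitable compact-open subset of $G(\Bbb A_F)$ by $H_y(\Bbb A_F) \cap St({\bf X})$ on the left and $\Gamma$ on the right, then invoke the finiteness of class numbers for $H_y$ in one stroke.
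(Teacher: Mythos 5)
You take a genuinely different, and more careful, route than the paper. The paper's own proof is one line: it asserts that the natural map from ${\bf X}(o_F)/\Gamma$ to $\prod_{v}{\bf X}(o_{F_v})/St({\bf X}(o_{F_v}))$ is injective and quotes Corollary \ref{adelorb}. But two integral points with the same image under that map are exactly two points equivalent under the relation $\sim$ of Definition \ref{locquiv}, and the paper itself only records $\sharp({\bf X}(o_F)/\sim)\leq\sharp({\bf X}(o_F)/\Gamma)$; the failure of the reverse inequality is precisely what Propositions \ref{sha} and \ref{refine} later measure, so the injectivity claim is at best unjustified as stated. Your plan --- map to the finite set $({\bf X}\cdot 1_{\Bbb A})/St({\bf X})$ and bound each fibre by $\sharp\ker(H^1(F,H_y)\to\prod_v H^1(F_v,H_y))$ times a class number of $H_y$ --- is the honest version of the argument, and its two ingredients are exactly the content of Proposition \ref{sha} (finitely many $G(F)$-orbits per fibre) and of Lemma \ref{bij}/Proposition \ref{refine} together with Borel's finiteness theorem (finitely many $\Gamma$-orbits per $G(F)$-orbit); what you buy is a proof that does not presuppose injectivity, what the paper buys is brevity. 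The one step to tighten is your description of the trivial-class piece: the set $\{g\in G(F):y\cdot g\in{\bf X}(o_F)\cap ySt({\bf X})\}$ equals $G(F)\cap H_y(\Bbb A_F)St({\bf X})$, whose component at a finite place $v$ is $H_y(F_v)$ times finitely many $St({\bf X}(o_{F_v}))$-cosets and hence is not compact when $H_y(F_v)$ is not; so it is not of the form $G(F_\infty)\cdot K$ with $K$ compact open, and its quotient by $\Gamma$ alone is typically infinite. What is finite is the two-sided quotient $H_y(F)\backslash(H_y(\Bbb A_F)\cap G(F)St({\bf X}))/(H_y(\Bbb A_F)\cap St({\bf X}))$, reached by the manipulation of Lemma \ref{bij} and dominated by the class set of $H_y$; this is precisely the ``cleaner alternative'' you sketch in your last sentence, so the plan does close up once that reduction is carried out.
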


\begin{proof} Since the natural map
$$ {\bf X}(o_F)/  \Gamma  \longrightarrow (\prod_{v\in S} {\bf X}(o_{F_v})/ St({\bf X}(o_{F_v})) )
\times (\prod_{v\not\in S}{\bf X}(o_{F_v})/  {\bf G}(o_{F_v})) $$ is injective, the result follows from Corollary \ref{adelorb}.

\end{proof}

\begin{lem} \label{bij} For any $u_\Bbb A\in G(\Bbb A_F)$, the  natural map
$$ \aligned  &  H(F) \backslash H(\Bbb A_F) u_\Bbb A (\sigma_{\Bbb A}^{-1}St({\bf X})\sigma_{\Bbb A}) / (\sigma_{\Bbb A}^{-1}St({\bf X})\sigma_{\Bbb A})
\\
\xrightarrow{\cong}  & H(F) \backslash H(\Bbb A_F) / [H(\Bbb A_F) \cap ((u_\Bbb A\sigma_{\Bbb A}^{-1})St({\bf X})(u_\Bbb A\sigma_{\Bbb A}^{-1})^{-1})]
\endaligned $$ by sending $h_\Bbb A u_\Bbb A$ to $h_\Bbb A$ is bijective.
\end{lem}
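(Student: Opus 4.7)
The plan is to abbreviate $K = \sigma_\Bbb A^{-1}St({\bf X})\sigma_\Bbb A$ and $K_u = u_\Bbb A K u_\Bbb A^{-1} = (u_\Bbb A \sigma_\Bbb A^{-1}) St({\bf X}) (u_\Bbb A\sigma_\Bbb A^{-1})^{-1}$, so that the right-hand side becomes $H(F) \backslash H(\Bbb A_F)/ (H(\Bbb A_F) \cap K_u)$. After this rewriting the statement is an instance of the standard identity $H \backslash H g K / K \cong H/(H \cap gKg^{-1})$ in an ambient group $G$, so the proof is purely formal set-theoretic bookkeeping: one constructs the map $h_\Bbb A u_\Bbb A \mapsto h_\Bbb A$ and checks well-definedness, surjectivity and injectivity directly.

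For well-definedness, suppose $h_\Bbb A u_\Bbb A k_1 = h_\Bbb A' u_\Bbb A k_2$ with $k_1, k_2 \in K$. Then $h_\Bbb A^{-1} h_\Bbb A' = u_\Bbb A(k_1 k_2^{-1})u_\Bbb A^{-1}$ lies simultaneously in $H(\Bbb A_F)$ and in $K_u$, hence in $H(\Bbb A_F) \cap K_u$, so the class of $h_\Bbb A$ modulo that intersection is independent of the decomposition; the left $H(F)$-action is compatible on both sides, so the map descends to the quotient. Surjectivity is immediate, since for any $h_\Bbb A \in H(\Bbb A_F)$ the element $h_\Bbb A u_\Bbb A$ lies in $H(\Bbb A_F) u_\Bbb A K$ and visibly hits the class of $h_\Bbb A$.

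For injectivity, suppose $h_\Bbb A$ and $h_\Bbb A'$ project to the same class on the right-hand side. Then there exist $\gamma \in H(F)$ and $\kappa \in H(\Bbb A_F) \cap K_u$ with $h_\Bbb A' = \gamma h_\Bbb A \kappa$; writing $\kappa = u_\Bbb A k u_\Bbb A^{-1}$ for some $k \in K$ gives $h_\Bbb A' u_\Bbb A = \gamma(h_\Bbb A u_\Bbb A) k$, so the two representatives determine the same class in $H(F) \backslash H(\Bbb A_F) u_\Bbb A K / K$. There is no real obstacle here; the only care needed is to keep the conjugation by $u_\Bbb A$ straight so that the intersection $H(\Bbb A_F) \cap K_u$ (rather than $H(\Bbb A_F) \cap K$) appears as the stabilizer on the right, matching the form stated in the lemma.
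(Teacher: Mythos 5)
Your proof is correct and is precisely the ``direct verification'' that the paper itself invokes without writing out: the standard double-coset identity $H(F)\backslash H(\Bbb A_F) u_\Bbb A K / K \cong H(F)\backslash H(\Bbb A_F)/(H(\Bbb A_F)\cap u_\Bbb A K u_\Bbb A^{-1})$, checked by hand. The well-definedness, surjectivity and injectivity computations are all accurate, including the key point that the conjugated group $u_\Bbb A K u_\Bbb A^{-1}=(u_\Bbb A\sigma_\Bbb A^{-1})St({\bf X})(u_\Bbb A\sigma_\Bbb A^{-1})^{-1}$ is what appears in the intersection.
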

\begin{proof} It follows from the direct verification.

\end{proof}

\begin{prop} With the map (\ref{induces}), the double coset decomposition
$$ H(F) \backslash p^{-1}( {\bf X} \cdot \sigma_\Bbb A) /(\sigma_{\Bbb A}^{-1} St({\bf X})\sigma_{\Bbb A}) $$ is finite for any given $\sigma_\Bbb A\in G(\Bbb A_F)$.

\end{prop}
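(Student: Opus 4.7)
The plan is to reduce the finiteness to two ingredients already at hand: the finiteness of the $St({\bf X})$-orbits on ${\bf X}\cdot\sigma_\Bbb A$ (Corollary \ref{adelorb}) and the classical finiteness of class numbers for the connected reductive group $H$.

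First I would exploit the fact that $p$ is equivariant for the right action of $\sigma_\Bbb A^{-1} St({\bf X})\sigma_\Bbb A$, and that the fibres of the adelic map $p\colon G(\Bbb A_F)\to X(\Bbb A_F)$ are exactly the left $H(\Bbb A_F)$-orbits. By Corollary \ref{adelorb}, ${\bf X}\cdot\sigma_\Bbb A$ breaks into finitely many orbits under $\sigma_\Bbb A^{-1} St({\bf X})\sigma_\Bbb A$; picking representatives of those orbits that meet $p(G(\Bbb A_F))$ and lifting them to $u_{\Bbb A,i}\in G(\Bbb A_F)$, $i=1,\dots,m$, I would obtain the finite union
$$ p^{-1}({\bf X}\cdot\sigma_\Bbb A) = \bigcup_{i=1}^m H(\Bbb A_F)\, u_{\Bbb A,i}\,(\sigma_\Bbb A^{-1} St({\bf X})\sigma_\Bbb A). $$
Quotienting by $H(F)$ on the left and $\sigma_\Bbb A^{-1} St({\bf X})\sigma_\Bbb A$ on the right and applying Lemma \ref{bij} to each piece identifies the set in question with the disjoint union, over $i=1,\dots,m$, of
$$ H(F) \backslash H(\Bbb A_F) \big/ \bigl(H(\Bbb A_F) \cap (u_{\Bbb A,i}\sigma_\Bbb A^{-1}) St({\bf X}) (u_{\Bbb A,i}\sigma_\Bbb A^{-1})^{-1}\bigr). $$

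Next I would analyse the denominator $K_i$ place by place. At each finite $v$, $St({\bf X}(o_{F_v}))$ is compact open in $G(F_v)$ (for $v\notin S_1$ it equals ${\bf G}(o_{F_v})$, so $K_{i,v}={\bf H}(o_{F_v})$ for almost all $v$), and conjugation followed by intersection with $H(F_v)$ keeps it compact open in $H(F_v)$. At each infinite $v$, Definition \ref{stlocal} gives $St({\bf X}(o_{F_v}))=G(F_v)$, and $K_{i,v}=H(F_v)$. Thus $K_i = H(F_\infty)\times U_i$ with $U_i$ compact open in $H(\Bbb A_F^f)$, and the finiteness of $H(F)\backslash H(\Bbb A_F)/K_i$ is exactly the classical finiteness of class number for the connected reductive $F$-group $H$ (e.g.\ Theorem 5.1 of \cite{PR94}). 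Summing over the finitely many $i$ concludes the argument.

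The only subtle point is the adelic preimage decomposition: I have to be sure that $St$-orbits on ${\bf X}\cdot\sigma_\Bbb A$ which do not meet $p(G(\Bbb A_F))$ simply contribute empty preimages and may be discarded, so no cohomological obstruction interferes with the indexing by finitely many $u_{\Bbb A,i}$. Once this observation is in place, the rest is formal combination of Lemma \ref{bij} with the cited finiteness theorems and need only be spelled out in a few lines.
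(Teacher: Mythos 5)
Your proposal is correct and follows essentially the same route as the paper's proof: reduce to finitely many $H(\Bbb A_F)$-pieces via Corollary \ref{adelorb}, identify each piece with a class set of $H$ via Lemma \ref{bij}, and conclude by Theorem 5.1 of Chapter 5 in \cite{PR94} after checking the local components of the open subgroup. Your explicit remark that orbits missing $p(G(\Bbb A_F))$ contribute empty preimages is in fact slightly more careful than the paper, which states the corresponding correspondence as a bijection.
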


\begin{proof} Since the map $p$ induces the bijection
$$  H(\Bbb A_F) \backslash p^{-1}( {\bf X} \cdot \sigma_\Bbb A) /(\sigma_{\Bbb A}^{-1} St({\bf X})\sigma_{\Bbb A}) \cong {\bf X} \cdot \sigma_\Bbb A /(\sigma_{\Bbb A}^{-1} St({\bf X})\sigma_{\Bbb A}) , $$
one only needs to show that the further double coset decomposition for each piece
$$ H(F) \backslash H(\Bbb A_F) u_\Bbb A (\sigma_{\Bbb A}^{-1}St({\bf X})\sigma_{\Bbb A}) / (\sigma_{\Bbb A}^{-1}St({\bf X})\sigma_{\Bbb A}) $$ is finite by Corollary \ref{adelorb}.

By Lemma \ref{bij}, one only needs to show the finiteness of the following double coset decomposition
$$H(F) \backslash H(\Bbb A_F) / [H(\Bbb A_F) \cap ((u_\Bbb A\sigma_{\Bbb A}^{-1})St({\bf X})(u_\Bbb A\sigma_{\Bbb A}^{-1})^{-1})]. $$ Indeed, the $v$-component of $$H(\Bbb A_F) \cap ((u_\Bbb A\sigma_{\Bbb A}^{-1})St({\bf X})(u_\Bbb A\sigma_{\Bbb A}^{-1})^{-1})$$ is an open subgroup of $H(F_v)$. Moreover, it is equal to $H(F_v)$ for $v\in \infty_F$ and is equal to ${\bf H}(\frak o_{F_v})$ for almost all $v\in \Omega_F$. The result follows from Theorem 5.1 of Chapter
5 in \cite{PR94}.
\end{proof}

\begin{definition} \label{locquiv} For any $x,y \in {\bf X}(o_F)$, we define the equivalent relation over ${\bf X}(o_F)$
$$ x\sim y \ \ \ \Leftrightarrow \ \ \ x=y \cdot s_\Bbb A $$ for some $s_\Bbb A\in St({\bf X})$. The set of the equivalent classes is denoted by ${\bf X}(o_F)/\sim $.

\end{definition}

It is clear that
$$\sharp ({\bf X}(o_F)/\sim) \leq \sharp ({\bf X}(o_F)/ \Gamma) < \infty$$
by Corollary \ref{globorb}.

\begin{prop} \label{equiv}

\item

 1).  If $G$ is semi-simple and simply connected such that $G'(F_\infty)$ is not compact for any simple factor of $G$, then the diagonal map
$$ {\bf X}(o_F)/\sim  \ \ \xrightarrow{\cong}  \ ({\bf X} \cdot 1_{\Bbb A})^{Br(X)}/St({\bf X}) $$
is bijective.
\medskip

\item
2).  If the map $G(F)\xrightarrow{p} X(F)$ in (\ref{induces}) is surjective, there is a bijection
$$  {\bf X}(o_F)/\sim  \ \xrightarrow{\cong}  \ H(\Bbb A_F) \backslash (p^{-1}({\bf X} \cdot 1_\Bbb A))  \cap  (H(\Bbb A_F) G(F)St({\bf X}))/St({\bf X}) $$

\end{prop}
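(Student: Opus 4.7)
The plan is to treat the two parts separately. Part~2) is a direct group-theoretic verification using surjectivity of the orbit map $p$ on $F$-points, whereas part~1) relies on strong approximation with Brauer--Manin obstruction as the essential non-elementary input.

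For part~2), I would construct the candidate bijection by sending the class of $x \in {\bf X}(o_F)$ to the double coset of any $F$-rational lift $g_x \in G(F)$ with $p(g_x) = x$, available by the surjectivity hypothesis. The steps I would check are: (i)~the class of $g_x$ in the double quotient is independent of the lift, since two lifts differ by left multiplication by an element of $H(F) \subset H(\Bbb A_F)$; (ii)~the map descends to $\sim$, because if $x = y \cdot s_\Bbb A$ with $s_\Bbb A \in St({\bf X})$ then $g_x$ and $g_y s_\Bbb A$ both lift $x$, hence agree modulo $H(\Bbb A_F)$ on the left and modulo $St({\bf X})$ on the right; (iii)~injectivity, by applying $p$ to an equation $g_x = h g_y s_\Bbb A$ to recover $x = y \cdot s_\Bbb A$; (iv)~surjectivity, by writing a representative as $u_\Bbb A = h g s_\Bbb A$ and setting $x := p(g) \in X(F)$, where the defining condition $p(u_\Bbb A) = x \cdot s_\Bbb A \in \prod_v {\bf X}(o_{F_v})$ combined with the right-stabilization property of $St({\bf X})$ forces $x \in {\bf X}(o_F)$, and the image of $[x]$ is the class of $g$, which coincides with the class of $u_\Bbb A$.

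For part~1), the candidate map is the diagonal embedding ${\bf X}(o_F) \hookrightarrow \prod_v {\bf X}(o_{F_v}) = {\bf X} \cdot 1_\Bbb A$ followed by passage to the $St({\bf X})$-quotient. Its image lies in $X(\Bbb A_F)^{Br(X)}$ by Hilbert reciprocity (the sum of local invariants of a Brauer class of $F$ vanishes), and the equivalence $\sim$ is by construction the restriction to integral points of the $St({\bf X})$-orbit relation, so the map is well-defined and injective. For surjectivity, given $(y_v) \in ({\bf X} \cdot 1_\Bbb A)^{Br(X)}$, I need $x \in {\bf X}(o_F)$ in the orbit $(y_v) \cdot St({\bf X})$. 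That orbit is open in $X(\Bbb A_F)$ because $St({\bf X})$ is open in $G(\Bbb A_F)$ and the orbit map is open on local points (smoothness, as invoked in the proof of Lemma~\ref{localorb}); since Brauer elements act as locally constant functions on $X(\Bbb A_F)$, a small enough sub-neighborhood of $(y_v)$ inside this orbit still lies in $X(\Bbb A_F)^{Br(X)}$. Under the stated hypotheses on $G$, the strong approximation theorem with Brauer--Manin obstruction of \cite{CTX} and \cite{BD} asserts that $X(F)$ is dense in $X(\Bbb A_F)^{Br(X)}$, so this non-empty open subset of the Brauer--Manin set contains a rational point, which is then automatically in ${\bf X}(o_F)$ and has the required orbit.

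The principal obstacle is invoking this strong-approximation-with-Brauer--Manin theorem correctly: it is the deep input to the argument, and is precisely where the assumptions that $G$ is semi-simple, simply connected, and has no compact $F$-simple factor at infinity are needed. Once that is in hand, the rest is essentially topological (openness of $St({\bf X})$-orbits on $X(\Bbb A_F)$ and local constancy of the Brauer pairing) together with a routine diagram chase for the group-theoretic map in part~2).
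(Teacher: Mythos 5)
Your proposal is correct and follows essentially the same route as the paper: part 1) combines openness of $St({\bf X})$-orbits with strong approximation with Brauer--Manin obstruction from \cite{CTX}/\cite{BD} to get surjectivity, injectivity being immediate from the definition of $\sim$; part 2) is the paper's explicit double-coset bijection, merely constructed in the opposite direction (from ${\bf X}(o_F)/\sim$ to the double coset space rather than the reverse), with the surjectivity of $p$ on $F$-points used at the corresponding step. No gaps.
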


\begin{proof}
\item
 1). Since $St({\bf X})$ acts on $({\bf X} \cdot 1_{\Bbb A})^{Br(X)}$ by Corollary 3.6 in \cite{BD}, it is clear that the diagonal map is injective. Since any orbit of $St({\bf X})$ is open by the proof of Lemma \ref{localorb}, there is an integral point in ${\bf X}(o_F)$ for any orbit of $St({\bf X})$ inside $({\bf X} \cdot 1_{\Bbb A})^{Br(X)}$ by the proof of Theorem 3.7 (b) in \cite{CTX}. This implies that the diagonal map is surjective.

\medskip

 \item
2).
 For any element $H(\Bbb A_F) u_\Bbb A St({\bf X})$ in the right side, there are $g\in G(F)$, $h_\Bbb A\in H(\Bbb A_F)$ and $s_\Bbb A\in St({\bf X})$ such that $u_\Bbb A=h_\Bbb A \cdot g \cdot s_\Bbb A$. Then $P\cdot g\cdot s_\Bbb A\in {\bf X} \cdot 1_\Bbb A$. Therefore $P\cdot g\in {\bf X}(o_F)$. One can define the map
$$\phi: \  H(\Bbb A_F) u_\Bbb A St({\bf X}) \mapsto [P\cdot g] $$ For the different choices of $g'\in G(F)$, $h_\Bbb A'\in H(\Bbb A_F)$ and $s_\Bbb A'\in St({\bf X})$, one has $$P\cdot g \sim P\cdot g' . $$ Therefore $\phi$ is well-defined and injective. Since the map $G(F)\xrightarrow{p} X(F)$ in (\ref{induces}) is surjective, one can verify directly that $\phi$ is bijective.
\end{proof}

Write  \begin{equation} \label{partition}{\bf X}(o_F) \  =  \ \bigcup_{i}  ({\bf X}(o_F)\cap x_i St({\bf X}))  \end{equation}  with $x_i\in {\bf X}(o_F)$.  For $$y, z \in {\bf X}(o_F)\cap x_i St({\bf X}), $$ we can define the further equivalent relation $\sim_G$ on ${\bf X}(o_F)\cap x_i St({\bf X})$ as follows
$$ y\sim_G z \ \ \Leftrightarrow \ \ y=z\cdot g $$ if there is $g\in G(F)$.

The following result is proved implicitly in the proof of Theorem 4.2 in \cite{BR95}. For convenience, we will provide the proof as well.

\begin{prop} \label{sha}
$$\sharp (({\bf X}(o_F)\cap x_i St({\bf X}))/\sim_G) \leq  \sharp (ker (H^1(F, H)\rightarrow \prod_{v\in \Omega_F} H^1(F_v, H))). $$

If $G$ is semi-simple and simply connected and $G'(F_\infty)$ is not compact for any simple factor $G'$ of $G$, then the equality holds.
\end{prop}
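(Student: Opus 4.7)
The plan is to reduce the counting of $G(F)$-orbits in ${\bf X}(o_F)\cap x_i St({\bf X})$ to a Galois cohomology computation, following the orbit classification implicit in the proof of Theorem 4.2 of \cite{BR95}. Since $X\cong H\backslash G$ is a right homogeneous space, every $y\in X(F)$ admits a lift $g_y\in G(\overline{F})$ with $y=P\cdot g_y$, and the assignment $\sigma\mapsto \sigma(g_y)\,g_y^{-1}$ is a $1$-cocycle with values in $H(\overline{F})$ whose class $c(y)\in H^1(F,H)$ is independent of the choice of lift. The standard fact that two points $y_1,y_2\in X(F)$ lie in the same $G(F)$-orbit if and only if $c(y_1)=c(y_2)$ gives a well-defined injection from the set of $G(F)$-orbits in $X(F)$ into $H^1(F,H)$.

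Restricting to $y\in{\bf X}(o_F)\cap x_i St({\bf X})$: writing $y=x_i\cdot s_{\Bbb A}$ with $s_{\Bbb A}=(s_v)\in St({\bf X})$, at each place $v$ one can choose lifts so that $g_y=h_v\,g_{x_i}\,s_v$ for some $h_v\in H(\overline{F_v})$, and a direct computation with the cocycle formula yields $c(y)|_v=c(x_i)|_v$ in $H^1(F_v,H)$ for every $v$. The standard twisting bijection in nonabelian Galois cohomology identifies the subset of $H^1(F,H)$ consisting of classes with the same local restrictions as $c(x_i)$ with $\ker\bigl(H^1(F,H_{x_i})\to\prod_v H^1(F_v,H_{x_i})\bigr)$, where $H_{x_i}$ is the $F$-stabilizer of $x_i$, an inner form of $H$. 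Since $H$ and $H_{x_i}$ are inner forms of the same connected reductive group, their Shafarevich--Tate cardinalities agree (via Borovoi's abelianization of nonabelian cohomology), yielding the stated inequality.

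For the equality statement, assume $G$ is simply connected semisimple with $G'(F_\infty)$ noncompact for every almost-simple factor $G'$; the injection above must then be shown to be surjective. A class in $\ker(H^1(F,H)\to\prod_v H^1(F_v,H))$ corresponds to a $G(F)$-orbit of some $y_0\in X(F)$ that coincides locally with the orbit of $x_i$ at every $v$; equivalently $y_0=x_i\cdot\sigma_{\Bbb A}$ for some $\sigma_{\Bbb A}\in G({\Bbb A}_F)$. Strong approximation for such $G$, Theorem 7.12 of \cite{PR94}, implies $G({\Bbb A}_F)=St({\bf X})\cdot G(F)$, because $St({\bf X})$ is an open subgroup of $G({\Bbb A}_F)$ containing $G(F_\infty)$. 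Writing $\sigma_{\Bbb A}=s\cdot g$ with $s\in St({\bf X})$ and $g\in G(F)$, one obtains $y':=y_0\cdot g^{-1}=x_i\cdot s\in x_i St({\bf X})$, integral at every finite place since $s_v\in St({\bf X}(o_{F_v}))$, and in the same $G(F)$-orbit as $y_0$, hence realizing the prescribed class.

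The cohomological steps in the first two paragraphs are essentially formal; the main obstacle is the surjectivity argument, where one must upgrade a rational representative of a given cohomology class to an integral point lying in the prescribed $St({\bf X})$-orbit. This is precisely where strong approximation in $G$ enters, and it is the reason the equality half of the statement requires $G$ simply connected semisimple with a noncompact archimedean factor on each almost-simple component; under weaker hypotheses only the inequality survives.
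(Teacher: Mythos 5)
Your overall architecture matches the paper's: classify $G(F)$-orbits in $X(F)$ by classes in $H^1(F,H)$, observe that points of ${\bf X}(o_F)\cap x_i St({\bf X})$ are locally in the same orbit as $x_i$ at every place, transport via the twisting bijection to $\ker\bigl(H^1(F,H_{x_i})\to\prod_v H^1(F_v,H_{x_i})\bigr)$ where $H_{x_i}$ is the stabilizer of $x_i$, and use strong approximation $G(\Bbb A_F)=St({\bf X})G(F)$ to get equality. The inequality half and the identification $\sharp\Sha(H)=\sharp\Sha(H_{x_i})$ are handled essentially as in the paper.

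There is, however, one genuine gap in your surjectivity argument. You assert that a class $\xi$ in $\ker\bigl(H^1(F,H)\to\prod_v H^1(F_v,H)\bigr)$ ``corresponds to a $G(F)$-orbit of some $y_0\in X(F)$.'' This is not automatic: from the exact sequence $X(F)\xrightarrow{\delta}H^1(F,H)\to H^1(F,G)$, the class $\xi$ is realized by a rational point of $X$ only if its image in $H^1(F,G)$ vanishes. Local triviality of $\xi$ only gives local triviality of that image, so one must invoke the Hasse principle for $H^1$ of the simply connected group $G$ (Kneser--Harder--Chernousov) to conclude it is globally trivial; for a general reductive $G$ with $\Sha(G)\neq 1$ the step fails and no such $y_0$ need exist. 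The paper makes this invocation explicitly (``By the Hasse principle for $G$''). Relatedly, your closing paragraph attributes the role of simple-connectedness entirely to strong approximation; in fact it enters twice, once through the Hasse principle to produce the rational representative $y_0$ and once through strong approximation to move $y_0$ within its $G(F)$-orbit into $x_iSt({\bf X})\cap{\bf X}(o_F)$. With the Hasse principle inserted at that point, your argument is complete and coincides with the paper's.
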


\begin{proof} Since the exact sequence
$$1\rightarrow H(F)\rightarrow G(F) \rightarrow X(F) \xrightarrow{\delta}  H^1(F,H) \rightarrow H^1(F,G)$$ by Galois cohomology (see Chapter I, \S 5.4 Proposition 36 in \cite{Ser}), one has that $\delta(x_i)$ is an $H$-torsor over $F$.
Since the twisting of $H$ by $\delta(x_i)$ in sense of \cite{Ser} is the stabilizer $H_{x_i}$ of $x_i$ in $G$ by the direct computation, one has the following commutative diagram
\[ \begin{CD}
H^1(F, H_{x_i})  @> {\cong}>{\circ [x_i]}>   H^1(F, H)  \\
    @VVV   @VVV  \\
\prod_{v\in \Omega_F} H^1(F_v,H_{x_i}) @>{\cong}>{\circ [x_i]}>  \prod_{v\in \Omega_F}  H^1(F_v,H)
\end{CD} \]
where the neutral element in $H^1(F, H_{x_i})$ will be sent to $\delta(x_i)$ in $H^1(F, H)$ by Chapter I, \S 5.3 Proposition 35 in \cite{Ser}. Therefore
$$\sharp (({\bf X}(o_F)\cap x_i St({\bf X}))/\sim_G) \leq  \sharp (ker (H^1(F, H_{x_i})\rightarrow \prod_{v\in \Omega_F} H^1(F_v, H_{x_i}))) $$
and
$$ \sharp (ker (H^1(F, H)\rightarrow \prod_{v\in \Omega_F} H^1(F_v, H)))=\sharp (ker (H^1(F, H_{x_i})\rightarrow \prod_{v\in \Omega_F} H^1(F_v, H_{x_i}))) $$
by the functoriality of Galois cohomology and the above diagram.

If $G$ is semi-simple and simply connected and $G'(F_\infty)$ is not compact for any simple factor $G'$ of $G$, then $G(\Bbb A_F) =St({\bf X})G(F)$ by the strong approximation for $G$ (see Theorem 7.12 of Chapter 7 in \cite{PR94}). Let $$ \xi \in ker (H^1(F, H_{x_i})\rightarrow \prod_{v\in \Omega_F} H^1(F_v, H_{x_i})).  $$ By the Hasse principle for $G$, one obtains $x\in X(F)$ such that $\delta(x)=\xi$ and $x=x_{i} \cdot g_\Bbb A$ with $g_\Bbb A\in G(\Bbb A_F)$. There are $s_\Bbb A\in St({\bf X})$ and $\sigma\in G(F)$ such that $g_\Bbb A=s_\Bbb A \sigma$. Therefore
$$ x\cdot \sigma^{-1}= x_i \cdot s_\Bbb A \in {\bf X}(o_F)\cap x_i St({\bf X})$$
and $\delta(x\cdot \sigma^{-1}) = \delta(x)= \xi $.
\end{proof}

One can further decompose $${\bf X}(o_F)\cap x_i St({\bf X})  = \bigcup_{j} (y_j^{(i)} G(F) \cap x_i St({\bf X}))= \bigcup_{j} (y_j^{(i)} G(F) \cap y_j^{(i)} St({\bf X}))$$
with $y_j^{(i)} \in {\bf X}(o_F)$. The following proposition is essentially the same as Lemma 4.4.1 (i) in \cite{BR95}, which is originally from \cite{We62}.

\begin{prop} \label{refine}  With the above notation, there is a bijection
$$ (y_j^{(i)} G(F) \cap y_j^{(i)} St({\bf X}))/\Gamma   \ \xrightarrow{\cong}  \  H_{ij}(F) \backslash (H_{ij}(\Bbb A_F)\cap G(F)St({\bf X}))/ (H_{ij}(\Bbb A_F)\cap St({\bf X}))$$ where $H_{ij}$ is the stabilizer of $y_j^{(i)}$.

In particular, if the map $G(F)\xrightarrow{p} X(F)$ in (\ref{induces}) is surjective, then there is a bijection
$$ {\bf X}(o_F)\cap x_i St({\bf X})/\Gamma \ \xrightarrow{\cong}  \  H(F) \backslash (H(\Bbb A_F)\cap G(F)St({\bf X})\sigma_i^{-1})/ (H(\Bbb A_F)\cap \sigma_i St({\bf X})\sigma_i^{-1})$$ where $\sigma_i\in G(F)$ such that $P \cdot \sigma_i=x_i$.

\end{prop}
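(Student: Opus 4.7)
The plan is to translate the orbit-counting problem on the left into a double-coset count on the stabilizer $H_{ij}$ via the standard mechanism going back to Weil \cite{We62}: identify the $G(F)$-orbit of $y_j^{(i)}$ with $H_{ij}(F)\backslash G(F)$, rewrite the intersection condition adelically, and then decompose each representative as $g=h\cdot s$ with $h\in H_{ij}(\Bbb A_F)$ and $s\in St({\bf X})$. Setting $y=y_j^{(i)}$, the stabilizer bijection gives $H_{ij}(F)\backslash G(F)\xrightarrow{\cong} y\cdot G(F)$ via $g\mapsto y\cdot g$. The condition $y\cdot g\in y\cdot St({\bf X})$ is equivalent to $y\cdot g=y\cdot s$ for some $s\in St({\bf X})$, which forces $gs^{-1}\in H_{ij}(\Bbb A_F)$ and hence $g\in G(F)\cap H_{ij}(\Bbb A_F)St({\bf X})$. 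Quotienting by the right $\Gamma$-action then gives
$$(y\cdot G(F)\cap y\cdot St({\bf X}))/\Gamma\;\cong\;H_{ij}(F)\backslash\bigl(G(F)\cap H_{ij}(\Bbb A_F)St({\bf X})\bigr)/\Gamma.$$

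I would next define the candidate bijection $\phi(g):=h$ using any decomposition $g=h\cdot s$; the image $h=gs^{-1}$ lies in $H_{ij}(\Bbb A_F)\cap G(F)St({\bf X})$, the desired target. The verification splits into three well-definedness checks and the usual injectivity/surjectivity: (i) two decompositions $(h_1,s_1),(h_2,s_2)$ of the same $g$ satisfy $h_2^{-1}h_1=s_2s_1^{-1}\in H_{ij}(\Bbb A_F)\cap St({\bf X})$, so the ambiguity in $h$ matches the right quotient group; (ii) replacing $g$ by $h_0 g$ with $h_0\in H_{ij}(F)$ replaces $h$ by $h_0 h$; (iii) replacing $g$ by $g\gamma$ with $\gamma\in\Gamma\subset St({\bf X})$ leaves $h$ unchanged. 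Injectivity comes from the identity $g_1=h_0 g_2(s_2^{-1}k s_1)$ where $s_2^{-1}k s_1\in G(F)\cap St({\bf X})=\Gamma$; surjectivity is immediate by writing any $h$ in the target as $h=g\cdot s$ with $g\in G(F)$ and $s\in St({\bf X})$, which yields $g\in G(F)\cap H_{ij}(\Bbb A_F)St({\bf X})$ with $\phi(g)=h$.

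For the "In particular" statement, surjectivity of $p\colon G(F)\to X(F)$ collapses the secondary index $j$: every point of ${\bf X}(o_F)\cap x_i St({\bf X})$ lies in the single $G(F)$-orbit $P\cdot G(F)=x_i G(F)$, so one may take $y=x_i=P\cdot\sigma_i$ with stabilizer $H_{ij}=\sigma_i^{-1}H\sigma_i$. Inner conjugation $h\mapsto\sigma_i h\sigma_i^{-1}$ is an isomorphism $H_{ij}\xrightarrow{\cong}H$ sending $H_{ij}(\Bbb A_F)\cap St({\bf X})$ to $H(\Bbb A_F)\cap\sigma_i St({\bf X})\sigma_i^{-1}$ and, using the normalization $\sigma_i\in G(F)$ so that $\sigma_i G(F)=G(F)$, sending $H_{ij}(\Bbb A_F)\cap G(F)St({\bf X})$ to $H(\Bbb A_F)\cap G(F)\sigma_i St({\bf X})\sigma_i^{-1}=H(\Bbb A_F)\cap G(F)St({\bf X})\sigma_i^{-1}$, which recovers the stated form. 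The main obstacle here is not conceptual but bookkeeping: the decomposition $g=h\cdot s$ mixes a rational element $g\in G(F)$ with adelic factors $h\in H_{ij}(\Bbb A_F)$ and $s\in St({\bf X})\subset G(\Bbb A_F)$, and one must track the precise ambiguity $H_{ij}(\Bbb A_F)\cap St({\bf X})$ so that the quotient structures on both sides match up exactly.
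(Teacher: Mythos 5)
Your proof is correct and follows essentially the same route as the paper: you use the same assignment $x=y\cdot g=y\cdot s\mapsto gs^{-1}\in H_{ij}(\Bbb A_F)\cap G(F)St({\bf X})$, the same well-definedness and injectivity computation (reducing to $s_2^{-1}ks_1\in G(F)\cap St({\bf X})=\Gamma$), and the same conjugation by $\sigma_i$ for the second assertion. The intermediate identification with $H_{ij}(F)\backslash\bigl(G(F)\cap H_{ij}(\Bbb A_F)St({\bf X})\bigr)/\Gamma$ is just a repackaging of the paper's direct argument.
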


\begin{proof} Since $$x\in y_j^{(i)} G(F) \cap y_j^{(i)} St({\bf X})  \ \ \Leftrightarrow \ \ x=y_j^{(i)}\cdot g=y_j^{(i)} \cdot s_\Bbb A $$ with $g\in G(F)$ and $s_\Bbb A\in St({\bf X})$, one has that $$ g s_\Bbb A^{-1}  \in H_{ij}(\Bbb A_F)\cap G(F)St({\bf X}) . $$ One can verify directly the map
$$ x\mapsto H_{ij}(F) g s_\Bbb A^{-1} (H_{ij}(\Bbb A_F)\cap St({\bf X}))$$ is well-defined and induces the map
$$  (y_j^{(i)} G(F) \cap y_j^{(i)} St({\bf X}))/\Gamma   \xrightarrow{\phi}  H_{ij}(F) \backslash (H_{ij}(\Bbb A_F)\cap G(F)St({\bf X}))/ (H_{ij}(\Bbb A_F)\cap St({\bf X})) $$ which is well-defined as well.

If $$H_{ij}(F) g s_\Bbb A^{-1} (H_{ij}(\Bbb A_F)\cap  St({\bf X}))=H_{ij}(F) g' {s'}_\Bbb A^{-1} (H_{ij}(\Bbb A_F)\cap St({\bf X})),$$ there is $h\in H_{ij}(F)$ and $\xi\in H_{ij}(\Bbb A_F)\cap  St({\bf X})$ such that
$g s_\Bbb A^{-1} =h  g' {s'}_\Bbb A^{-1}  \xi $. Then
$$ {g'}^{-1}h^{-1} g = {s'}_\Bbb A^{-1} \xi  s_\Bbb A \in \Gamma .$$ This implies that $\phi$ is injective. It is clear that $\phi$ is surjective.

Suppose the map $G(F)\xrightarrow{p} X(F)$ in (\ref{induces}) is surjective. Then
$$ y_j^{(i)} G(F) \cap y_j^{(i)} St({\bf X})=X(F) \cap y_j^{(i)} St({\bf X})=  X(F) \cap x_i St({\bf X}) = {\bf X}(o_F)\cap x_i St({\bf X})$$ and the result follows from applying the conjugation isomorphism given by $\sigma_i$.
\end{proof}

\bigskip

\section{mass formulae associated to elements of Brauer groups}\label{sec.mass2}

In this section, we will establish the mass formulae associated the Brauer-Manin obstruction following from \cite{X}. Such mass formulae with characters are initial from \cite{Kn}, \cite{We62} and \cite{SP}. We will keep the same notations as those in Section \ref{sec.notation} and Section \ref{sec.orbit} and  further assume that both $G$ and $H$ have no non-trivial $F$-characters. Then the Tamagawa numbers of $G$ and $H$ will be given by
$$ \tau(H)= \int_{H(F)\backslash H(\Bbb A_F)} \nu_H \ \ \ \text{and} \ \ \ \tau(G) = \int_{G(F)\backslash G(\Bbb A_F)} \lambda_G $$
respectively (see \cite{kot88}). Define $$m({\bf X}\cdot \sigma_\Bbb A ):= \nu_{\infty_F} ((H(F)\cap(\sigma_{\Bbb A}^{-1}St({\bf X})\sigma_{\Bbb A}))\backslash H(F_\infty))$$ and $$M({ \bf X}\cdot \sigma_\Bbb A ) : = \lambda_{\infty_F} ((G(F) \cap (\sigma_{\Bbb A}^{-1} St({\bf X})\sigma_{\Bbb A}))\backslash G(F_\infty)) .$$
Then both $m( {\bf X}\cdot \sigma_\Bbb A )$ and $M( { \bf X}\cdot \sigma_\Bbb A )$ are finite by Theorem 4.17 in \cite{PR94} since both $G$ and $H$ have no non-trivial $F$-characters.

Denote $$[G,{\bf X}]:= G(F)[G(\Bbb A_F), G(\Bbb A_F)]  St({\bf X}) \ \ \ \text{and} \ \ \ s({\bf X}):= [G(\Bbb A_F): [G,{\bf X}]]. $$
By Theorem 5.1 of Chapter
5 in \cite{PR94}, the index $s({\bf X})$ is finite.

\begin{definition}\label{mass}
For any given $\sigma_\Bbb A\in G(\Bbb A_F)$, we define
$$R({\bf X} \cdot \sigma_\Bbb A): = \sum_{\gamma_\Bbb A} M({\bf X}\cdot \sigma_\Bbb A \gamma_\Bbb A^{-1} )$$
where $\gamma_\Bbb A$ runs over the double cosets $G(F) \backslash [G, {\bf X}]/\sigma_\Bbb A^{-1} St({\bf X}) \sigma_\Bbb A$ and
 $$r({\bf X}\cdot \sigma_\Bbb A ): = \sum_{\gamma_\Bbb A} m({\bf X}\cdot \sigma_\Bbb A\gamma_\Bbb A^{-1})$$ where $\gamma_\Bbb A$ runs over the double coset decomposition $$ H(F) \backslash p^{-1}( {\bf X} \cdot \sigma_\Bbb A ) \cap [G, {\bf X}] /\sigma_{\Bbb A}^{-1}St({\bf X})\sigma_{\Bbb A} .  $$
\end{definition}

\begin{lem}\label{stand} For any $\sigma_\Bbb A \in G(\Bbb A_F)$, one has
$$ R({\bf X} \cdot 1_\Bbb A)=R({\bf X} \cdot \sigma_\Bbb A)= \frac{\tau(G)}{s({\bf X})} \prod_{v<\infty_F} \lambda_v (St({\bf X}(o_{F_v})) )^{-1} . $$

\end{lem}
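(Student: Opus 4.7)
The plan is to compute the Tamagawa volume $\lambda_G(G(F)\backslash [G,{\bf X}])$ in two different ways and equate them. First, since $[G(\Bbb A_F),G(\Bbb A_F)]$ is normal in $G(\Bbb A_F)$ with abelian quotient, the set $[G,{\bf X}] = G(F)[G(\Bbb A_F),G(\Bbb A_F)]St({\bf X})$ is a normal subgroup of $G(\Bbb A_F)$ of index $s({\bf X})$. Because $G(F)\subseteq [G,{\bf X}]$, a transversal for $G(\Bbb A_F)/[G,{\bf X}]$ partitions $G(F)\backslash G(\Bbb A_F)$ into $s({\bf X})$ right-translates of $G(F)\backslash [G,{\bf X}]$, each of equal measure by invariance of $\lambda_G$; hence $\lambda_G(G(F)\backslash [G,{\bf X}])=\tau(G)/s({\bf X})$.

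For the second computation, I would fix a (finite, by a class-number argument analogous to Corollary~\ref{adelorb}) set of double-coset representatives $\gamma_\Bbb A \in G(F)\backslash [G,{\bf X}]/(\sigma_\Bbb A^{-1}St({\bf X})\sigma_\Bbb A)$, giving a disjoint decomposition $G(F)\backslash [G,{\bf X}] = \bigsqcup_{\gamma_\Bbb A} G(F)\backslash G(F)\gamma_\Bbb A(\sigma_\Bbb A^{-1}St({\bf X})\sigma_\Bbb A)$. Writing $\tau_\Bbb A := \sigma_\Bbb A\gamma_\Bbb A^{-1}$ and $V := \tau_\Bbb A^{-1}St({\bf X})\tau_\Bbb A$ (an open subgroup of $G(\Bbb A_F)$), right-multiplication by $\gamma_\Bbb A^{-1}$ is a measure-preserving bijection from each piece to $\Gamma_V\backslash V$, where $\Gamma_V := G(F)\cap V$. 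Since $St({\bf X}(o_{F_v}))=G(F_v)$ for $v\in\infty_F$, one has $V=G(F_\infty)\times\prod_{v<\infty_F}\tau_v^{-1}St({\bf X}(o_{F_v}))\tau_v$; moreover $\Gamma_V$ projects injectively (via the diagonal embedding) to a lattice in $G(F_\infty)$ by Theorem~4.17 of \cite{PR94}, using the hypothesis that $G$ has no non-trivial $F$-characters. Factoring the Tamagawa measure and invoking conjugation-invariance of $\lambda_v$ at each finite place then yields
$$\lambda_G(\Gamma_V\backslash V) = M({\bf X}\cdot\sigma_\Bbb A\gamma_\Bbb A^{-1})\cdot\prod_{v<\infty_F}\lambda_v(St({\bf X}(o_{F_v}))).$$

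Summing over $\gamma_\Bbb A$ produces $R({\bf X}\cdot\sigma_\Bbb A)\cdot\prod_{v<\infty_F}\lambda_v(St({\bf X}(o_{F_v})))$, and equating with $\tau(G)/s({\bf X})$ yields the claimed formula; the manifest independence of the right-hand side from $\sigma_\Bbb A$ gives $R({\bf X}\cdot 1_\Bbb A)=R({\bf X}\cdot\sigma_\Bbb A)$. The main technical obstacle is justifying the factorisation of $\lambda_G$ on $\Gamma_V\backslash V$, i.e.\ that $\Gamma_V$ really is a lattice in $G(F_\infty)$ once the finite-place compact factor is stripped off. This is exactly where the no-characters hypothesis on $G$ (and $H$) and the reduction-theoretic input from \cite{PR94} are essential; without them neither $\tau(G)$ nor the individual terms $M({\bf X}\cdot\sigma_\Bbb A\gamma_\Bbb A^{-1})$ would be well-defined finite quantities.
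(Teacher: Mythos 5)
Your proposal is correct and follows essentially the same route as the paper: the coset decomposition $G(\Bbb A_F)=\bigcup_{i=1}^{s({\bf X})}[G,{\bf X}]\tau_i$ gives $\lambda_G(G(F)\backslash[G,{\bf X}])=\tau(G)/s({\bf X})$, and the double-coset decomposition of $[G,{\bf X}]$ by $\sigma_\Bbb A^{-1}St({\bf X})\sigma_\Bbb A$ together with the fundamental-domain factorisation (Theorem 5.5, Ch.~5 of \cite{PR94}) gives $R({\bf X}\cdot\sigma_\Bbb A)\prod_{v<\infty_F}\lambda_v(St({\bf X}(o_{F_v})))$ for the same volume. You have correctly isolated the one genuine technical input, namely that $\Gamma_V$ is a lattice in $G(F_\infty)$ with finite covolume under the no-characters hypothesis, which is exactly what the paper cites from \cite{PR94}.
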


\begin{proof}  Write the coset decomposition

$$ G(\Bbb A_F) = \bigcup_{i=1}^{s({\bf X})}  [G,{\bf X}]\tau_i .$$ Then

$$   \tau(G) = \sum_{i=1}^{s({\bf X})} \int_{G(F)\backslash [G,{\bf X}]\tau_i} \lambda_G
 =  s({\bf X}) \int_{G(F)\backslash [G,{\bf X}]} \lambda_G .  $$
Consider the double coset decomposition $$ [G,{\bf X}]=\bigcup G(F) \gamma_\Bbb A \sigma_\Bbb A^{-1} St({\bf X})\sigma_\Bbb A. $$ Then

$$\int_{G(F)\backslash [G,{\bf X}]} \lambda_G =\sum_{\gamma_\Bbb A} \int_{G(F)\backslash G(F) \gamma_\Bbb A \sigma_\Bbb A^{-1} St({\bf X}) \sigma_\Bbb A\gamma_\Bbb A^{-1}} \lambda_G  . $$ Since one has the following fundamental domain

$$ \aligned &  G(F)\backslash G(F) \gamma_\Bbb A \sigma_\Bbb A^{-1} St({\bf X}) \sigma_\Bbb A \gamma_\Bbb A^{-1}  \\  \cong & ((G(F) \cap ( \gamma_\Bbb A \sigma_{\Bbb A}^{-1} St({\bf X})\sigma_{\Bbb A}\gamma_\Bbb A^{-1} )) \backslash G(F_\infty)) \times (\prod_{v<\infty_F} \gamma_v \sigma_v^{-1}  St({\bf X}(o_{F_v})) \sigma_v \gamma_v^{-1})  \endaligned  $$ where $\sigma_\Bbb A=(\sigma_v)$ and $\gamma_\Bbb A= (\gamma_v)$,  one obtains that
$$\int_{G(F)\backslash G(F) \gamma_\Bbb A \sigma_\Bbb A St({\bf X}) \sigma_\Bbb A^{-1} \gamma_\Bbb A^{-1}} \lambda_G =M({\bf X}\cdot \sigma_\Bbb A \gamma_\Bbb A^{-1})\prod_{v< \infty_F} \lambda_v(St({\bf X}(o_{F_v})))$$ by Theorem 5.5 in Chapter 5 of \cite{PR94} and the assumption that $G$ has no non-trivial $F$-characters. Combining the above together, one completes the proof.
\end{proof}

The following proposition gives the arithmetic interpretation about $r({\bf X}\cdot 1_\Bbb A )$.

\begin{prop} \label{arith} If $[G,{\bf X}]= G(F)St({\bf X})$ and the map $G(F)\xrightarrow{p} X(F)$ in (\ref{induces}) is surjective, then  $$ r({\bf X} \cdot 1_{\Bbb A})= \sum_i  m({\bf X} \cdot \tau_i^{-1}) $$
where $$ {\bf X}(o_F) = \bigcup_{i} \  y_i\cdot \Gamma  \ \ \ \text{and} \ \ \ P \cdot \tau_i =y_i$$ for  $\tau_i\in G(F)$.
\end{prop}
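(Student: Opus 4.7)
My plan is to construct a natural bijection
$$ H(F)\backslash p^{-1}({\bf X}\cdot 1_\Bbb A)\cap [G,{\bf X}]/St({\bf X}) \ \xrightarrow{\cong}\ {\bf X}(o_F)/\Gamma $$
and show that, under this bijection, the $\Gamma$-orbit $y_i\Gamma$ corresponds to the double coset of the diagonally embedded $\tau_i\in G(F)\subset G(\Bbb A_F)$. Once that is established, the defining sum for $r({\bf X}\cdot 1_\Bbb A)$ has $\gamma_\Bbb A=\tau_i$ as a complete system of representatives, and each contributes $m({\bf X}\cdot \tau_i^{-1})$, which is the desired formula.

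The main map goes as follows. The hypothesis $[G,{\bf X}]=G(F)St({\bf X})$ lets me write any $g_\Bbb A\in [G,{\bf X}]$ as $g_\Bbb A=g\cdot s_\Bbb A$ with $g\in G(F)$ and $s_\Bbb A\in St({\bf X})$. If in addition $g_\Bbb A\in p^{-1}({\bf X}\cdot 1_\Bbb A)$, then $P\cdot g=(P\cdot g_\Bbb A)\cdot s_\Bbb A^{-1}$ lies in $({\bf X}\cdot 1_\Bbb A)\cap X(F)$, which equals ${\bf X}(o_F)$ by definition of the integral points. I send $g_\Bbb A$ to $[P\cdot g]\in {\bf X}(o_F)/\Gamma$. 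Two decompositions of $g_\Bbb A$ differ by an element of $G(F)\cap St({\bf X})=\Gamma$, so the class is independent of the decomposition; left multiplication by $H(F)$ leaves $P\cdot g$ fixed, and right multiplication by $St({\bf X})$ just alters $s_\Bbb A$, so the map descends to double cosets.

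Surjectivity uses the hypothesis that $p\colon G(F)\to X(F)$ is onto: given $y\in{\bf X}(o_F)$, pick $\tau\in G(F)$ with $P\cdot \tau=y$; then $\tau\in G(F)\subset [G,{\bf X}]$ and $P\cdot \tau=y\in {\bf X}\cdot 1_\Bbb A$, so $\tau\in p^{-1}({\bf X}\cdot 1_\Bbb A)\cap[G,{\bf X}]$ maps to $[y]$. For injectivity, suppose $g_\Bbb A=g\cdot s_\Bbb A$ and $g'_\Bbb A=g'\cdot s'_\Bbb A$ are sent to the same orbit, so $P\cdot g=P\cdot g'\cdot \gamma$ for some $\gamma\in\Gamma$. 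Then $g'\gamma=h\cdot g$ for some $h\in H(F)$, hence $g'_\Bbb A=h\cdot g_\Bbb A\cdot (s_\Bbb A^{-1}\gamma^{-1}s'_\Bbb A)$ with $s_\Bbb A^{-1}\gamma^{-1}s'_\Bbb A\in St({\bf X})$, placing $g_\Bbb A$ and $g'_\Bbb A$ in the same double coset.

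The only step requiring care, and the one I would write out most carefully, is the identification $({\bf X}\cdot 1_\Bbb A)\cap X(F)={\bf X}(o_F)$ which is exactly what makes the $F$-component $g$ land integrally; after that, the matching of the $i$-th orbit $y_i\Gamma$ with the double coset of $\tau_i$ is immediate, and since $m({\bf X}\cdot \gamma_\Bbb A^{-1})$ depends only on the double coset class of $\gamma_\Bbb A$, the sum reorganises precisely into $\sum_i m({\bf X}\cdot \tau_i^{-1})$.
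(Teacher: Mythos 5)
Your proof is correct and is essentially the paper's argument: the paper establishes the same bijection between the index set $H(F)\backslash p^{-1}({\bf X}\cdot 1_\Bbb A)\cap[G,{\bf X}]/St({\bf X})$ and ${\bf X}(o_F)/\Gamma$ by fibering over the $H(\Bbb A_F)$-double cosets and invoking Proposition \ref{equiv}(2), Lemma \ref{bij} and Proposition \ref{refine}, whose proofs contain exactly your key steps --- the decomposition $g_\Bbb A=g\cdot s_\Bbb A$ with $g\in G(F)$, $s_\Bbb A\in St({\bf X})$ furnished by $[G,{\bf X}]=G(F)St({\bf X})$, and the identification $X(F)\cap({\bf X}\cdot 1_\Bbb A)={\bf X}(o_F)$. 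Your version merely assembles these ingredients into a single direct bijection (together with the correct observation that $m({\bf X}\cdot\gamma_\Bbb A^{-1})$ depends only on the double coset $H(F)\gamma_\Bbb A St({\bf X})$), which is fine.
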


\begin{proof} Since the natural map
$$ H(F) \backslash p^{-1} ({\bf X} \cdot 1_{\Bbb A}) \cap [G, {\bf X}]/ St({\bf X}) \rightarrow
H(\Bbb A_F) \backslash p^{-1} ({\bf X} \cdot 1_{\Bbb A}) \cap  (H(\Bbb A_F) [G, {\bf X}])/ St({\bf X}) $$ is surjective and the fiber of an element $$H(\Bbb A_F) \tau St({\bf X}) \ \ \text{ with $\tau\in G(F)$ and $P\cdot \tau\in {\bf X}(o_F)$} $$  is given by $$ \aligned & H(F) \backslash (H(\Bbb A_F) \tau St({\bf X})  \cap G(F) St({\bf X}) ) /St({\bf X})  \\
\cong &  H(F) \backslash (H(\Bbb A_F)\cap G(F)St({\bf X})\tau^{-1})/ (H(\Bbb A_F)\cap \tau St({\bf X})\tau^{-1}) \endaligned $$ by Lemma \ref{bij}, the result follows from Prop.\ref{equiv}, 2) and Prop.\ref{refine}.
\end{proof}

The assumption that $[G,{\bf X}]= G(F)St({\bf X})$ will be satisfied if any simple factor of semi-simple part of $G$ is not compact by Theorem 7.28 in \cite{Dem}.  Let $$h({\bf X}):=[H(\Bbb A_F): H(\Bbb A_F) \cap [G,{\bf X}]].$$

\begin{prop}\label{explicit} If $$ p^{-1}( {\bf X} \cdot  \sigma_\Bbb A ) = \bigcup_{u_\Bbb A}  H(\Bbb A_F) u_\Bbb A (\sigma_{\Bbb A}^{-1} St({\bf X})\sigma_{\Bbb A}) $$
for $\sigma_\Bbb A\in G(\Bbb A_F)$, then
$$r({\bf X} \cdot \sigma_\Bbb A )= \frac{\tau(H)}{h({\bf X})} \sum_{u_\Bbb A} (\prod_{v<\infty_F} \nu_v  (H(F_v) \cap (u_v \sigma_v^{-1}  St({\bf X}(o_{F_v})) \sigma_v u_v^{-1}))^{-1}) $$
where $u_\Bbb A=(u_v)$ runs over the above double coset decomposition satisfying $u_\Bbb A\in H(\Bbb A_F)[G,{\bf X}]$.
\end{prop}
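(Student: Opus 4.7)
The plan is to adapt the strategy of Lemma~\ref{stand}: we interpret $r({\bf X}\cdot \sigma_\Bbb A)$ as the total $\nu_H$-mass of $H(F)\backslash (H(\Bbb A_F)\cap [G,{\bf X}])$ divided up by the finite-place volumes $\prod_{v<\infty_F}\nu_v(L_v(u_v))$, where $L_v(u_v):= H(F_v)\cap (u_v \sigma_v^{-1} St({\bf X}(o_{F_v}))\sigma_v u_v^{-1})$. As a preliminary observation, since $H(F)\subset G(F)\subset [G,{\bf X}]$ and $h({\bf X})$ is the index of $H(\Bbb A_F)\cap [G,{\bf X}]$ in $H(\Bbb A_F)$, right-invariance of $\nu_H$ yields
$$\nu_H(H(F)\backslash (H(\Bbb A_F)\cap [G,{\bf X}])) = \tau(H)/h({\bf X}).$$

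First, I would restrict the hypothesised decomposition of $p^{-1}({\bf X}\cdot \sigma_\Bbb A)$ to $[G,{\bf X}]$. Because $[G,{\bf X}]$ is normal in $G(\Bbb A_F)$ (it contains the commutator subgroup), $\sigma_\Bbb A^{-1}St({\bf X})\sigma_\Bbb A\subset [G,{\bf X}]$, and $H(\Bbb A_F)u_\Bbb A (\sigma_\Bbb A^{-1}St({\bf X})\sigma_\Bbb A)$ meets $[G,{\bf X}]$ precisely when $u_\Bbb A\in H(\Bbb A_F)[G,{\bf X}]$. Choosing each such representative inside $[G,{\bf X}]$, one obtains the disjoint decomposition
$$p^{-1}({\bf X}\cdot \sigma_\Bbb A)\cap [G,{\bf X}] = \bigsqcup_{u_\Bbb A} (H(\Bbb A_F)\cap [G,{\bf X}])\,u_\Bbb A\,(\sigma_\Bbb A^{-1}St({\bf X})\sigma_\Bbb A).$$

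Next, for each such $u_\Bbb A$ the argument of Lemma~\ref{bij} supplies a bijection $H(F)\backslash (H(\Bbb A_F)\cap [G,{\bf X}]) u_\Bbb A (\sigma_\Bbb A^{-1} St({\bf X})\sigma_\Bbb A)/\sigma_\Bbb A^{-1} St({\bf X})\sigma_\Bbb A \;\cong\; H(F)\backslash (H(\Bbb A_F)\cap [G,{\bf X}])/L(u_\Bbb A)$ sending $\gamma_\Bbb A=hu_\Bbb A$ to $h$, where $L(u_\Bbb A):= H(\Bbb A_F)\cap u_\Bbb A \sigma_\Bbb A^{-1}St({\bf X})\sigma_\Bbb A u_\Bbb A^{-1}$. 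Using $St({\bf X}(o_{F_v}))=G(F_v)$ at infinite places, one sees $L(u_\Bbb A)=H(F_\infty)\times \prod_{v<\infty_F}L_v(u_v)$, and the same triviality at infinity yields $H(F)\cap \gamma_\Bbb A\sigma_\Bbb A^{-1}St({\bf X})\sigma_\Bbb A\gamma_\Bbb A^{-1} = H(F)\cap hL(u_\Bbb A)h^{-1}$, so
$$m({\bf X}\cdot \sigma_\Bbb A\gamma_\Bbb A^{-1}) = \nu_{\infty_F}((H(F)\cap hL(u_\Bbb A)h^{-1})\backslash H(F_\infty)).$$
Unimodularity of $\nu_v$ combined with the fundamental-domain calculation of Lemma~\ref{stand} then gives $\nu_H(H(F)\backslash H(F)hL(u_\Bbb A))= m({\bf X}\cdot \sigma_\Bbb A\gamma_\Bbb A^{-1})\prod_{v<\infty_F}\nu_v(L_v(u_v))$.

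Finally, for each fixed $u_\Bbb A$ the cosets $H(F)hL(u_\Bbb A)$ partition $H(\Bbb A_F)\cap [G,{\bf X}]$ modulo $H(F)$, so summing over $h$ produces $\tau(H)/h({\bf X})$ on the volume side; dividing by the finite-place product and summing over $u_\Bbb A$ yields the stated formula. The main obstacle is bookkeeping: one must verify that the restricted partition remains disjoint, that the stabilisers appearing in $m$ match those coming from $L(u_\Bbb A)$, and that representatives $u_\Bbb A\notin H(\Bbb A_F)[G,{\bf X}]$ contribute nothing. All three reduce to the normality of $[G,{\bf X}]$ in $G(\Bbb A_F)$ together with the fact that $St({\bf X}(o_{F_v}))$ is the full group $G(F_v)$ at each infinite $v$.
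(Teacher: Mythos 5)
Your argument is correct and is essentially the paper's own proof: both split $r({\bf X}\cdot\sigma_\Bbb A)$ into the contributions of the $H(\Bbb A_F)$-pieces, pass via Lemma~\ref{bij} to double cosets of $H(F)$ in $H(\Bbb A_F)\cap[G,{\bf X}]$ modulo $L(u_\Bbb A)$, evaluate each $m(\cdot)$ by the product fundamental domain as a $\nu_H$-volume divided by $\prod_{v<\infty_F}\nu_v(L_v(u_v))$, and sum to get $\tau(H)/h({\bf X})$. The only (harmless) difference is bookkeeping: you normalize the representatives $u_\Bbb A$ to lie in $[G,{\bf X}]$ and work with $H(\Bbb A_F)\cap[G,{\bf X}]$, whereas the paper keeps $u_\Bbb A$ as given and works with $H(\Bbb A_F)\cap u_\Bbb A^{-1}[G,{\bf X}]$; these agree by normality of $[G,{\bf X}]$ and unimodularity of $\nu_H$.
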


\begin{proof} The contribution of each piece $H(\Bbb A_F) u_\Bbb A (\sigma_{\Bbb A}^{-1} St({\bf X})\sigma_{\Bbb A})$ to $r({\bf X} \cdot \sigma_\Bbb A)$ is given by
$$g:=\sum_{\gamma_\Bbb A} m({\bf X}\cdot \sigma_\Bbb A\gamma_\Bbb A^{-1})$$ where $\gamma_\Bbb A$ runs over the double coset decomposition $$ H(F) \backslash H(\Bbb A_F) u_\Bbb A (\sigma_{\Bbb A}^{-1} St({\bf X})\sigma_{\Bbb A}) /\sigma_{\Bbb A}^{-1}St({\bf X})\sigma_{\Bbb A} \ \ \ \text{and} \ \ \ \gamma_\Bbb A\in [G,{\bf X}] . $$
By Lemma \ref{bij}, one has
$$g=\sum_{\gamma_\Bbb A} m({\bf X}\cdot \sigma_\Bbb A u_\Bbb A^{-1} \gamma_\Bbb A^{-1}) $$ where $\gamma_\Bbb A$ runs over the double coset decomposition
 $$H(F) \backslash (H(\Bbb A_F)\cap u_\Bbb A^{-1}[G,{\bf X}]) / (H(\Bbb A_F) \cap ((u_\Bbb A\sigma_{\Bbb A}^{-1})St({\bf X})(u_\Bbb A\sigma_{\Bbb A}^{-1})^{-1})) $$ and
 $$ g \neq 0 \ \ \ \Leftrightarrow \ \ \ u_\Bbb A \in H(\Bbb A_F) [G, {\bf X}] . $$

For any $\gamma_\Bbb A\in H(\Bbb A_F)$, one has the following fundamental domain

$$ \aligned &  H(F)\backslash H(F) (H(\Bbb A_F)\cap  (\gamma_\Bbb A u_\Bbb A \sigma_\Bbb A^{-1} St({\bf X}) \sigma_\Bbb A u_\Bbb A^{-1} \gamma_\Bbb A^{-1}) ) \\  \cong & ((H(F) \cap ( \gamma_\Bbb A u_\Bbb A  \sigma_{\Bbb A}^{-1} St({\bf X})\sigma_{\Bbb A} u_\Bbb A^{-1} \gamma_\Bbb A^{-1} )) \backslash H(F_\infty)) \\
& \times  \prod_{v<\infty_F} (H(F_v) \cap (\gamma_v u_v \sigma_v^{-1}  St({\bf X}(o_{F_v})) \sigma_v u_v^{-1} \gamma_v^{-1}))  \endaligned  $$ where $\sigma_\Bbb A=(\sigma_v)$, $u_\Bbb A=(u_v)$ and $\gamma_\Bbb A= (\gamma_v)$. Then
$$ \aligned   m({\bf X}\cdot \sigma_\Bbb A u_\Bbb A^{-1} \gamma_\Bbb A^{-1})= & \nu_H( H(F)\backslash H(F) (H(\Bbb A_F)\cap  (\gamma_\Bbb A u_\Bbb A \sigma_\Bbb A^{-1} St({\bf X}) \sigma_\Bbb A u_\Bbb A^{-1} \gamma_\Bbb A^{-1}) ) \\
& \cdot \prod_{v<\infty_F} \nu_v  (H(F_v) \cap (\gamma_v u_v \sigma_v^{-1}  St({\bf X}(o_{F_v})) \sigma_v u_v^{-1} \gamma_v^{-1}))^{-1} \\
= & \nu_H (H(F)\backslash H(F) \gamma_\Bbb A (H(\Bbb A_F)\cap  ( u_\Bbb A \sigma_\Bbb A^{-1} St({\bf X}) \sigma_\Bbb A u_\Bbb A^{-1}) ) \\
& \cdot \prod_{v<\infty_F} \nu_v  (H(F_v) \cap (u_v \sigma_v^{-1}  St({\bf X}(o_{F_v})) \sigma_v u_v^{-1}))^{-1}.
\endaligned  $$
Therefore $$ \aligned g=& \nu_H (H(F)\backslash (H(\Bbb A_F)\cap u_\Bbb A^{-1}[G,{\bf X}])) \prod_{v<\infty_F} \nu_v  (H(F_v) \cap (u_v \sigma_v^{-1}  St({\bf X}(o_{F_v})) \sigma_v u_v^{-1}))^{-1} \\ = & \frac{\tau(H)}{h({\bf X})} \prod_{v<\infty_F} \nu_v  (H(F_v) \cap (u_v \sigma_v^{-1}  St({\bf X}(o_{F_v})) \sigma_v u_v^{-1}))^{-1}   \endaligned $$ and the proof is complete.

\end{proof}

\begin{cor} \label{weldefine} For any $\sigma_\Bbb A\in G(\Bbb A_F)$ and $\tau_\Bbb A\in H(\Bbb A_F) [G, {\bf X}]$, one has $$r({\bf X} \cdot \sigma_\Bbb A)=r({\bf X} \cdot \sigma_\Bbb A \tau_\Bbb A) . $$
\end{cor}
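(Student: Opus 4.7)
The plan is to reduce the claim to the explicit formula of Proposition~\ref{explicit} and to exhibit a bijection between the parameter sets of the two sums, under which the local factors match term by term.

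First I would observe that the map $p$ is right $G$-equivariant, so $p^{-1}({\bf X}\cdot \sigma_\Bbb A \tau_\Bbb A)=p^{-1}({\bf X}\cdot \sigma_\Bbb A)\cdot\tau_\Bbb A$. Right multiplication by $\tau_\Bbb A$ then induces a bijection
$$ H(\Bbb A_F) \backslash p^{-1}({\bf X}\cdot \sigma_\Bbb A)/\sigma_\Bbb A^{-1} St({\bf X}) \sigma_\Bbb A \xrightarrow{\cong} H(\Bbb A_F) \backslash p^{-1}({\bf X}\cdot \sigma_\Bbb A \tau_\Bbb A)/(\sigma_\Bbb A \tau_\Bbb A)^{-1} St({\bf X})(\sigma_\Bbb A \tau_\Bbb A) $$
sending $[u_\Bbb A]$ to $[u_\Bbb A\tau_\Bbb A]$, compatibly with the conjugation of the right-acting subgroup.

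Next I would verify that the constraint $u_\Bbb A\in H(\Bbb A_F)[G,{\bf X}]$ appearing in Proposition~\ref{explicit} is preserved under this bijection. This reduces to showing that $H(\Bbb A_F)[G,{\bf X}]$ is a subgroup of $G(\Bbb A_F)$. Since $[G,{\bf X}]$ contains the commutator subgroup $[G(\Bbb A_F),G(\Bbb A_F)]$, it is the preimage of its image in the abelianization $G(\Bbb A_F)/[G(\Bbb A_F),G(\Bbb A_F)]$, and that image is the product of the images of $G(F)$ and $St({\bf X})$, hence a subgroup of an abelian group. Thus $[G,{\bf X}]$ is normal in $G(\Bbb A_F)$, $H(\Bbb A_F)[G,{\bf X}]$ is a subgroup containing $\tau_\Bbb A$, and the condition $u_\Bbb A\in H(\Bbb A_F)[G,{\bf X}]$ is equivalent to $u_\Bbb A\tau_\Bbb A\in H(\Bbb A_F)[G,{\bf X}]$.

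Finally I would apply Proposition~\ref{explicit} to both sides. With $u'_\Bbb A:=u_\Bbb A\tau_\Bbb A$ and base $\sigma_\Bbb A\tau_\Bbb A$, the identity $u'_v(\sigma_v\tau_v)^{-1}=u_v\sigma_v^{-1}$ at each finite place implies
$$\nu_v\bigl(H(F_v)\cap u'_v(\sigma_v\tau_v)^{-1} St({\bf X}(o_{F_v}))(\sigma_v\tau_v)(u'_v)^{-1}\bigr)^{-1}=\nu_v\bigl(H(F_v)\cap u_v\sigma_v^{-1} St({\bf X}(o_{F_v}))\sigma_v u_v^{-1}\bigr)^{-1}.$$
Since the prefactor $\tau(H)/h({\bf X})$ is independent of $\sigma_\Bbb A$, summing over the bijection of the first step yields $r({\bf X}\cdot\sigma_\Bbb A)=r({\bf X}\cdot\sigma_\Bbb A\tau_\Bbb A)$. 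The main obstacle I expect is the subgroup property of $H(\Bbb A_F)[G,{\bf X}]$; once this is established the rest is essentially a change of variables in the explicit sum.
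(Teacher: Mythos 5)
Your proposal is correct and follows essentially the same route as the paper: translate the double coset decomposition of $p^{-1}({\bf X}\cdot\sigma_\Bbb A)$ on the right by $\tau_\Bbb A$, observe that the condition $u_\Bbb A\in H(\Bbb A_F)[G,{\bf X}]$ is preserved, and apply Proposition~\ref{explicit} with $u_\Bbb A\tau_\Bbb A$ and $\sigma_\Bbb A\tau_\Bbb A$ in place of $u_\Bbb A$ and $\sigma_\Bbb A$. Your justification that $H(\Bbb A_F)[G,{\bf X}]$ is a subgroup (via normality of $[G,{\bf X}]$, which contains the commutator subgroup) is a detail the paper leaves implicit, and it is correct.
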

\begin{proof} If
 $$ p^{-1}( {\bf X} \cdot  \sigma_\Bbb A ) = \bigcup_{u_\Bbb A}  H(\Bbb A_F) u_\Bbb A (\sigma_{\Bbb A}^{-1} St({\bf X})\sigma_{\Bbb A}), $$
then
$$ p^{-1}( {\bf X} \cdot  \sigma_\Bbb A \tau_\Bbb A ) = \bigcup H(\Bbb A_F) u_\Bbb A \tau_\Bbb A ((\sigma_{\Bbb A}\tau_\Bbb A)^{-1} St({\bf X})\sigma_{\Bbb A}\tau_\Bbb A). $$ Since $$u_\Bbb A\in  H(\Bbb A_F)[G,{\bf X}] \ \ \ \Leftrightarrow \ \ \  u_\Bbb A \tau_\Bbb A \in H(\Bbb A_F)[G,{\bf X}] , $$
one obtains the result by replacing $u_\Bbb A$ with $u_\Bbb A \tau_\Bbb A$ and $\sigma_\Bbb A$ with $\sigma_\Bbb A \tau_\Bbb A$ in Prop. \ref{explicit}.
\end{proof}

Let

\begin{equation} \label{chara-brau}
\chi: \ \ \ G(\Bbb A_F)/H(\Bbb A_F)[G,{\bf X}] \longrightarrow \Bbb C^\times
\end{equation}
be a character. Then $$\chi=\prod_{v\in \Omega_F} \chi_v$$ where $\chi_v$ is induced by
$$ \chi_v: G(F_v) \rightarrow  G(\Bbb A_F)/H(\Bbb A_F)[G,{\bf X}] \xrightarrow{\chi} \Bbb C^\times $$ for any $v\in \Omega_F$ and $\chi_v=1$ for $v\in \infty_F$. Moreover, each $\chi_v$ also induces the locally constant function $\overline{\chi}_v$ on $X(F_v)$ by setting
$$\overline{\chi}_v (x_v) = \chi_v (p^{-1}(x_v))$$
for any $x_v\in X(F_v)$, where $p$ is the map induced by (\ref{induces}). This $\overline{\chi}_v$ is well-defined since $\chi_v$ is trivial over $H(F_v)$.

Define
$$N_v({\bf X}, \overline{\chi}_v) = \int_{{\bf X}(o_{F_v})} \overline{\chi}_v d_v $$
for $v< \infty_F$. One can establish the mass formula associated to $\chi$.

\begin{thm}\label{premain} For any $\chi$ in (\ref{chara-brau}), one has
$$\frac{1}{R({\bf X} \cdot 1_\Bbb A)}\sum_{\sigma_\Bbb A \in G(\Bbb A_F)/H(\Bbb A_F)[G,{\bf X}]} \chi(\sigma_\Bbb A) \cdot r({\bf X} \cdot \sigma_\Bbb A^{-1})= \frac{\tau(H) s({\bf X})}{\tau(G) h({\bf X})} \prod_{v<\infty_F} N_v ({\bf X},\overline{\chi}_v) .$$
\end{thm}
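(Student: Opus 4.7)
The plan is to substitute the explicit formula from Proposition \ref{explicit} into the left hand side, interchange the two summations, and recognise the resulting adelic sum as an Euler product whose local factors are essentially the integrals $N_v({\bf X},\overline{\chi}_v)$; the normalisation by $R({\bf X}\cdot 1_{\Bbb A})^{-1}$ will then absorb the residual local volumes by Lemma \ref{stand}.

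Write $Y := p^{-1}({\bf X}\cdot 1_{\Bbb A})$, so that $p^{-1}({\bf X}\cdot\sigma_{\Bbb A}^{-1}) = Y\sigma_{\Bbb A}^{-1}$. Applying Proposition \ref{explicit} with $\sigma_{\Bbb A}^{-1}$ in place of $\sigma_{\Bbb A}$ expresses $r({\bf X}\cdot\sigma_{\Bbb A}^{-1})$ as $\tau(H)/h({\bf X})$ times a sum over representatives $u_{\Bbb A}\in H(\Bbb A_F)[G,{\bf X}]$ of $H(\Bbb A_F)\backslash Y\sigma_{\Bbb A}^{-1}/(\sigma_{\Bbb A} St({\bf X})\sigma_{\Bbb A}^{-1})$. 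The change of variable $w_{\Bbb A} := u_{\Bbb A}\sigma_{\Bbb A}$ rewrites this index set as the $\sigma_{\Bbb A}$-independent set of representatives of $H(\Bbb A_F)\backslash Y/St({\bf X})$, while turning the condition $u_{\Bbb A}\in H(\Bbb A_F)[G,{\bf X}]$ into $\sigma_{\Bbb A}\in w_{\Bbb A} H(\Bbb A_F)[G,{\bf X}]$. This is the key observation: for each fixed $w_{\Bbb A}$ exactly one coset of $\sigma_{\Bbb A}$ modulo $H(\Bbb A_F)[G,{\bf X}]$ contributes, and on that coset $\chi(\sigma_{\Bbb A}) = \chi(w_{\Bbb A})$ because $\chi$ factors through the abelian quotient. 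Interchanging the sums therefore collapses the $\sigma_{\Bbb A}$-summation entirely and yields
\begin{equation*}
\sum_{\sigma_{\Bbb A}}\chi(\sigma_{\Bbb A})\,r({\bf X}\cdot\sigma_{\Bbb A}^{-1}) = \frac{\tau(H)}{h({\bf X})}\sum_{w_{\Bbb A}\in H(\Bbb A_F)\backslash Y/St({\bf X})}\chi(w_{\Bbb A})\prod_{v<\infty_F}\nu_v\bigl(H(F_v)\cap w_v St({\bf X}(o_{F_v}))w_v^{-1}\bigr)^{-1}.
\end{equation*}

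Since $Y$ is the restricted product of the local pieces $p^{-1}({\bf X}(o_{F_v}))$ and $St({\bf X}(o_{F_v}))={\bf G}(o_{F_v})$ already acts transitively at almost all $v$, both the indexing double coset space and the summand factorise as an Euler product over $v$. Partitioning ${\bf X}(o_{F_v})$ into its $St({\bf X}(o_{F_v}))$-orbits $(Pw_v)\cdot St({\bf X}(o_{F_v}))$ and using that $\overline{\chi}_v$ is the constant $\chi_v(w_v)$ on each such orbit, together with the gauge form compatibility $\omega_X\leftrightarrow\omega_G/\omega_H$ which gives the orbit $d_v$-volume $\lambda_v(St({\bf X}(o_{F_v})))/\nu_v(H(F_v)\cap w_v St({\bf X}(o_{F_v}))w_v^{-1})$, one finds the local factor to be precisely $N_v({\bf X},\overline{\chi}_v)/\lambda_v(St({\bf X}(o_{F_v})))$. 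Combining this Euler product with the value of $R({\bf X}\cdot 1_{\Bbb A})$ supplied by Lemma \ref{stand} makes the local volumes $\lambda_v(St({\bf X}(o_{F_v})))$ cancel and produces the claimed identity. The main technical point is this local orbit volume calculation, which requires a coherent choice of Haar measure on each conjugate $w_v^{-1}H(F_v)w_v$; this is furnished by the algebraic matching of gauge forms recalled in \S 2.4 of \cite{W}, together with the hypothesis that $H$ has no nontrivial $F$-characters.
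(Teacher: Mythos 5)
Your proposal is correct and follows essentially the same route as the paper: both apply Proposition \ref{explicit} to $r({\bf X}\cdot\sigma_{\Bbb A}^{-1})$, observe that translating the double coset representatives by $\sigma_{\Bbb A}$ makes the index set independent of $\sigma_{\Bbb A}$ with exactly one coset of $\sigma_{\Bbb A}$ modulo $H(\Bbb A_F)[G,{\bf X}]$ contributing per representative, and then use the matching of the gauge forms $\omega_X,\omega_H,\omega_G$ to convert $\nu_v(H(F_v)\cap u_vSt({\bf X}(o_{F_v}))u_v^{-1})^{-1}$ into $d_v(P\cdot u_vSt({\bf X}(o_{F_v})))\cdot\lambda_v(St({\bf X}(o_{F_v})))^{-1}$, reassembling the orbit sum into $\prod_v N_v({\bf X},\overline{\chi}_v)$ and cancelling the local volumes against $R({\bf X}\cdot 1_{\Bbb A})$ via Lemma \ref{stand}. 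The change of variable $w_{\Bbb A}=u_{\Bbb A}\sigma_{\Bbb A}$ is exactly the paper's choice of representatives $u_{\Bbb A}\sigma_{\Bbb A}^{-1}$ written in the opposite direction, so there is no substantive difference.
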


\begin{proof} By Corollary \ref{weldefine}, the left hand side is well-defined.
Let
$$ p^{-1}({\bf X} \cdot 1_\Bbb A) = \bigcup_{u_\Bbb A} H(\Bbb A_F) u_\Bbb A St({\bf X}). $$ Then
$$ p^{-1} ({\bf X} \cdot \sigma_\Bbb A^{-1}) = \bigcup_{u_\Bbb A} H(\Bbb A_F) u_\Bbb A\sigma_\Bbb A^{-1} (\sigma_\Bbb A St({\bf X}) \sigma_\Bbb A^{-1})  $$  and
$$r({\bf X} \cdot \sigma_\Bbb A^{-1} )= \frac{\tau(H)}{h({\bf X})} \sum_{u_\Bbb A} (\prod_{v<\infty_F} \nu_v  (H(F_v) \cap (u_v  St({\bf X}(o_{F_v})) u_v^{-1}))^{-1}) $$
with $u_\Bbb A \sigma_\Bbb A^{-1} \in H(\Bbb A_F)[G,{\bf X}]$ by Prop.\ref{explicit}. Since the measures $$d_{X}=\prod_{v\in \Omega_F}d_v, \ \ \ \nu_{H}=\prod_{v\in \Omega_F} \nu_v \ \ \ \text{ and } \ \ \  \lambda_{G}=\prod_{v\in \Omega_F} \lambda_v  $$ match together, one has
$$\lambda_v (u_v  St({\bf X}(o_{F_v})) u_v^{-1})= d_v (P\cdot(u_v  St({\bf X}(o_{F_v})) u_v^{-1})) \cdot  \nu_v (H(F_v) \cap (u_v  St({\bf X}(o_{F_v})) u_v^{-1}))$$ for $v<\infty_F$. Therefore
$$\nu_v (H(F_v) \cap (u_v  St({\bf X}(o_{F_v})) u_v^{-1}))^{-1}= d_v(P\cdot u_v St({\bf X}(o_{F_v}))) \cdot \lambda_v (St({\bf X}(o_{F_v})))^{-1}$$
for $v<\infty_F$. This implies that
$$ \chi (\sigma_\Bbb A) \cdot r({\bf X} \cdot \sigma_\Bbb A^{-1})= \frac{\tau(H) s({\bf X})}{\tau(G) h({\bf X})}R({\bf X} \cdot 1_\Bbb A) \sum_{u_\Bbb A} \prod_{v<\infty_F} \int_{P\cdot u_v St({\bf X}(o_{F_v}))} \overline{\chi}_v d_v  $$ by Lemma \ref{stand} with $u_\Bbb A \in H(\Bbb A_F)[G,{\bf X}]\sigma_\Bbb A$. One concludes that $$ \sum_{\sigma_\Bbb A\in G(\Bbb A_F)/H(\Bbb A_F)[G,{\bf X}]}  \chi (\sigma_\Bbb A) \cdot r({\bf X} \cdot \sigma_\Bbb A^{-1})= \frac{\tau(H) s({\bf X})}{\tau(G) h({\bf X})}R({\bf X} \cdot 1_\Bbb A) \prod_{v<\infty_F} \int_{{\bf X}(o_{F_v})} \overline{\chi}_v d_{v} $$ and the proof is complete.
\end{proof}

\begin{cor} \label{key}  $$r({\bf X} \cdot 1_\Bbb A)=\frac{\tau(H)}{\tau(G)}R({\bf X} \cdot 1_\Bbb A) \sum_{\chi} (\prod_{v<\infty_F} N_v({\bf X}, \overline{\chi}_v))$$ where $\chi$ runs over all characters in (\ref{chara-brau}).
\end{cor}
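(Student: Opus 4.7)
The plan is to derive this corollary as a straightforward consequence of Theorem \ref{premain} by summing over all characters $\chi$ in (\ref{chara-brau}) and invoking orthogonality of characters on the finite abelian group $G(\Bbb A_F)/H(\Bbb A_F)[G,{\bf X}]$.

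First I would verify that this quotient group is indeed finite and abelian, so that orthogonality applies. Abelianness follows because $[G,{\bf X}]$ contains $[G(\Bbb A_F),G(\Bbb A_F)]$ by definition, so $G(\Bbb A_F)/[G,{\bf X}]$ (and hence any further quotient) is abelian. Finiteness follows because $[G(\Bbb A_F):[G,{\bf X}]] = s({\bf X})$ is finite by the remark preceding Definition \ref{mass}. Moreover, by the second isomorphism theorem,
\[
[H(\Bbb A_F)[G,{\bf X}]:[G,{\bf X}]] = [H(\Bbb A_F) : H(\Bbb A_F)\cap[G,{\bf X}]] = h({\bf X}),
\]
so the index $n := [G(\Bbb A_F):H(\Bbb A_F)[G,{\bf X}]]$ equals $s({\bf X})/h({\bf X})$.

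Next I would sum the identity from Theorem \ref{premain} over all characters $\chi$ of $G(\Bbb A_F)/H(\Bbb A_F)[G,{\bf X}]$. On the left-hand side, interchanging the order of summation and applying orthogonality of characters yields
\[
\sum_{\chi}\sum_{\sigma_\Bbb A} \chi(\sigma_\Bbb A)\, r({\bf X}\cdot\sigma_\Bbb A^{-1}) = \sum_{\sigma_\Bbb A}\Big(\sum_\chi \chi(\sigma_\Bbb A)\Big) r({\bf X}\cdot\sigma_\Bbb A^{-1}) = n\cdot r({\bf X}\cdot 1_\Bbb A),
\]
since the inner character sum vanishes unless $\sigma_\Bbb A$ is trivial in the quotient, and in that case Corollary \ref{weldefine} guarantees $r({\bf X}\cdot\sigma_\Bbb A^{-1}) = r({\bf X}\cdot 1_\Bbb A)$. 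On the right-hand side one simply gets $\frac{\tau(H)s({\bf X})}{\tau(G)h({\bf X})}\sum_\chi\prod_{v<\infty_F} N_v({\bf X},\overline{\chi}_v)$.

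Finally, substituting $n = s({\bf X})/h({\bf X})$ and cancelling this common factor from both sides produces
\[
\frac{r({\bf X}\cdot 1_\Bbb A)}{R({\bf X}\cdot 1_\Bbb A)} = \frac{\tau(H)}{\tau(G)}\sum_\chi\prod_{v<\infty_F} N_v({\bf X},\overline{\chi}_v),
\]
which rearranges to the claimed formula. There is no genuine obstacle here; the only point requiring some care is justifying that Corollary \ref{weldefine} applies to make $r({\bf X}\cdot\sigma_\Bbb A^{-1})$ depend only on the coset of $\sigma_\Bbb A$ modulo $H(\Bbb A_F)[G,{\bf X}]$ (so that the left-hand side of Theorem \ref{premain} makes sense as a sum over the quotient and so that the trivial-coset evaluation equals $r({\bf X}\cdot 1_\Bbb A)$).
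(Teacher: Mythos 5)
Your proposal is correct and follows essentially the same route as the paper: the paper's own (terser) proof cites exactly the two facts you use, namely the index identity $[G(\Bbb A_F): H(\Bbb A_F)[G,{\bf X}]] = s({\bf X})/h({\bf X})$ and the orthogonality relation $\sum_\chi \chi(\sigma_\Bbb A) = s({\bf X})/h({\bf X})$ or $0$ according to whether $\sigma_\Bbb A$ lies in $H(\Bbb A_F)[G,{\bf X}]$, applied to the sum of the identity of Theorem \ref{premain} over all characters. Your additional remarks on finiteness, abelianness, and the role of Corollary \ref{weldefine} are correct justifications of points the paper leaves implicit.
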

\begin{proof} It follows from
$$ [G(\Bbb A_F): H(\Bbb A_F)[G,{\bf X}]] = \frac{s({\bf X})}{h({\bf X})} $$ and
$$ \sum_{\chi}  \chi (\sigma_\Bbb A)= \begin{cases}  [G(\Bbb A_F): H(\Bbb A_F)[G,{\bf X}]] \ \ \ & \text{if $\sigma_\Bbb A \in H(\Bbb A_F)[G,{\bf X}]$}  \\
0 \ \ \ & \text{otherwise} \end{cases}
$$ where $\chi$ runs over all characters in (\ref{chara-brau}).
\end{proof}

The immediate application of the above result is to test the existence of the integral points on $\bf X$.

\begin{cor} If $[G,{\bf X}]= G(F)St({\bf X})$ and the map $G(F)\xrightarrow{p} X(F)$ in (\ref{induces}) is surjective, then
$${\bf X}(o_F) \neq \emptyset \ \  \Leftrightarrow \ \ \{ (x_v) \in \prod_{v<\infty_F} {\bf X}(o_{F_v}):  \prod_{v<\infty_F} \overline{\chi}_v(x_v)=1, \  \forall \chi \ \text{in} \ (\ref{chara-brau})\} \neq \emptyset .$$
\end{cor}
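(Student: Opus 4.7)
The plan is to chain together three equivalences using the machinery already developed in the excerpt.

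The forward direction ($\Rightarrow$) is immediate: given $x\in{\bf X}(o_F)$, surjectivity of $p$ on $F$-points provides $g\in G(F)$ with $p(g)=x$. The constant adele $(g)_{v\in\Omega_F}$ lies in the diagonal image of $G(F)$, hence in $H(\Bbb A_F)[G,{\bf X}]$, so $\chi((g))=1$ for every character $\chi$ in (\ref{chara-brau}). Because $\chi_v=1$ at archimedean places, this reads $\prod_{v<\infty_F}\overline{\chi}_v(x)=1$, so the diagonally embedded tuple $(x)_{v<\infty_F}$ lies in the displayed set.

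For the converse ($\Leftarrow$) I would establish the chain
$$ {\bf X}(o_F)\neq\emptyset \ \Longleftrightarrow\ r({\bf X}\cdot 1_\Bbb A)>0 \ \Longleftrightarrow\ \sum_{\chi}\prod_{v<\infty_F} N_v({\bf X},\overline{\chi}_v)>0 \ \Longleftrightarrow\ \text{the displayed set is non-empty}. $$
The first equivalence is Proposition~\ref{arith}: its hypotheses match those of the corollary, and each $m({\bf X}\cdot \tau_i^{-1})$ is a strictly positive finite Haar measure, so the sum defining $r({\bf X}\cdot 1_\Bbb A)$ vanishes iff ${\bf X}(o_F)$ is empty. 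The second equivalence follows at once from Corollary~\ref{key}, since the prefactor $\tau(H)R({\bf X}\cdot 1_\Bbb A)/\tau(G)$ is strictly positive.

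For the third equivalence, I would pull the finite character sum inside the adelic integral (Fubini applies because $\overline{\chi}_v \equiv 1$ for almost all $v$):
$$ \sum_\chi \prod_{v<\infty_F} N_v({\bf X},\overline{\chi}_v) \ =\ \int_{\prod_{v<\infty_F}{\bf X}(o_{F_v})} \Big( \sum_\chi \prod_{v<\infty_F}\overline{\chi}_v(x_v) \Big) \prod_{v<\infty_F} d_v. $$
By orthogonality of characters on the finite abelian group $G(\Bbb A_F)/H(\Bbb A_F)[G,{\bf X}]$, the inner sum equals the order of this group whenever $\prod_{v<\infty_F}\overline{\chi}_v(x_v)=1$ for every $\chi$, and vanishes otherwise. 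Hence the outer integral is positive iff the constraint set has positive measure; and since each $\overline{\chi}_v$ is locally constant, this set is open in $\prod_{v<\infty_F}{\bf X}(o_{F_v})$, so positivity of measure is equivalent to non-emptiness. The main delicate point is this Fubini/orthogonality computation at the adelic level; every other step is a direct invocation of a result already proved in the excerpt.
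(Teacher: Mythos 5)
Your proposal is correct and follows essentially the same route as the paper: Proposition \ref{arith} to relate ${\bf X}(o_F)\neq\emptyset$ to $r({\bf X}\cdot 1_\Bbb A)>0$, Corollary \ref{key} to convert this into the non-vanishing of $\sum_\chi\prod_{v<\infty_F}N_v({\bf X},\overline{\chi}_v)$, and then character orthogonality on the finite group $G(\Bbb A_F)/H(\Bbb A_F)[G,{\bf X}]$ together with openness of the constraint set to conclude. The only cosmetic difference is that the paper carries out the orthogonality step by decomposing $\prod_{v<\infty_F}{\bf X}(o_{F_v})$ into finitely many closed pieces on which every $\chi$ is constant, whereas you interchange the finite character sum with the integral directly; these are the same computation.
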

\begin{proof} By Proposition \ref{arith},  one has
$$ {\bf X}(o_F) = \emptyset \ \ \Leftrightarrow \ \ r({\bf X} \cdot 1_\Bbb A)=0 .$$ Therefore
$$ {\bf X}(o_F) = \emptyset \ \ \Leftrightarrow \ \ \sum_{\chi} \prod_{v<\infty_F}  N_v({\bf X}, \overline{\chi}_v) =0  $$
by Corollary \ref{key}. Decompose
$$ \prod_{v<\infty_F} {\bf X}(o_{F_v}) = D_0 \cup D_1 \cup \cdots \cup D_l $$ as disjoint closed subsets such that $\chi|_{D_i}$ is constant for all $\chi$ with $0\leq i\leq l$ and  $$D_0 =\{ (x_v) \in \prod_{v<\infty_F} {\bf X}(o_{F_v}):  \prod_{v<\infty_F} \overline{\chi}_v(x_v)=1, \  \forall \chi \ \text{in} \ (\ref{chara-brau})\}. $$
For any $i\neq 0$, there is $\chi$ in (\ref{chara-brau}) such that $\chi(D_i)\neq 1$. This implies that
$$\sum_{\chi}\int_{D_i}\chi d_{X} = (\sum_{\chi} \chi(D_i)) \int_{D_i} d_{X} =0.  $$ Therefore
$$\sum_{\chi} \prod_{v<\infty_F}  N_v({\bf X}, \overline{\chi}_v) = \frac{s({\bf X})}{h({\bf X})}\int_{D_0} d_{X} .$$ One concludes that
$$ {\bf X}(o_F) = \emptyset \ \ \Leftrightarrow \ \ \int_{D_0} d_{X} =0 .  $$
Since $D_0$ is an open compact subset of $\prod_{v<\infty_F} {\bf X}(o_{F_v})$, one has
$$ \int_{D_0} d_{X} =0 \ \ \Leftrightarrow \ \ D_0=\emptyset $$
and the proof is complete.
\end{proof}

To conclude this section, we will point out that all characters in (\ref{chara-brau}) can be interpreted as the elements in $Br(X)$. Indeed, one can view $\chi$ as an element $\xi$ in $Br(X)$ satisfying
$$  \xi(x_\Bbb A \cdot s_\Bbb A)= \xi (x_\Bbb A) $$
for all $x_\Bbb A\in X(\Bbb A_F)$ and $s_\Bbb A\in St({\bf X})$ by Theorem 8.2 in \cite{Dem}, Theorem 2.8 and Corollary 3.5 in \cite{BD}.

\bigskip

\section{Counting the integral points via equi-distribution}   \label{sec.main}

We keep the same notation as that in the previous sections.

\begin{lem} \label{average} Suppose $X=H\backslash G$ where both $G$ and $H$ are the connected reductive groups without non-trivial characters over $F$. For any finite subgroup $B$ of $Br(X)/Br(F)$, one has
$$ \sum_{\xi \in B} (\prod_{v<\infty_F} \int_{{\bf X}(o_{F_v})} \xi d_v \cdot \int_{X(F_\infty, T)} \xi d_{\infty_F}) =  \sharp B  \int_{(\prod_{v<\infty_F} {\bf X}(o_{F_v}) \times X(F_\infty, T))^B} d_{X} $$
where
$$(\prod_{v<\infty_F} {\bf X}(o_{F_v}) \times X(F_\infty, T))^B= (\prod_{v<\infty_F} {\bf X}(o_{F_v}) \times X(F_\infty, T))\cap ({\bf X} \cdot 1_{\Bbb A} )^B  $$ and $\bf X$ is a separated scheme of finite type over $o_F$ such that ${\bf X}\times_{o_F} F=X$. \end{lem}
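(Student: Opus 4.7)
The plan is to reinterpret the left-hand side as a single adelic integral and then apply orthogonality of characters on the finite abelian group $B$. Throughout, each $\xi\in B$ is viewed as a locally constant function $X(\Bbb A_F)\to \mu_\infty$ via the Brauer--Manin pairing, whose value at $x_\Bbb A=(x_v)$ factors as $\xi(x_\Bbb A)=\prod_v \xi(x_v)$; moreover $\xi|_{{\bf X}(o_{F_v})}\equiv 1$ for almost all $v$, because $\xi$ extends to a Brauer class on an integral model of ${\bf X}$ over some $o_S$.

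Set $U:=\prod_{v<\infty_F}{\bf X}(o_{F_v})\times X(F_\infty,T)$, which lies in ${\bf X}\cdot 1_\Bbb A$. Splitting the Tamagawa measure as $d_X=d_{\infty_F}\otimes\prod_{v<\infty_F}d_v$ and applying Fubini (convergence of the infinite product of local integrals follows from the convergence factors built into $d_X$ together with $|\xi|\leq 1$ and the triviality observation above), one obtains
\[
\prod_{v<\infty_F}\int_{{\bf X}(o_{F_v})}\xi\,d_v\cdot\int_{X(F_\infty,T)}\xi\,d_{\infty_F}=\int_{U}\xi\,d_X.
\]
Summing over $\xi\in B$ and interchanging the finite sum with the integral gives
\[
\sum_{\xi\in B}\int_U\xi\,d_X=\int_U\Big(\sum_{\xi\in B}\xi(x_\Bbb A)\Big)d_X.
\]
For each fixed $x_\Bbb A$, the map $\xi\mapsto\xi(x_\Bbb A)$ is a group homomorphism from the finite abelian group $B$ into $\mu_\infty$, so by orthogonality of characters
\[
\sum_{\xi\in B}\xi(x_\Bbb A)=\begin{cases}\sharp B & \text{if } x_\Bbb A\in ({\bf X}\cdot 1_\Bbb A)^B,\\ 0 & \text{otherwise.}\end{cases}
\]
Substituting back and using $U\subseteq {\bf X}\cdot 1_\Bbb A$, the right-hand side becomes $\sharp B\cdot \int_{U\cap ({\bf X}\cdot 1_\Bbb A)^B}d_X$, which is precisely the stated identity.

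There is no deep obstruction; the argument is essentially Fubini plus character orthogonality. The one point requiring care is well-definedness: individual local integrals $\int_{{\bf X}(o_{F_v})}\xi\,d_v$ depend on the choice of representative of $\xi\in Br(X)/Br(F)$, because an element of $Br(F)$ specializes at $v$ to a constant $c_v\in\mu_\infty$. However, global reciprocity $\prod_v c_v=1$ ensures that the product over all places, and therefore both sides of the asserted identity, depend only on the class of $\xi$ modulo $Br(F)$, so the sum over $B\subseteq Br(X)/Br(F)$ is meaningful and the above manipulations are valid.
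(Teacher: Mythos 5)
Your proof is correct and follows essentially the same route as the paper: both arguments reduce the identity to orthogonality of characters for the homomorphisms $\xi\mapsto\xi(x_{\Bbb A})$ on the finite abelian group $B$, combined with the factorization of the Tamagawa measure and of the Brauer--Manin evaluation into local pieces. The only cosmetic difference is that the paper applies the orthogonality relation on a finite decomposition of $\prod_{v<\infty_F}{\bf X}(o_{F_v})\times X(F_\infty,T)$ into closed pieces on which every $\xi\in B$ is constant, whereas you apply it pointwise under the integral sign; these are equivalent since the $\xi$ are locally constant.
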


\begin{proof} Since $G$ and $H$ have no non-trivial $F$-characters, one concludes that the infinite product in the above lemma is convergent. Decompose $$ \prod_{v<\infty_F} {\bf X}(o_{F_v}) \times X(F_\infty, T)= \bigcup_i X_i $$ as a finite disjoint union such that all $X_i$'s are all closed and $\xi$ takes the constant value on $X_i$ for all $\xi\in B$.

If there is $\xi\in B$ such that $\xi$ does not take the trivial value on $X_i$, then
$$\sum_{\xi \in B} \int_{X_i} \xi d_{X} = (\sum_{\xi \in B} \xi(X_i)) \int_{X_i} d_{X} = 0 $$ and the proof is complete.
\end{proof}

We further assume the following equi-distribution property

\begin{equation} \label{equi-dis}
\sharp \{y \in  x \cdot \Gamma:  |y|_{\infty_F}\leq  T  \} \sim  \frac{\nu_{x, \infty_F}( \Gamma_{H_x}\backslash H_x(F_\infty)) }{\lambda_{\infty_F} (\Gamma \backslash G(F_\infty)) } d_{\infty_F} (x \cdot G(F_{\infty}) \cap X(F_\infty,  T))
\end{equation}
as $T \rightarrow \infty$, where $H_x$ is the stabilizer of $x$ in $G$, $\Gamma_{H_x}=H_x(F) \cap \Gamma$ and $\nu_{x, \infty_F}$ is the induced Tamagawa measure over $H_x$. This assumption has been proved in the following various situations.

1).  $G$ is an anisotropic torus and $X$ is a trivial torsor of $G$. The proof is exactly similar to the proof of Dirichlet Unit Theorem, which is given in \cite{Shy} (see also Theorem 5.12 in \cite{PR94}).

2). $G$ is simply connected, almost $F$-simple, $G(F_\infty)$ is not compact and $X$ is symmetric. Such result was first proved in \cite{DRS}, soon simplified in \cite{EM} over $\Bbb Q$ and extended to general number fields in \cite{BO}.

3). $H$ is a maximal proper connected reductive group over $\Bbb Q$. Such result was proved in \cite{EMS}, which extends the result in \cite{DRS} and \cite{EM} over $\Bbb Q$.

\begin{thm}\label{semisimple} Suppose $X=H\backslash G$ where $G$ is semi-simple and simply connected and $G'(F_\infty)$ is not compact for any non-trivial simple factor $G'$ of $G$ and $H$ is connected reductive without non-trivial $F$-characters. Then
$$ N({\bf X}, T) \sim  \sum_{\xi\in (Br(X)/Br(F))} (\prod_{v<\infty_F}  N_v({\bf X}, \xi)) \cdot N_{\infty_F} (X, T, \xi)$$
as $T\rightarrow \infty$ under the assumption (\ref{equi-dis}), where $\bf X$ is a separated scheme of finite type over $o_F$ such that ${\bf X}\times_{o_F} F=X$.
\end{thm}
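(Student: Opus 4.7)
The strategy is to decompose $\mathbf{X}(o_F)$ into finitely many $\Gamma$-orbits, apply the equi-distribution assumption (\ref{equi-dis}) to each orbit, and then reassemble the resulting Tamagawa volumes using the mass formulas from Section \ref{sec.mass2} and Lemma \ref{average}. By Corollary \ref{globorb}, write $\mathbf{X}(o_F) = \bigsqcup_\alpha y_\alpha\Gamma$ as a finite disjoint union; for each orbit (\ref{equi-dis}) gives
$$\#\{z\in y_\alpha\Gamma : |z|_{\infty_F}\leq T\}\sim \frac{\nu_{y_\alpha,\infty_F}(\Gamma_{H_{y_\alpha}}\backslash H_{y_\alpha}(F_\infty))}{\lambda_{\infty_F}(\Gamma\backslash G(F_\infty))}\,d_{\infty_F}(y_\alpha G(F_\infty)\cap X(F_\infty,T)).$$
Strong approximation (applicable since $G$ is simply connected with no compact simple factor at infinity) gives $G(\mathbb A_F)=G(F)St(\mathbf{X})$, hence $\lambda_{\infty_F}(\Gamma\backslash G(F_\infty))=\tau(G)\prod_{v<\infty_F}\lambda_v(St(\mathbf{X}(o_{F_v})))^{-1}$, and $\tau(G)=1$.

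Next I would group the orbits by their $\sim$-class from Definition \ref{locquiv}. By Proposition \ref{equiv}(1) these classes correspond bijectively to the finitely many $St(\mathbf{X})$-orbits in $(\mathbf{X}\cdot 1_{\mathbb A})^{Br(X)}$. Since $St(\mathbf{X}(o_{F_v}))=G(F_v)$ at archimedean $v$, all $y_\alpha$ lying in a single class $x_iSt(\mathbf{X})$ share a common archimedean orbit $O_i:=x_{i,\infty}G(F_\infty)$, so the $d_{\infty_F}$ factor pulls out. Within each class, the $G(F)$-orbits are enumerated by a torsor under $\ker^1(F,H):=\ker(H^1(F,H)\to\prod_v H^1(F_v,H))$ by Proposition \ref{sha}, each with stabilizer $H_{ij}$ an inner $F$-twist of $H$; by Proposition \ref{refine} combined with strong approximation, the $\Gamma$-orbits inside a $G(F)$-orbit are enumerated by a class set of $H_{ij}$. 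Applying the mass formalism of Section \ref{sec.mass2} (specifically Proposition \ref{arith} and Corollary \ref{key}) to the twisted homogeneous space $H_{ij}\backslash G$ — for which the map $p$ becomes surjective on $F$-points by definition of $H_{ij}$ and the hypothesis $[G,\mathbf{X}]=G(F)St(\mathbf{X})$ follows from strong approximation — yields, using Kottwitz's equality $\tau(H_{ij})=\tau(H)$ for inner forms, that the total sum of archimedean $\nu$-volumes over all $\Gamma$-orbits in a single $G(F)$-orbit equals $\tau(H)\prod_{v<\infty_F}\nu_v^{(ij)}(H_{ij}(F_v)\cap u_v^{-1}St(\mathbf{X}(o_{F_v}))u_v)^{-1}$.

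The gauge-form matching $\lambda_v=d_v\cdot\nu_v$ converts each such local $\nu$-factor times $\lambda_v(St(\mathbf{X}(o_{F_v})))$ into a local $d_v$-measure of the corresponding piece of the class. Summing the $|\ker^1(F,H)|$ $G(F)$-orbits in each class, the contribution of $x_iSt(\mathbf{X})$ to $N(\mathbf{X},T)$ becomes $|\ker^1(F,H)|\cdot d_X\bigl(x_iSt(\mathbf{X})\cap(\prod_{v<\infty_F}\mathbf{X}(o_{F_v})\times X(F_\infty,T))\bigr)$. Summing over the $\sim$-classes and using Proposition \ref{equiv}(1) to identify $\bigsqcup_i x_iSt(\mathbf{X})$ with the Brauer-Manin integral locus gives
$$N(\mathbf{X},T)\sim |\ker^1(F,H)|\int_{(\prod_{v<\infty_F}\mathbf{X}(o_{F_v})\times X(F_\infty,T))^{Br(X)}} d_X.$$

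Finally, let $B\subseteq Br(X)/Br(F)$ be the finite subgroup of classes with nontrivial effect on $\mathbf{X}\cdot 1_{\mathbb A}$. Lemma \ref{average} gives $\sum_{\xi\in B}(\prod_{v<\infty_F}N_v(\mathbf{X},\xi))\cdot N_{\infty_F}(X,T,\xi)=\#B\cdot\int_{(\cdots)^B}d_X$, while $\xi\notin B$ contribute zero by orthogonality. The proof is completed by the identification $\#B=|\ker^1(F,H)|$, which follows from Poitou-Tate type duality between $\ker^1(F,H)$ and the relevant subquotient of $Br(X)/Br(F)$ (via the cited results of \cite{BD} together with Theorem 8.2 of \cite{Dem} used at the end of Section \ref{sec.mass2}). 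The main obstacle is Step 3: organizing the sum of archimedean arithmetic-quotient volumes coherently across the different $G(F)$-orbits (each carrying its own twisted stabilizer $H_{ij}$) and then producing, after the gauge-form matching, a clean single $d_X$-volume of the full local class; the identification $\#B=|\ker^1(F,H)|$ at the end, though standard, must be invoked carefully because the equality of counts is what guarantees that the Brauer-Manin average picks up exactly the right constant.
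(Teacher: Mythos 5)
Your proposal follows essentially the same route as the paper's proof: the triple decomposition of ${\bf X}(o_F)$ into $\sim$-classes, $G(F)$-orbits and $\Gamma$-orbits, equi-distribution applied per $\Gamma$-orbit, reassembly of the $\Gamma$-orbits inside a single $G(F)$-orbit into the full Tamagawa number of the twisted stabilizer via strong approximation, Kottwitz's invariance of Tamagawa numbers under inner twisting, the gauge-form matching $\lambda_v=d_v\cdot\nu_v$, and finally Proposition \ref{equiv}(1), Lemma \ref{average} and the identification $\sharp \ker^1(F,H)=\sharp Pic(H)=\sharp(Br(X)/Br(F))$ (where the paper cites Sansuc's Theorem 10.1 and Proposition 2.10(ii) of \cite{CTX}, you invoke Poitou--Tate type duality). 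The one slip is your remark that classes ``$\xi\notin B$ contribute zero by orthogonality'': a class acting trivially on ${\bf X}\cdot 1_{\Bbb A}$ contributes the full untwisted volume rather than zero, so one should simply apply Lemma \ref{average} to the entire finite group $B=Br(X)/Br(F)$, exactly as the paper does.
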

\begin{proof} Decompose
$${\bf X}(o_F) =  \bigcup_{i}  ({\bf X}(o_F)\cap x_i St({\bf X}))$$ with $x_i\in {\bf X}(o_F)$ and $$ {\bf X}(o_F)\cap x_i St({\bf X})  = \bigcup_{j} (y_j^{(i)} G(F) \cap x_i St({\bf X}))= \bigcup_{j} (y_j^{(i)} G(F) \cap y_j^{(i)} St({\bf X})) $$ with $y_j^{(i)} \in {\bf X}(o_F)$ and
$$y_j^{(i)} G(F) \cap x_i St({\bf X})= \bigcup_{k} z_k^{(i,j)} \Gamma $$ with $z_k^{(i,j)}\in {\bf X}(o_F)$. Therefore $$ N({\bf X}, T) \sim \sum_i \sum_j \sum_k \frac{\nu^{(i,j,k)}_{\infty_F}( \Gamma_{i,j,k}\backslash H_{i,j,k}(F_\infty)) }{\lambda_{\infty_F} (\Gamma \backslash G(F_\infty)) } d_{\infty_F} (z_k^{(i,j)} \cdot G(F_{\infty}) \cap X(F_\infty,  T)) $$ as $T\rightarrow \infty$ by the assumption (\ref{equi-dis}), where $H_{i,j,k}$ is the stabilizer of $z_k^{(i,j)}$ in $G$, $\Gamma_{i,j,k}= H_{i,j,k}(F) \cap \Gamma$ and $\nu^{(i,j,k)}$ is the Tamagawa measure of $H_{i,j,k}$.

Since there is $g_k\in G(F)$ such that $z_{k}^{(i,j)}=y_{j}^{(i)}\cdot g_k$, one has the isomorphism between $H_{i,j,k}$ and the stabilizer $H_{i,j}$ of $y_j^{(i)}$ over $F$ given by the conjugation with $g_k$. This implies that
$$ \nu^{(i,j,k)}_{\infty_F}( \Gamma_{i,j,k}\backslash H_{i,j,k}(F_\infty))= \nu^{(i,j)}_{\infty_F}( g_k\Gamma g_k^{-1} \cap H_{i,j}(F) \backslash H_{i,j}(F_\infty)) $$
where $\nu^{(i,j)}$ is the Tamagawa measure over $H_{i,j}$.

Since there is $s_k\in St({\bf X})$ such that $z_{k}^{(i,j)}=y_j^{(i)} \cdot s_k$, one obtains
$$ \aligned & \nu^{(i,j)}_{\infty_F}( g_k\Gamma g_k^{-1} \cap H_{i,j}(F) \backslash H_{i,j}(F_\infty)) \\
= &
  \nu^{(i,j)} (H_{i,j}(F)\backslash H_{i,j}(F) (H_{i,j}(\Bbb A_F) \cap g_k St({\bf X}) g_k^{-1})) \\
 & \cdot \prod_{v<\infty_F} \nu^{(i,j)}_v (H_{i,j}(F_v) \cap g_k St({\bf X}(o_{F_v})) g_k^{-1})^{-1} \\
 = &
  \nu^{(i,j)} (H_{i,j}(F)\backslash H_{i,j}(F) h_k (H_{i,j}(\Bbb A_F) \cap St({\bf X}))) \\
 & \cdot \prod_{v<\infty_F} \nu^{(i,j)}_v (H_{i,j}(F_v) \cap  St({\bf X}(o_{F_v})))^{-1}
\endaligned
$$ where $h_k=g_k\cdot s_k^{-1} \in H_{i,j}(\Bbb A_F)$. Since $G(\A_F)=G(F)St(\bf X)$ by the strong approximation property of $G$, we have $$\bigcup_{k}H_{i,j}(F) h_k (H_{i,j}(\Bbb A_F) \cap St({\bf X}))=H_{i,j}(\Bbb A_F)$$ by Proposition \ref{refine}. Therefore
$$ \aligned N({\bf X}, T) \sim & \sum_i \frac{d_{\infty_F} (x_i \cdot G(F_{\infty}) \cap X(F_\infty,  T)) }{\lambda_{\infty_F} (\Gamma \backslash G(F_\infty)) }  \\
& ( \sum_j  \tau(H_{i,j}) \prod_{v<\infty_F} \nu^{(i,j)}_v (H_{i,j}(F_v) \cap  St({\bf X}(o_{F_v})))^{-1} )
\endaligned $$ as $T\rightarrow \infty$, where $\tau(H_{i,j})$ is the Tamagawa number of $H_{i,j}$.

Since
 $$\nu^{(i,j)}_v (H_{i,j}(F_v) \cap  St({\bf X}(o_{F_v})))^{-1}=\frac{ d_v(y_j^{(i)} St({\bf X}(o_{F_v})))}{\lambda_v (St({\bf X}(o_{F_v})))}=\frac{ d_v(x_i St({\bf X}(o_{F_v}))}{\lambda_v (St({\bf X}(o_{F_v})))}$$ and $H_{i,j}$ is an inner form of $H$, one concludes that $\tau(H_{i,j})=\tau(H)$ by (5.1.1) in \cite{kot} and
 $$  N({\bf X}, T) \sim  \sharp Pic(H) \sum_i \frac{d_{\infty_F} (x_i \cdot G(F_{\infty}) \cap X(F_\infty,  T)) \prod_{v<\infty_F} d_v(x_i St({\bf X}(o_{F_v}))}{\lambda_{\infty_F} (\Gamma \backslash G(F_\infty))\prod_{v<\infty_F} \lambda_v (St({\bf X}(o_{F_v})))} $$ as $T\rightarrow \infty$ by Proposition \ref{sha} and Theorem 10.1 in \cite{San}.

 By the strong approximation property for $G$ and \cite{kot88} and Proposition 2.10 (ii) in \cite{CTX}, one has
 $$ \aligned  & \lambda_{\infty_F} (\Gamma \backslash G(F_\infty))\prod_{v<\infty_F} \lambda_v (St({\bf X}(o_{F_v})))\\
 = & \lambda_G(G(F)\backslash G(F) St({\bf X}))= \lambda_G (G(F)\backslash G(\Bbb A_F))=1 \endaligned $$ and $\sharp Pic(H)= \sharp (Br(X)/Br(F))$. Then
 $$  N({\bf X}, T) \sim \sharp (Br(X)/Br(F)) \int_{(\prod_{v<\infty_F} {\bf X}(o_{F_v}) \times X(F_\infty, T))^{Br(X)}} d_{X} $$ as $T\rightarrow \infty$ by Proposition \ref{equiv}, 1). The result follows from Lemma \ref{average}.
\end{proof}

For general reductive groups, one has the similar result with some restriction.
\begin{thm} \label{thm:red} Suppose $G$ and $H$ are connected reductive groups without non-trivial characters over $F$ and $X=H\backslash G$. If $G'(F_\infty)$ is not compact for any non-trivial simple factor $G'$ of the semi-simple part of $G$ and the map $G(F)\xrightarrow{p} X(F)$ in (\ref{induces}) is surjective, then
$$ N({\bf X}, T) \sim \frac{\tau(H)}{\tau(G)} (\sum_{\chi} \prod_{v<\infty_F}  N_v({\bf X}, \overline{\chi}_v)) \cdot d_{\infty_F} (X(F_{\infty}, T))$$
as $T\rightarrow \infty$ under the assumption (\ref{equi-dis}), where $\chi$ runs over all characters in (\ref{chara-brau}) and $\bf X$ is a separated scheme of finite type over $o_F$ such that ${\bf X}\times_{o_F} F=X$.
\end{thm}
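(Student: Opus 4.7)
The strategy mirrors the proof of Theorem \ref{semisimple}: decompose ${\bf X}(o_F)$ into $\Gamma$-orbits, apply the equi-distribution assumption (\ref{equi-dis}) to each orbit, then package the resulting sum of local stabilizer volumes into a sum over characters by means of the mass formulae of Section \ref{sec.mass2}. The key simplification in the present setting is that the surjectivity of $p\colon G(F)\to X(F)$ guarantees every integral $\Gamma$-orbit is represented by some $y_i = P\cdot \tau_i$ with $\tau_i\in G(F)$, so that the archimedean orbit $y_i\cdot G(F_\infty) = P\cdot G(F_\infty)$ is the same for every $i$ and no summation over different orbit-types is required in the archimedean factor.

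First I would write ${\bf X}(o_F) = \bigcup_i y_i\cdot \Gamma$ with $y_i = P\cdot \tau_i$ and apply (\ref{equi-dis}) to each orbit. The stabilizer $H_{y_i}=\tau_i^{-1}H\tau_i$ is literally a $G(F)$-conjugate of $H$, so after changing variables by $\tau_i$ the archimedean stabilizer volume becomes $\nu_{y_i,\infty_F}(\Gamma_{H_{y_i}}\backslash H_{y_i}(F_\infty)) = m({\bf X}\cdot \tau_i^{-1})$ in the notation of Section \ref{sec.mass2} (so in particular $\tau(H_{y_i})=\tau(H)$ with no need to invoke Kottwitz's formula). Summing over $i$ and invoking Proposition \ref{arith} (which applies because the non-compactness of every simple factor of the semi-simple part of $G$ forces $[G,{\bf X}]=G(F)St({\bf X})$ by Theorem 7.28 of \cite{Dem}) collapses the numerator into $r({\bf X}\cdot 1_{\Bbb A})$, producing
$$
N({\bf X}, T)\,\sim\, \frac{r({\bf X}\cdot 1_{\Bbb A})}{\lambda_{\infty_F}(\Gamma\backslash G(F_\infty))}\cdot d_{\infty_F}(P\cdot G(F_\infty)\cap X(F_\infty, T)).
$$

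Next I would identify $\lambda_{\infty_F}(\Gamma\backslash G(F_\infty))$ with $R({\bf X}\cdot 1_{\Bbb A})$: using the coset decomposition $G(\Bbb A_F)=\bigcup_{i=1}^{s({\bf X})} G(F)St({\bf X})\tau_i$ (again via $[G,{\bf X}]=G(F)St({\bf X})$) and the product-measure factorization $\lambda_G(G(F)\backslash G(F)St({\bf X}))=\lambda_{\infty_F}(\Gamma\backslash G(F_\infty))\prod_{v<\infty_F}\lambda_v(St({\bf X}(o_{F_v})))$, comparison with Lemma \ref{stand} yields this identity. Then Corollary \ref{key} converts the ratio $r({\bf X}\cdot 1_{\Bbb A})/R({\bf X}\cdot 1_{\Bbb A})$ into $\frac{\tau(H)}{\tau(G)}\sum_\chi\prod_{v<\infty_F} N_v({\bf X},\overline{\chi}_v)$, matching the stated asymptotic up to the replacement of $d_{\infty_F}(P\cdot G(F_\infty)\cap X(F_\infty, T))$ by $d_{\infty_F}(X(F_\infty, T))$.

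The main obstacle I anticipate is precisely this last identification: since $G(F_v)$-orbits on $X(F_v)$ at archimedean places may be more numerous than the orbit through $P$, one must argue that the integral points only populate $P\cdot G(F_\infty)$ (immediate from the surjectivity of $p$ on $F$-points, hence on ${\bf X}(o_F)$) and that the remaining $G(F_v)$-orbits either carry no integral contribution or are absorbed into the convention used in each of the situations 1)--3) for which (\ref{equi-dis}) is known. Modulo this archimedean bookkeeping, the combination of equi-distribution with the orbit-counting machinery of Sections \ref{sec.orbit}--\ref{sec.mass2} gives the theorem in exactly the same fashion as Theorem \ref{semisimple}.
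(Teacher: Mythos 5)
Your proposal follows the paper's own argument essentially verbatim: Theorem 7.28 of \cite{Dem} gives $[G,{\bf X}]=G(F)St({\bf X})$, Proposition \ref{arith} plus the equi-distribution hypothesis (\ref{equi-dis}) reduce the count to $r({\bf X}\cdot 1_{\Bbb A})/\lambda_{\infty_F}(\Gamma\backslash G(F_\infty))$ times the archimedean volume, and the identification $\lambda_{\infty_F}(\Gamma\backslash G(F_\infty))=R({\bf X}\cdot 1_{\Bbb A})$ via Lemma \ref{stand} together with Corollary \ref{key} yields the stated constant. Your closing remark about replacing $d_{\infty_F}(P\cdot G(F_\infty)\cap X(F_\infty,T))$ by $d_{\infty_F}(X(F_\infty,T))$ is a point the paper itself passes over silently, so raising it is if anything more careful than the original.
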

\begin{proof} By Theorem 7.28 in \cite{Dem}, one has $[G,{\bf X}]= G(F)St({\bf X})$. By Proposition \ref{arith} and the assumption (\ref{equi-dis}), one has $$ N({\bf X}, T) \sim \frac{d_{\infty_F} ( X(F_\infty,  T)) }{\lambda_{\infty_F} (\Gamma \backslash G(F_\infty)) }  r({\bf X} \cdot 1_{\Bbb A})$$ as $T\rightarrow \infty$. Since
$$ \lambda_{\infty_F} (\Gamma \backslash G(F_\infty)) \prod_{v<\infty_F} \lambda_v (St({\bf X}(o_{F_v})) ) = \lambda_G(G(F) \backslash [G, {\bf X}]) =\frac{\tau(G)}{s({\bf X})},  $$ one has
$$ N({\bf X}, T) \sim \frac{\tau(H)}{\tau(G)}(\sum_{\chi} \prod_{v<\infty_F}  N_v({\bf X}, \overline{\chi}_v)) \cdot d_{\infty_F} (X(F_{\infty}, T))$$
as $T\rightarrow \infty$ by Corollary \ref{key} and Lemma \ref{stand}.
\end{proof}

\bigskip

\section{Application to norm equations}\label{sec.norm}

In this section, we will apply Theorem \ref{thm:red} to study the asymptotic formula of the number of integral solutions for the scheme $\bf X$ over $o_F$ whose generic fiber $X={\bf X}\times_{o_F} F$ is a trivial torsor of anisotropic torus. Let $G$ be a torus and $U$ be an open subgroup of $G(\Bbb A_F)$. A character
\begin{equation} \label{chara-ani} \varrho: \ \ \ G(\Bbb A_F)/G(F)U \longrightarrow \Bbb C^\times \end{equation} can be written as $\varrho=\prod_{v\in \Omega_F} \varrho_{F_v}$
where  $\varrho_{F_v}$ is induced by
$$ \varrho_{F_v}: G(F_v) \rightarrow  G(\Bbb A_F)/G(F)U \xrightarrow{\varrho} \Bbb C^\times $$ for any $v\in \Omega_F$. Let $X$ be a trivial torsor with a fixed rational point $P\in X(F)$ and $p: G\rightarrow X$ be a morphism induced by $P$. Then each $\varrho_{F_v}$ also induces a continuous function $\overline{\varrho}_v$ on $X(F_v)$ by setting
$$\overline{\varrho}_v (x_v) = \varrho_{F_v} (p^{-1}(x_v))$$
for any $x_v\in X(F_v)$ with $v\in \Omega_F$.  One can reformulate Theorem \ref{thm:red} in more flexible way.

\begin{lem} \label{lem:change-st} Let $G$ is an anisotropic torus and $X$ is a trivial torsor of $G$. If $\bf X$ is a separated scheme of finite type over $o_F$ such that ${\bf X}\times_{o_F} F=X$ and $U$ is an open subgroup of $St({\bf X})$,  then
$$N({\bf X}, T) \sim \frac{1}{\tau(G)}\sum_{\varrho} (\prod_{v<\infty_F}  N_v({\bf X}, \overline{\varrho}_v)) N_{\infty} (X,T,\overline{\varrho}_\infty) $$ as $T\rightarrow \infty$, where $\tau(G)$ is the Tamagawa number of $G$ and
$$N_v ({\bf X} ,  \overline{\varrho}_v)= \int_{{\bf X}(o_{F_v})} \overline{\varrho}_v
d_v  $$ for any $v< \infty_F$ and
$$ N_{\infty}(X, T, \overline{\varrho}_\infty) = \int_{X(F_\infty, T)} \overline{\varrho}_\infty d_{\infty_F} $$ for $T\gg 0$ and $\varrho$ runs over all characters in (\ref{chara-ani}).
\end{lem}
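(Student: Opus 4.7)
The plan is to reduce the lemma to Theorem \ref{thm:red} in its trivial-torsor torus case (i.e., Theorem 1.3 of the introduction) via a Fourier-theoretic comparison of the two character sums. First I would apply Theorem \ref{thm:red} directly to the given $\bf X$, obtaining
$$N({\bf X},T)\sim \frac{1}{\tau(G)}\sum_\chi \Bigl(\prod_{v<\infty_F}N_v({\bf X},\overline{\chi}_v)\Bigr)d_{\infty_F}(X(F_\infty,T)),$$
with $\chi$ ranging over characters of $G(\Bbb A_F)/G(F)St({\bf X})$. Both this sum and the target $\varrho$-sum are finite because $G(F)\backslash G(\Bbb A_F)$ is compact for anisotropic $G$, so the open subgroups $G(F)St({\bf X})$ and $G(F)U$ have finite index in $G(\Bbb A_F)$.

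Second, using orthogonality of characters on the finite abelian quotients $G(\Bbb A_F)/G(F)St({\bf X})$ and $G(\Bbb A_F)/G(F)U$, together with the fact that $p$ is a bijection for the trivial torsor, I would rewrite both character sums as measure integrals over the adelic region $Y_T:=\prod_{v<\infty_F}{\bf X}(o_{F_v})\times X(F_\infty,T)$. Setting $\Gamma U:=(G(F)\cap St({\bf X}))\,U$, the elementary identity $G(F)U\cap St({\bf X})=\Gamma U$ and the inclusion $Y_T\subseteq St({\bf X})\subseteq G(F)St({\bf X})$ give
$$\sum_\chi \Bigl(\prod_{v<\infty_F}N_v({\bf X},\overline{\chi}_v)\Bigr)d_{\infty_F}(X(F_\infty,T))=[G(\Bbb A_F):G(F)St({\bf X})]\,d_X(Y_T),$$
$$\sum_\varrho \Bigl(\prod_{v<\infty_F}N_v({\bf X},\overline{\varrho}_v)\Bigr)N_{\infty}(X,T,\overline{\varrho}_\infty)=[G(\Bbb A_F):G(F)U]\,d_X(Y_T\cap\Gamma U).$$
Since $[St({\bf X}):\Gamma U]=[G(\Bbb A_F):G(F)U]/[G(\Bbb A_F):G(F)St({\bf X})]$, the lemma reduces to the asymptotic equidistribution
$$d_X(Y_T\cap\Gamma U)\sim\frac{d_X(Y_T)}{[St({\bf X}):\Gamma U]}\qquad(T\to\infty).\qquad(\star)$$

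For $(\star)$, the finite part of $Y_T$ is identical to that of $St({\bf X})$, so the finite quotient $K:=St({\bf X})/\Gamma U$ is driven purely by its archimedean component; $(\star)$ becomes the statement that $X(F_\infty,T)$ equidistributes in $G(F_\infty)$ modulo the projection of $\Gamma U$. I would prove this by applying the equidistribution assumption (\ref{equi-dis}) both with $\Gamma$ and with $\Gamma_U:=\Gamma\cap U$; the $\Gamma_U$ version is legitimate because the Dirichlet-unit-theorem-type argument cited in situation 1) of Section \ref{sec.main} applies to any lattice of finite covolume in $G(F_\infty)$. Comparing the two orbit-count asymptotics for ${\bf X}(o_F)$ against its $\Gamma$- and $\Gamma_U$-decompositions, and converting to measures via the covolume relation $[\Gamma:\Gamma_U]\,\lambda_{\infty_F}(\Gamma\backslash G(F_\infty))=\lambda_{\infty_F}(\Gamma_U\backslash G(F_\infty))$, yields $(\star)$ after routine manipulation.

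The main obstacle is the archimedean equidistribution in $(\star)$, that is, verifying that the $|\cdot|_{\infty_F}$-balls $X(F_\infty,T)$ distribute uniformly among the finitely many cosets of a finite-index closed subgroup of $G(F_\infty)$. This is essentially the quantitative content of (\ref{equi-dis}) applied at the finer lattice level $\Gamma_U$, repackaged in terms of measures rather than orbit counts; the rest of the proof is Fourier-analytic bookkeeping, and once $(\star)$ is in hand the lemma follows by combining it with the two measure identities above and the Theorem \ref{thm:red} asymptotic.
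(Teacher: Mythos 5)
Your overall strategy---reduce to Theorem \ref{thm:red} and compare the $\varrho$-sum with the $\chi$-sum by character orthogonality---is the right one and matches the paper's. But two steps are wrong as written. First, the identification of the two character sums with $[G(\Bbb A_F):G(F)St({\bf X})]\,d_X(Y_T)$ and $[G(\Bbb A_F):G(F)U]\,d_X(Y_T\cap\Gamma U)$ rests on the inclusion $Y_T\subseteq St({\bf X})$ (via $p^{-1}$), which is false in general: at the places $v\in S$ the set ${\bf X}(o_{F_v})$ need not be a single $St({\bf X}(o_{F_v}))$-orbit through $P$, and $P$ need not even lie in ${\bf X}(o_{F_v})$ there. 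Orthogonality actually gives $[G(\Bbb A_F):G(F)St({\bf X})]\,d_X\bigl(Y_T\cap p(G(F)St({\bf X}))\bigr)$ and $[G(\Bbb A_F):G(F)U]\,d_X\bigl(Y_T\cap p(G(F)U)\bigr)$; this is repairable, but it changes the sets appearing in your $(\star)$.

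The genuine gap is your proposed proof of $(\star)$. Applying (\ref{equi-dis}) to $\Gamma$ and to $\Gamma_U=\Gamma\cap U$ and comparing the resulting asymptotics for $N({\bf X},T)$ only yields the relation $\sharp({\bf X}(o_F)/\Gamma_U)=[\Gamma:\Gamma_U]\cdot\sharp({\bf X}(o_F)/\Gamma)$, which is trivially true (the action is free) and carries no information about the Haar measure of $Y_T\cap p(G(F)U)$. The statement $(\star)$ is a pure measure statement about how the region $Y_T$ distributes among the cosets of $G(F)U$ inside $G(F)St({\bf X})$; it cannot be extracted from lattice-point counts, and no further appeal to (\ref{equi-dis}) is needed or useful. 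The correct argument (the paper's) is to choose coset representatives $b_j\in St({\bf X})$ for $G(F)St({\bf X})/G(F)U$ (every such coset meets $St({\bf X})$), observe that translation by $b_j$ preserves $\prod_{v<\infty_F}{\bf X}(o_{F_v})$ exactly and $X(F_\infty,T)$ up to an asymptotically negligible boundary effect, so the $m$ pieces $Y_T\cap p(a_ib_jG(F)U)$ have (asymptotically) equal measure, whence every $\varrho$ that is non-trivial on $G(F)St({\bf X})/G(F)U$ contributes $0$ by $\sum_j\varrho(b_j)=0$ and the $\varrho$-sum collapses to the $\chi$-sum of Theorem \ref{thm:red}. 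You should replace your $(\star)$-via-equidistribution step with this translation-invariance argument.
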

\begin{proof}
 Decompose $$G(\Bbb A_F)= \bigcup_{i=1}^n a_i G(F) St({\bf X})  \ \ \ \text{and} \ \ \ G(F) St({\bf X})= \bigcup_{j=1}^m b_j G(F)U $$ where we choose $b_j\in St({\bf X})$ for $1\leq j\leq m$.  Then
 $$(\prod_{v<\infty_F}  N_v({\bf X}, \overline{\varrho}_v)) N_{\infty} (X,T,\overline{\varrho}_\infty) = \sum_{i=1}^n\varrho(a_i)\sum_{j=1}^m \varrho(b_j) \int_{D_{i,j}}d_X $$
 where $$  D_{i,j} = (X(F_\infty, T) \times \prod_{v<\infty_F} {\bf X}(o_{F_v})) \bigcap p(a_i b_j G(F)U)$$ for $1\leq i\leq n$ and $1\leq j\leq m$. Since $b_j\in St({\bf X})$, one has that
 $$\int_{D_{i,1}}d_X = \cdots = \int_{D_{i,m}} d_X  $$ for $1\leq i\leq n$. If the restriction of $\varrho$ to $G(F)St({\bf X})/G(F)U$ is not trivial, then $$\sum_{j=1}^m \varrho(b_j)=0 . $$
 Therefore
$$  \sum_{\varrho} (\prod_{v<\infty_F}  N_v({\bf X}, \overline{\varrho}_v)) N_{\infty} (X,T,\overline{\varrho}_\infty)  = \sum_{\chi} \prod_{v<\infty_F} N_v({\bf X}, \overline{\chi}_v) \cdot d_{\infty_F} (X(F_{\infty}, T)) $$
where $\chi$ runs over all characters in (\ref{chara-brau}) and the result follows from Theorem \ref{thm:red}.
\end{proof}

In practice, one can use the idea in \cite{WX} and \cite{WX1} to embed a torus to certain standard torus $\prod_\lambda R_{E_\lambda/F}(\G_m)$ and apply the results in \cite{WX} and \cite{WX1} to get more computable formulae. We explain this point by considering the more concrete scheme $\bf X$ over $o_F$ defined by
\begin{equation} \label{normequ} N_{K/F}(\alpha_1 x_1+\cdots+ \alpha_n x_n)=m \end{equation} where $K/F$ is a finite extension of degree $n$ and $\alpha_1,\cdots, \alpha_n\in o_K$ which are $F$-linear independent and $m\in o_F\setminus\{0\}$. Then $X={\bf X}\times_{o_F} F$ is a torsor of the norm one torus $G=R^{1}_{K/F}(\G_m)$.
Let $$n_\infty=\sum_{v\in \infty_F}(n_v-1)$$ where $n_v$ is the number of places of $K$ above $v$. If ${\bf X}(o_F) \neq \emptyset$ and $n_\infty > 0$, then there exists a constant $c>0$ such that
$$N({\bf X}, T) \sim c(\log T)^{n_\infty}$$
as $T\rightarrow \infty$ by the proof of generalized Dirichlet Unit Theorem (see Theorem 5.12 of Chapter 5 in \cite{PR94}).  We will explain how to determine the above constant $c$ explicitly.

\medskip

Let $\{\sigma_1,\cdots,\sigma_n\}$ be the set of all embedding of $K$ over $F$ and $\Delta=det(\sigma_i(\alpha_j))_{n\times n}$.

\begin{lem} \label{lem:d-form} Let $$ u= \sum_{j=1}^n \alpha_j x_j \ \ \ \text{and} \ \ \ \sigma_i(u)= \sum_{j=1}^n \sigma_i(\alpha_j) x_j  $$ for $1\leq i\leq n$.  Then $$\omega=m^{-1}\Delta^{-1}\sigma_1(u) \cdot d\sigma_2(u)\wedge d\sigma_3(u)\wedge\cdots \wedge d\sigma_n(u)$$ is an invariant differential form of $X$.
\end{lem}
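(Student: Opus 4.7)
The plan is to identify $\omega$ as the Poincar\'e residue of the standard volume form $dx_1\wedge\cdots\wedge dx_n$ on the ambient affine space $\Spec F[x_1,\ldots,x_n]\cong R_{K/F}(\mathbb A^1)$, along the hypersurface cut out by $N_{K/F}(u)-m$. Since both of these data---the volume form and the defining polynomial---are $F$-rational and invariant under the natural $G$-action (for $G=R^1_{K/F}(\G_m)$ acting on $R_{K/F}(\mathbb A^1)$ by multiplication in $K$), the resulting residue is automatically an $F$-rational, $G$-invariant differential form on $X$.

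First, I would use the linear change of variables from $(x_1,\ldots,x_n)$ to $(\sigma_1(u),\ldots,\sigma_n(u))$. Its matrix is $(\sigma_i(\alpha_j))$, whose determinant is $\Delta$, so
$$d\sigma_1(u)\wedge d\sigma_2(u)\wedge\cdots\wedge d\sigma_n(u)\;=\;\Delta\cdot dx_1\wedge\cdots\wedge dx_n.$$
Differentiating $N_{K/F}(u)=\prod_i\sigma_i(u)$ and specialising to $X$, where $N_{K/F}(u)=m$, yields
$$dN_{K/F}(u)\;=\;\sum_{i=1}^n\frac{m}{\sigma_i(u)}\,d\sigma_i(u).$$
Wedging this against $d\sigma_2(u)\wedge\cdots\wedge d\sigma_n(u)$ kills every term except $i=1$, and combining with the previous identity gives
$$dN_{K/F}(u)\wedge d\sigma_2(u)\wedge\cdots\wedge d\sigma_n(u)\;=\;\frac{m\Delta}{\sigma_1(u)}\,dx_1\wedge\cdots\wedge dx_n.$$
Rearranging, $dx_1\wedge\cdots\wedge dx_n=dN_{K/F}(u)\wedge\omega$ on $X$, which identifies $\omega$ with the Poincar\'e residue.

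Finally, for invariance: for $g\in G$ the action $u\mapsto gu$ on $X$ is multiplication by $g$ viewed as an $F$-linear endomorphism of $K$, whose determinant equals $N_{K/F}(g)=1$, so $dx_1\wedge\cdots\wedge dx_n$ is $G$-invariant; and $N_{K/F}(gu)=N_{K/F}(u)$ implies $dN_{K/F}(u)$ is $G$-invariant; therefore so is their Poincar\'e quotient $\omega$. Likewise $F$-rationality comes for free from the residue construction. The only subtlety worth verifying by hand is that the asymmetric appearance of $\sigma_1$ is correctly compensated by $\Delta^{-1}$: for $\tau\in\Gal(\overline F/F)$ inducing a permutation $\pi$ on $\{\sigma_1,\ldots,\sigma_n\}$, one has $\tau(\Delta)=\mathrm{sgn}(\pi)\Delta$, and the on-$X$ relation $\sum_i d\sigma_i(u)/\sigma_i(u)=0$ supplies the matching sign when one transforms $\sigma_{\pi(1)}(u)\cdot d\sigma_{\pi(2)}(u)\wedge\cdots\wedge d\sigma_{\pi(n)}(u)$ back to $\sigma_1(u)\cdot d\sigma_2(u)\wedge\cdots\wedge d\sigma_n(u)$. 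I do not foresee any serious obstacle beyond this bookkeeping.
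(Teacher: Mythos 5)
Your argument is correct, but it takes a genuinely different route from the paper's. You characterize $\omega$ abstractly as the Gelfand--Leray/Poincar\'e residue of the ambient volume form, i.e.\ as the unique $(n-1)$-form on $X$ with $dN_{K/F}(u)\wedge\omega = dx_1\wedge\cdots\wedge dx_n$ along $X$ (your wedge computation correctly produces the factor $m\Delta/\sigma_1(u)$, and $dN\neq 0$ on $X$ since all $\sigma_i(u)\neq 0$ there), and then both $G$-invariance and $F$-rationality fall out of uniqueness, because the ambient volume form and the norm polynomial are preserved by the multiplication action (determinant $N_{K/F}(g)=1$) and by $\Gal(\overline F/F)$. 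In particular your closing worry about the sign bookkeeping under a Galois permutation of the $\sigma_i$ is already subsumed by the uniqueness argument and needs no separate verification. The paper instead splits the claim in two: it first rewrites $\omega$ as $\Delta^{-1}\,\frac{d\sigma_2(u)}{\sigma_2(u)}\wedge\cdots\wedge\frac{d\sigma_n(u)}{\sigma_n(u)}$ using $\prod_i\sigma_i(u)=m$, which makes invariance over the splitting field evident from the logarithmic (translation-invariant) coordinates, and then establishes $F$-rationality by an explicit cofactor computation eliminating $dx_1$ via $\sum_i \tr_{K/F}(\alpha_i/u)\,dx_i=0$, arriving at the $F$-rational expression $\omega=m^{-1}\tr_{K/F}(\alpha_1/u)^{-1}\,dx_2\wedge\cdots\wedge dx_n$. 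Your approach is shorter and more conceptual, and it identifies $\omega$ with the standard Leray form attached to the affine hypersurface $N_{K/F}(u)=m$, which is the natural normalization for the Tamagawa-measure matching; the paper's computation buys an explicit $F$-rational closed formula for $\omega$ in the coordinates $x_2,\dots,x_n$, which is convenient for the subsequent archimedean volume computations.
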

\begin{proof} Since $$\omega=\Delta^{-1} \cdot \frac{d\sigma_2(u)}{\sigma_2(u)}\wedge \cdots \wedge \frac{d\sigma_n(u)}{\sigma_n(u)}, $$ one has that $\omega$ is an invariant differential form of $X\times_F K$.

Let $M_{j}$ be the minor for entry $\sigma_1(\alpha_{j})$ in $(\sigma_i(\alpha_j))_{n\times n}$. Then we have
\begin{equation}\label{eq:ome} d\sigma_2(u)\wedge\cdots \wedge d\sigma_n(u)=\sum_{j=1}^{n}M_j dx_1\wedge\cdots \wedge \widehat{dx_j} \wedge\cdots \wedge dx_n.\end{equation}
 Since $$ \sum_{i=1}^n \frac{d\sigma_i(u)}{\sigma_i(u)}=0 \ \  \Rightarrow  \ \ \sum_{i=1}^n \tr_{K/F}(\alpha_i/u)\cdot dx_i=0, $$ one obtains that  \begin{equation}\label{eq:dx} dx_1=-\tr_{K/F}(\alpha_1/u)^{-1}(\sum_{i=2}^n \tr_{K/F}(\alpha_i/u)\cdot dx_i).\end{equation}
Replacing  $dx_1$ in (\ref{eq:ome}) with (\ref{eq:dx}),  we have
$$\aligned & d\sigma_2(u)\wedge\cdots \wedge d\sigma_n(u) \\
= & \tr_{K/F}(\alpha_1/u)^{-1} (\sum_{j=1}^{n}(-1)^{1+j}M_j \tr_{K/F}(\alpha_j/u)) \cdot dx_2\wedge\cdots \wedge dx_n.
\endaligned $$
 Since $$\sum_{j=1}^{n}(-1)^{1+j}M_j\cdot \sigma_1(\alpha_j)=\Delta \ \ \ \text{and} \ \ \ \sum_{j=1}^{n}(-1)^{1+j}M_j\cdot \sigma_i(\alpha_j)=0$$ for $2\leq i \leq n$, one obtains
  $$\aligned &  \tr_{K/F}(\alpha_1/u)^{-1} \sum_{j=1}^{n}(-1)^{1+j}M_j\tr_{K/F}(\alpha_j/u)\\
= & \tr_{K/F}(\alpha_1/u)^{-1}(\sum_{j=1}^{n}(-1)^{1+j}M_j\cdot\sigma_1(\alpha_j))\sigma_1(u)^{-1} \\
& +\sum_{i=2}^n (\sum_{j=1}^{n}(-1)^{1+j}M_j\cdot \sigma_i(\alpha_j))\sigma_i(u)^{-1} \\
= & \tr_{K/F}(\alpha_1/u)^{-1}  \sigma_1(u)^{-1} \Delta.
\endaligned $$
Hence $$d\sigma_2(u)\wedge\cdots \wedge d\sigma_n(u)=\tr_{K/F}(\alpha_1/u)^{-1}\sigma_1(u)^{-1} \Delta \cdot dx_2\wedge\cdots \wedge dx_n$$ and $$\omega=m^{-1} \tr_{K/F}(\alpha_1/u)^{-1}\cdot dx_2\wedge\cdots \wedge dx_n . $$ This implies that $\omega$ is a differential form of $X$.
\end{proof}

We will use this volume form $\omega$ to calculate $d_{\infty_F} (X(F_{\infty}, T))$.

\begin{lem}\label{lem:vol}(1) Suppose $v$ is a real place.

If there are $r\geq 1$ real places and $s$ complex places of $K$ over $v$, then $$d_v (X(F_v, T))\sim 2^{r-1}(2\pi)^s\frac{n^{r+s-1}}{(r+s-1)!}|N_{K/F}(\Delta)|_v^{-1}\cdot (\log T)^{r+s-1} $$ as $T\rightarrow \infty$.

If all places over $v$ are complex, then
$$d_v (X(F_v, T))\sim (2\pi)^{s-1}\frac{n^{s-1}}{(s-1)!}|N_{K/F}(\Delta)|_v^{-1}\cdot (\log T)^{s-1}$$ as $T\rightarrow \infty$ with $s=\frac{n}{2}$.

(2) Suppose $v$ be a complex place. Then $$d_v (X(F_v, T))\sim (2\pi)^{n-1}\frac{n^{n-1}}{(n-1)!}|N_{K/F}(\Delta)|_v^{-1} \cdot (\log T)^{n-1} $$ as $T\rightarrow \infty$.
\end{lem}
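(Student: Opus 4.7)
The plan is a direct computation of $d_v(X(F_v, T))$ in logarithmic/polar coordinates on $(K\otimes_F F_v)^\times \cong \prod_{w\mid v} K_w^\times$, in the spirit of the classical proof of Dirichlet's unit theorem. By Lemma \ref{lem:d-form} one may rewrite the invariant form on $X$ as $\omega = \Delta^{-1} \bigwedge_{i=2}^n d\sigma_i(u)/\sigma_i(u)$, which in multiplicative coordinates factors through the $(\log|\cdot|,\arg)$ decomposition and converts the norm equation $N_{K/F}(u)=m$ into a single linear constraint on the log-radii (together with a constraint on $\sum\theta_w$ when $v$ itself is complex).

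Concretely, I would set $u_w = \epsilon_w e^{t_w}$ with $\epsilon_w\in\{\pm1\}$ for real $w$, and $u_w = e^{t_w+i\theta_w}$ for complex $w$. Using $|d\log u|_v = dt$ for each real embedding and $|d\log z \wedge d\log\bar z|_v = 2\,dt\,d\theta$ for each complete complex conjugate pair, I would express $|\omega|_v = |\Delta|_v^{-1}\cdot 2^{c}\cdot (\text{wedge of }dt_w\text{'s and }d\theta_w\text{'s})$, where $c$ counts the complete complex pairs surviving in the wedge. The bound $|x|_\infty\leq T$ translates into $|u_w|_w \leq T$ up to a multiplicative constant $O(1)$ (absorbed into lower-order error), i.e.\ $t_w\leq \log T + O(1)$. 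After the substitution $a_w = \log T - t_w \geq 0$, the region cut out by these bounds together with the log-radius constraint becomes the simplex
$$\{a_w\geq 0 : \textstyle\sum_{w\text{ real}} a_w + 2\sum_{w\text{ complex}} a_w \;\leq\; n\log T + O(1)\},$$
whose volume is asymptotic to $n^{d-1}(\log T)^{d-1}/(d-1)!$ times a Jacobian factor $2^{-\#\text{complex}}$, with $d=r+s$ for $v$ real and $d=n$ for $v$ complex.

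Multiplying the radial volume by the angular integration (contributing $(2\pi)^{\#\text{complex}}$, reduced by one factor of $2\pi$ when $v$ is complex because $\sum\theta_w\equiv\arg m$ is then constrained) and by the sign sum $2^{r-1}$ (in case (1), from $\prod_w\epsilon_w = \mathrm{sgn}(m)$) produces the three stated asymptotics. The slight subtlety in case (1)(b) is that the embedding $\sigma_1$ omitted from the wedge is complex, so its conjugate $\bar\sigma_1$ appears unpaired in $\omega$; after substituting the norm relation $dt_1 = -\sum_{j\geq 2}dt_j$, the unpaired $dt_1$-piece collapses against the already-present $dt_j$'s (by $dt_j\wedge dt_j = 0$) and only a single $d\theta_1$-factor survives, leaving $s-1$ complete complex pairs. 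The main technical difficulty throughout is the careful bookkeeping of the powers of $2$ and $2\pi$ arising from the Tamagawa conventions at real versus complex places, and the identification of $|\Delta|_v^{-1}$ with $|N_{K/F}(\Delta)|_v^{-1}$ in the normalisation the paper has adopted.
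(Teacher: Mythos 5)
Your proposal follows essentially the same route as the paper's proof: change coordinates to the embeddings $z_i=\sigma_i(u)$, compare the image of the box $[-T,T]^n$ with boxes in the new coordinates (the paper does this via an explicit sandwich $B(\delta_2)\subset B'\subset B(\delta_1)$ and the observation that the leading term is independent of $\delta$), pass to polar/logarithmic coordinates, count the $2^{r-1}$ sign components and the $(2\pi)$-factors from the angles, and reduce the radial part to the volume $n^{d-1}/(d-1)!$ of a simplex. The paper only carries out case (1) with $r>0$ in detail and asserts the remaining cases are identical, so your explicit treatment of the unpaired conjugate embedding in case (1)(b) is a welcome elaboration rather than a departure.
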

\begin{proof} We only prove the case that $v$ be a real place and $r>0$. The rest of cases follows from the exact the same arguments. Assume $\sigma_1,\cdots,\sigma_r$ are real embedding and $\sigma_{r+1},\bar \sigma_{r+1},\cdots,\sigma_{r+s},\bar \sigma_{r+s}$ are complex embedding. Let $B=[-1,1]^n\subset \R^n$. By changing  coordinate $z_i=\sigma_i(u)$ for $1\leq i \leq r+s$ with $\R^n\rightarrow \R^r\times \C^s$, one has
 $$d_v (X(F_v, T))=\int_{T\cdot B\cap X(F_v)} |\omega|
=\int_{T\cdot B'\cap X(F_v)} |\omega'| $$
where $$\omega'=|N_{K/F}(\Delta)|_v^{-1}\frac{dz_2}{|z_2|}\wedge \cdots\wedge \frac{dz_r}{|z_r|}\wedge \frac{dz_{r+1}}{|z_{r+1}|}\wedge \frac{d\bar z_{r+1}}{|\bar z_{r+1}|}\wedge \cdots \wedge \frac{dz_{r+s}}{|z_{r+s}|}\wedge \frac{d\bar z_{r+s}}{|\bar z_{r+s}|}$$
 and $B'$ is the image of $B$ in $\R^r\times \C^s$ under this change coordinate.

For any $\delta >0$, we define $$B(\delta)=\{(x_1,\cdots,x_r,x_{r+1}, \cdots , x_{r+s})\in \R^r\times \C^s: \ |x_i|\leq \delta \ \text{for all $1\leq i\leq r+s$} \}. $$  Since
there exist $\delta_1>\delta_2 >0$ such that $B(\delta_2)\subset B'\subset B(\delta_1)$
 and $$\int_{ B(T\cdot\delta_2)\cap X(F_v)}|\omega'| \leq d_v (X(F_v, T))\leq \int_{ B(T\cdot\delta_1)\cap X(F_v)}|\omega'| ,$$
one only needs to show that
$$\int_{ B(T\cdot\delta)\cap X(F_v)}|\omega'|\sim 2^{r-1}(2\pi)^s\frac{n^{r+s-1}}{(r+s-1)!}|N_{K/F}(\Delta)|_v^{-1}\cdot (\log T)^{r+s-1}$$ as $T\rightarrow \infty$ for any $\delta>0$.

Since $B(T\cdot \delta)\cap X(F_v)$ has $2^{r-1}$ connected components and the integral over each connected component is the same. Therefore
$$\int_{B(T\cdot \delta)\cap X(F_v)}|\omega'|=2^{r-1} \int_{B_{X,T}}|\omega'| $$
where $B_{X,T}$ is the connected component of $ B(T\cdot\delta)\cap X(F_v)$ such that the first $r$-real coordinates are positive.

By using the the polar coordinates $$\begin{cases} z_i = e^{\rho_i} \ \ \ & \text{ for $i=1,\cdots,r$ } \\
z_j= e^{\rho_j+{\bf i} \theta_j} \ \ \ & \text{ with $0\leq \theta_j < 2\pi$ for $j=r+1, \cdots r+s$},  \end{cases}$$
one obtains that
$$\int_{B_{X,T}}|\omega'|= |N_{K/F}(\Delta)|_v^{-1} (2\pi)^s \int_{V(T)} (\bigwedge_{i=2}^r d\rho_i)\wedge (\bigwedge_{j=1}^s d (2\rho_{r+j}))  $$
where
$$V(T)=\{ (\rho_i)\in (-\infty, \ \log (T\cdot\delta)]^{r+s}: \  \sum_{i=1}^r\rho_i+\sum_{j=1}^s 2\rho_{r+j}=\log |m|_v \} . $$
By substituting $\rho_i$ by $\rho_i \log T $ for $1\leq i\leq r+s$, one has
$$\int_{V(T)} (\bigwedge_{i=2}^r d\rho_i)\wedge (\bigwedge_{j=1}^s d (2\rho_{r+j})) \sim (\log T)^{s+r-1} \int_{V} (\bigwedge_{i=2}^r d\rho_i)\wedge (\bigwedge_{j=1}^s d (2\rho_{r+j})) $$
as $T \rightarrow \infty$, where
 $$ \aligned V & =\{(\rho_i)\in (-\infty,1]^{r+s}: \ \sum_{i=1}^r\rho_i+\sum_{j=1}^s 2\rho_{r+j}=0 \}\\
 & = \{(u_i)\in (-\infty, 0]^{r+s}: \ \sum_{i=1}^r u_i + \sum_{j=1}^s 2u_{r+j}= -n \} \\
  & = \{ (u_2, \cdots, u_{r+s})\in (-\infty, 0]^{r+s-1}: \sum_{i=2}^r u_i+\sum_{j=1}^s 2u_{r+j} \geq -n \}  \endaligned $$ with $u_i=\rho_i-1$ for $i=1, \cdots, r+s$. Therefore
$$ \int_{V} (\bigwedge_{i=2}^r d\rho_i)\wedge (\bigwedge_{j=1}^s d (2\rho_{r+j})) = n^{r+s-1}/(r+s-1)! $$ by the standard computation. The proof is complete.
\end{proof}

 If $F=\Q$ and $\alpha_1,\cdots, \alpha_n\in o_K$ are linear independent over $\Bbb Q$ such that $$L=\Bbb Z \alpha_1+\cdots + \Bbb Z \alpha_n$$ is an order of $K$, one can associate the narrow ring class field $H_L$
corresponding to the order $L$ with the Artin reciprocity isomorphism
 \begin{equation} \label{artin} \psi_{H_L/K}:  \  \Bbb I_K/ K^\times (K_\infty^+ \prod_{p< \infty}
L_p^\times)  \xrightarrow{\cong}  \Gal(H_L/K) \end{equation} where $K_\infty^+$ is the connected component of $1$ inside $K_\infty$,
$L_p$ is the $p$-adic completion of $L$ inside $K_p=K\otimes_\Bbb Q \Bbb Q_p$ and $L_p^\times$ is the unit group of $L_p$ for any prime $p$.

Let $$G=R^1_{K/\Q}(\G_m) \ \ \ \text{ and } \ \ \ {\bf G}(\Z_p)=\{\xi \in L_ p^\times: \ N_{K_p/\Q_p}(\xi)=1\} $$ for any prime $p$.
Set $G(\Bbb R)^+$ to be the connected component of $1$ inside
 $$ G(\Bbb R) = \{ x\in K_\infty^\times: \ N_{K_\infty/\Bbb R}(x)=1 \} . $$
The homomorphism induced by the natural inclusion
\begin{equation} \label{natural} \lambda_K: G(\Bbb A_\Q)/G(\Q)(G(\Bbb R)^+ \prod_{p< \infty}\bold G(\Z_p))
\longrightarrow \Bbb I_K/ K^\times (K_\infty^+ \prod_{p< \infty}
L_p^\times) \end{equation}  is well-defined. Moreover, if $x_\Bbb A\in ker(\lambda_K)$, then there are $$\alpha\in K^\times \ \ \text{ and } \ \ y_\Bbb A\in K_\infty^+ \prod_{p< \infty}
L_p^\times $$ such that $x_\Bbb A=\alpha \cdot y_\Bbb A$. Therefore
$$ N_{K/\Bbb Q}(\alpha^{-1})=N_{K/\Bbb Q}(\alpha^{-1} x_\Bbb A)=N_{K/\Bbb Q}(y_\Bbb A)\in \Bbb Q\cap (\Bbb R^+\times \prod_{p} \Bbb Z_p^\times )=\{1\} $$ where $\Bbb R^+$ is the set of positive reals. This implies that $\lambda_K$ is injective.

Consider the Diophantine equation (\ref{normequ}) $\bf X$ over $\Z$. Then ${\bf X}(\Z)\neq \emptyset$ if and only if there is
$$(x_{1,p},\cdots,x_{n,p})_{p\leq \infty}\in \prod_{p\leq \infty} \bold X(\Bbb
Z_p) $$ such that $$\psi_{H_L/K}(
( \sum_{i=1}^n  \alpha_i x_{i,p})_{p\leq \infty})=1 $$ where
$$\sum_{i=1}^n  \alpha_i x_{i,p}\in K_p=K\otimes_{\Q} \Q_p $$
by Corollary 1.6 in \cite{WX}.

\begin{lem} \label{lem:ext2} Let $K/\Q$ be a finite extension of degree $n$ and $\bf X$ be the scheme defined by the norm equation
$$ N_{K/\Q}(\alpha_1 x_1+\cdots+\alpha_n x_n)=m $$
with $\alpha_1, \cdots, \alpha_n \in o_K$ such that $L=\Z \alpha_1+\cdots + \Z \alpha_n$ is an order of $K$ and $m\in \Z$ with $m\neq 0$. If $X={\bf X}\times_{\Z} \Q$ is a trivial torsor of $G=R_{K/\Q}^1(\G_m)$, then
$$ N({\bf X}, T) \sim \frac{h_G}{h_L \cdot \tau(G)}\sum_{\phi} \prod_{p \ \text{primes}}  N_p({\bf X}, \phi) N_\infty(X, T, \phi)$$ as $T\rightarrow \infty$, runs over all characters of $\Gal(H_L/K)$ and
$$h_G=[G(\Bbb A_\Q):G(\Q)(G(\Bbb R)^+ \prod_{p< \infty}\bold G(\Z_p))] \ \ \ \text{and} \ \ \ h_L=[\Bbb I_K: K^\times (K_\infty^+ \prod_{p< \infty}L_p^\times)] $$ with the above notation and $\tau(G)$ is the Tamagawa number of $G$ and
$$ N_p({\bf X}, \phi)= \int_{{\bf X}(\Z_p)} \phi( \psi_{H_L/K}(\sum_{i=1}^n \alpha_i x_{i,p})) d_p(x_{1,p},\cdots, x_{n,p}) $$ for all primes $p$ and
$$ N_\infty(X,T, \phi)= \int_{X(\R, T)} \phi( \psi_{H_L/K}(\sum_{i=1}^n \alpha_i x_{i,\infty})) d_\infty(x_{1,\infty},\cdots, x_{n,\infty}) $$ with the Artin map $\psi_{H_L/K}$ in (\ref{artin}).
\end{lem}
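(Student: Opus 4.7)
The strategy is to apply Lemma \ref{lem:change-st} with the open subgroup
$$U=G(\Bbb R)^+\prod_{p<\infty}{\bf G}(\Z_p)$$
already used in the definition of $h_G$, and then translate characters of $G(\Bbb A_\Q)/G(\Q)U$ into characters of $\Gal(H_L/K)$ using the injection $\lambda_K$ of (\ref{natural}) and the Artin isomorphism $\psi_{H_L/K}$ of (\ref{artin}). First I would check that $U$ is an open subgroup of $St({\bf X})$: each ${\bf G}(\Z_p)\subset L_p^\times$ acts on $L_p$ by multiplication, and since multiplication by a unit of $L_p$ of norm $1$ preserves both membership in $L_p$ and the norm equation $N_{K/\Q}(u)=m$, we have ${\bf G}(\Z_p)\subset St({\bf X}(\Z_p))$; at infinity $G(\Bbb R)^+\subset G(\Bbb R)=St({\bf X}(\R))$ trivially. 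Lemma \ref{lem:change-st} (noting $G=R^1_{K/\Q}(\G_m)$ is anisotropic over $\Q$) then yields
$$N({\bf X}, T) \sim \frac{1}{\tau(G)}\sum_{\varrho}\Bigl(\prod_{p<\infty}N_p({\bf X},\overline{\varrho}_p)\Bigr)\,N_\infty(X,T,\overline{\varrho}_\infty),$$
where $\varrho$ ranges over all characters of $G(\Bbb A_\Q)/G(\Q)U$.

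Next I would pass from characters $\varrho$ of $G(\Bbb A_\Q)/G(\Q)U$ to characters $\phi$ of $\Gal(H_L/K)$. Because $\lambda_K$ is injective and its image has index $h_L/h_G$ in $\Bbb I_K/K^\times(K_\infty^+\prod L_p^\times)\cong\Gal(H_L/K)$, the Pontryagin dual map $\phi\mapsto\varrho_\phi:=\phi\circ\psi_{H_L/K}\circ\lambda_K$ is surjective on character groups with kernel of order $h_L/h_G$. Hence every $\varrho$ arises from some $\phi$, and
$$\sum_{\varrho}F(\varrho)\;=\;\frac{h_G}{h_L}\sum_{\phi}F(\varrho_\phi)$$
for any function $F$ on the character group of $G(\Bbb A_\Q)/G(\Q)U$.

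The step that needs the most care is matching the integrands. Fix a rational point $P\in X(\Q)$; then $p:G\to X$ sends $g\mapsto P\cdot g$, so locally $p^{-1}(x_p)=P^{-1}x_p$, and by (\ref{chara-ani})
$$\overline{\varrho}_p(x_p)=\varrho_{F_p}(P^{-1}x_p)=\phi\bigl(\psi_{H_L/K,p}(P^{-1})\bigr)\cdot\phi\bigl(\psi_{H_L/K,p}(\textstyle\sum_i\alpha_ix_{i,p})\bigr).$$
The local factors $c_p:=\phi(\psi_{H_L/K,p}(P^{-1}))$ do not individually equal $1$, so the identity $N_p({\bf X},\overline{\varrho}_p)=N_p({\bf X},\phi)$ fails at each prime; however, because $P^{-1}\in K^\times$ is a principal idele, global Artin reciprocity gives $\prod_{v}c_v=\phi(\psi_{H_L/K}(P^{-1}))=1$. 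Therefore the full product
$$\prod_{p<\infty}N_p({\bf X},\overline{\varrho}_p)\cdot N_\infty(X,T,\overline{\varrho}_\infty)=\prod_{p<\infty}N_p({\bf X},\phi)\cdot N_\infty(X,T,\phi)$$
holds for each $\phi$. Combining this with the character count of the previous paragraph produces exactly the factor $\frac{h_G}{h_L\cdot\tau(G)}$ in front of $\sum_\phi\prod_p N_p({\bf X},\phi)\cdot N_\infty(X,T,\phi)$, proving the lemma.

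The main obstacle is the careful bookkeeping in this last paragraph: the identification $\overline{\varrho}_p\leftrightarrow\phi\circ\psi_{H_L/K,p}$ is only correct up to the place-wise twists $c_p$ coming from the rational base point $P$, and one must invoke the product formula for $\phi\circ\psi_{H_L/K}$ on principal ideles to see that these twists conspire to cancel globally; with that observation the result is essentially a translation between two pictures of the same local-global duality.
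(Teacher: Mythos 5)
Your proof is correct and takes essentially the same route as the paper: apply Lemma \ref{lem:change-st} with $U=G(\Bbb R)^+\prod_{p<\infty}{\bf G}(\Z_p)$, pull characters of $\Gal(H_L/K)$ back through the injective map $\lambda_K$ (each $\varrho$ arising from exactly $h_L/h_G$ characters $\phi$), and cancel the base-point twists $c_v$ coming from $(\varsigma_1,\dots,\varsigma_n)$ by Artin reciprocity on the principal idele. The paper's two-line proof is a compressed version of exactly this bookkeeping.
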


\begin{proof} Fix $(\varsigma_1, \cdots, \varsigma_n)\in X(\Q)$. One can apply Lemma \ref{lem:change-st} by using
$$\overline{\varrho}_p (x_{1,p}, \cdots, x_{n,p}) = \varrho_{F_v} ((\sum_{i=1}^n \alpha_i x_{i,p})(\sum_{i=1}^n \alpha_i \varsigma_i)^{-1})$$
for any $(x_{1,p}, \cdots, x_{n,p})\in X(\Q_p)$ with all primes $p\leq \infty$.

Since the natural inclusion $\lambda_K$ in (\ref{natural}) is injective, one concludes that
$$ N({\bf X}, T) \sim \frac{h_G}{h_L \cdot \tau(G)}\sum_{\phi} \prod_{p\leq \infty}  N_p({\bf X}, \phi) N_\infty(X, T, \phi)$$ as $T\rightarrow \infty$, where $\phi$ runs over all characters of $\Gal(H_L/K)$  by the Artin reciprocity law (\ref{artin}). \end{proof}

We further specialize to the case that $K/\Q$ is a real quadratic extension with the discriminant $D$ and $L=\Bbb Z \alpha_1+\Bbb Z \alpha_2$ is the maximal order $o_K$ of $K$. Let $P(D)$ be the set of all prime divisors of $D$ and $$ m=(-1)^s p_1^{e_1}\cdots p_f^{e_f}\prod_{q_i\in P(D)}q_i^{t_i}$$  where $p_1,\cdots,p_f$ are the distinct primes and prime to $D$. Let $$Q_1=\{p_i: \ (\frac{D}{p_i})=1  \ \text{with} \ 1\leq i\leq f\}$$ and $$ Q_2=\{p_i: \
(\frac{D}{p_i})=-1  \ \text{with} \ 1\leq i\leq f \}. $$

Since $\Gal(K/\Bbb Q)$ acts on the two real places transitively, one real place is ramified in $H_L/K$ if and only if  the other real place is ramified in $H_L/K$.  Since the global element $-1$ in $K$ gives identity in $\Gal(H_L/K)$ via the Artin map, one obtains that the complex conjugations over these two real places are the same by using the product of the local Artin maps when a real place $v$ of $K$ is ramified in $H_L/K$. Let
$\sigma_{-1}$ be the complex conjugation if a real place of $K$ is ramified in $H_L/K$; otherwise $\sigma_{-1}$ is the identity in $\Gal(H_L/K)$.

If $p_i\in Q_1$, then $p_i$ splits into two primes $\frak p_1$ and $\frak p_2$ in $K$. Since the global element $p_i$ gives identity in $\Gal(H_L/K)$ via the Artin map, one obtains that the Frobenius of $\frak p_1$ and the Frobenius of $\frak p_2$ are inverse to each other in $\Gal(H_L/K)$ by using the product of the local Artin maps. Let $\sigma_{p_i}$ be the Frobenius of $\frak p_1$ in $\Gal(H_L/K)$. Then the Frobenius of $\frak p_2$ in $\Gal(H_L/K)$ is $\sigma_{p_i}^{-1}$. Define
$$\delta_{p_i}(\phi)=\begin{cases} (e_i+1)\phi (\sigma_{p_i})^{e_i} \ \ \
& \text{if }\phi (\sigma_{p_i}^2)=1 \\
(1-\phi(\sigma_{p_i})^{2e_i+2})(\phi(\sigma_{p_i})^{e_i}-\phi(\sigma_{p_i})^{e_i+2
})^{-1}  \ \ \ & \text{otherwise.}
\end{cases}$$ for any character $\phi$ of $\Gal(H_L/K)$. Then $\delta_{p_i}(\phi)$ is independent of the choice of the place $\frak p_1$ of $K$.

If $p_i\in Q_2$, then $p_i$ is inert in $K$. Then the Frobenius $\sigma_{p_i}$ of $p_i$ satisfies $$\sigma_{p_i}=\psi_{H_L/K,p_i}(p_i)=\prod_{v\neq p_i} \psi_{H_L/K,v}(p_i)^{-1}$$ by the reciprocity law. Since $H_L/K$ is unramified at all finite places, we have $$\sigma_{p_i}=\prod_{v\mid \infty} \psi_{H_L/K,v}(p_i)^{-1}\cdot \prod_{v< \infty, v\neq p_i} \psi_{H_L/K,v}(p_i)^{-1}=1\cdot 1=1.$$

If $p_i\in P(D)$, the Frobenius of the unique place of $K$ over $p_i$ in $\Gal(H_L/K)$ is denoted by $\sigma_{p_i}$.

\begin{prop} \label{prop:pell-n}
 Suppose $K/\Q$ is a real quadratic extension and $L=\Bbb Z \alpha_1+\Bbb Z \alpha_2$ is the maximal order $o_K$ of $K$. With the above notation, we define
 $$c_m= \sum_{\phi}\phi(\sigma_{sgn(m)})\cdot \prod_{p_i\in P(D)}\phi(\sigma_{p_i})^{t_i}\cdot \prod_{p_i\in Q_1}\delta_{p_i}(\phi)  $$
 where $\phi$ runs over all characters of $\Gal(H_L/K)$ and
 $$\sigma_{sgn(m)}= \begin{cases} \sigma_{-1} \ \ \ & \text{if $m<0$} \\
 1 \ \ \ & \text{if $m>0$. } \end{cases} $$ Then the norm equation $$N_{K/\Q}(\alpha_1 x+ \alpha_2 y)=m$$ is solvable over $\Z$ if and only if this equation is solvable over $\Bbb Z_p$ for all primes $p\leq \infty$ and $c_m\neq 0$.
  Moreover
 $$N({\bf X}, T) \sim \frac{2c_m}{h_K^+ \sqrt{D}} \cdot \frac{\log T}{\log\epsilon} $$
as $T\rightarrow \infty$, where $\epsilon$ is the unique minimal unit of $o_K$ such that $\epsilon >1$ and $N_{K/\Q}(\epsilon)=1$ and $$h_K^+=[\Bbb I_K:K^\times (K_\infty^+ \prod_{v<\infty_K} o_{K_v}^\times )]$$ with the connected component $K_\infty^+$ of 1 in $K_\infty^\times$ is the narrow class number of $K$.
\end{prop}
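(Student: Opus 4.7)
The starting point is Lemma~\ref{lem:ext2} applied with $L=o_K$, so that $h_L=h_K^+$; Ono's theorem gives $\tau(G)=2$ for the norm-one torus of a quadratic extension. What remains is to compute the archimedean factor $N_\infty(X,T,\phi)$, the finite local integrals $N_p({\bf X},\phi)$, and the constant $h_G$, and to identify the regulator $\log\epsilon$ as it emerges. For the archimedean part, Lemma~\ref{lem:vol}(1) with $n=r=2$, $s=0$ gives $d_\infty(X(\R,T))\sim 4(\log T)/\sqrt{D}$ since $|\Delta|^2=\disc(o_K)=D$. The character $\phi\circ \psi_{H_L/K,\infty}$ depends only on the signs of $\sigma_1(u),\sigma_2(u)$, which are equal if $m>0$ and opposite if $m<0$; combined with the text's definition of $\sigma_{-1}$ as the common complex conjugation at the two real places of $K$, this gives $N_\infty(X,T,\phi)\sim \phi(\sigma_{\mathrm{sgn}(m)})\cdot 4(\log T)/\sqrt{D}$.

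\textbf{Finite local computations.} For $p\nmid mD$, $N_p({\bf X},\phi)$ reduces to a pure local volume that combines with the Tamagawa normalisation. For $p_i\in P(D)$ there is a unique place of $K$ above $p_i$, and the local Artin map contributes the factor $\phi(\sigma_{p_i})^{t_i}$. For an inert $p_i\in Q_2$, local solvability forces $e_i$ to be even and the text already shows $\sigma_{p_i}=1$, so the character contribution is trivial. The main case is $p_i\in Q_1$ split as $\mathfrak{p}_1\mathfrak{p}_2$: writing $o_{K_{p_i}}\cong \Z_{p_i}\times\Z_{p_i}$, a local integral solution has norm valuations $(k,e_i-k)$ with $0\le k\le e_i$, and each such pair contributes $\phi(\sigma_{p_i})^{k-(e_i-k)}$ because the Frobenius of $\mathfrak{p}_2$ is the inverse of that of $\mathfrak{p}_1$. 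Summing the finite geometric series
\begin{equation*}
\sum_{k=0}^{e_i}\phi(\sigma_{p_i})^{2k-e_i}
\end{equation*}
and separating the cases $\phi(\sigma_{p_i})^2=1$ and $\phi(\sigma_{p_i})^2\neq1$ recovers exactly $\delta_{p_i}(\phi)$.

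\textbf{Assembly, solvability, and main obstacle.} Multiplying the local factors, summing over the characters $\phi$ of $\Gal(H_L/K)$, and gathering all global constants yields
\begin{equation*}
N({\bf X},T)\sim \frac{2c_m}{h_K^+ \sqrt{D}}\cdot \frac{\log T}{\log\epsilon},
\end{equation*}
the regulator $\log\epsilon$ appearing because it measures $\lambda_\infty(\Gamma\backslash G(\R)^+)$ with $\Gamma=\epsilon^{\Z}$ the group of totally positive norm-one units. The solvability equivalence then follows at once from Lemma~\ref{lem:ext2}: local solvability at every $p\leq\infty$ makes each local integral nonzero, so ${\bf X}(\Z)\neq\emptyset$ if and only if the leading coefficient of $N({\bf X},T)$ is nonzero, equivalently $c_m\neq 0$. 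The main obstacle is not any individual character sum but rather the bookkeeping that collapses $h_G$, the good-prime Euler factors, $\tau(G)=2$, and the regulator into the clean constant $2/(h_K^+\sqrt{D}\log\epsilon)$; each ingredient is classical but their combination needs care. The split-prime sum giving $\delta_{p_i}(\phi)$ is, by contrast, elementary.
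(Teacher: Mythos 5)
Your overall route is the same as the paper's: apply Lemma~\ref{lem:ext2} with $L=o_K$ (so $h_L=h_K^+$), compute each local factor, and collapse the constants via the identity $\tau(G)=h_G\cdot\bigl(\prod_{p<\infty}\int_{{\bf G}(\Z_p)}d_p\bigr)\cdot\lambda_\infty({\bf G}(\Z)\backslash G(\R))$. Your finite local computations (ramified primes giving $\phi(\sigma_{p_i})^{t_i}$, inert primes trivial, the split-prime geometric sum giving $\delta_{p_i}(\phi)$) and the sign analysis at infinity all match the paper.

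There is, however, a concrete error in the archimedean volume, and it is not harmless because the whole point of the proposition is the explicit constant. Lemma~\ref{lem:vol}(1) with $F=\Q$, $n=r=2$, $s=0$ gives
\begin{equation*}
d_\infty\bigl(X(\R,T)\bigr)\sim 2\cdot\frac{2}{1!}\cdot\bigl|N_{K/\Q}(\Delta)\bigr|^{-1}\log T=\frac{4}{D}\log T,
\end{equation*}
since $N_{K/\Q}(\Delta)=\Delta\cdot\Delta^{\sigma}=-\Delta^2=-D$; the normalizing factor is $|N_{K/\Q}(\Delta)|^{-1}=1/D$, not $|\Delta|^{-1}=1/\sqrt{D}$ as you wrote. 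Your claimed $4(\log T)/\sqrt{D}$ is off by $\sqrt{D}$. The compensating $\sqrt{D}$ in the final answer comes from the fundamental-domain volume, which you leave unspecified: one must actually compute, using the invariant form of Lemma~\ref{lem:d-form}, that $\lambda_\infty({\bf G}(\Z)\backslash G(\R))=\tfrac{2}{\sqrt{D}}\log\epsilon$ (the factor $2/\sqrt{D}$ coming from $|\Delta|^{-1}=1/\sqrt{D}$ and the two connected components of $G(\R)$), not merely "$\log\epsilon$". With your stated values the assembly yields $\tfrac{4c_m}{h_K^+\sqrt{D}}\tfrac{\log T}{\log\epsilon}$ or $\tfrac{2c_m}{h_K^+}\tfrac{\log T}{\log\epsilon}$ depending on which normalization you intend, neither of which is the claimed constant. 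Fix both volumes and the bookkeeping closes as in the paper. One smaller gloss: for the solvability equivalence, the implication ${\bf X}(\Z)\neq\emptyset\Rightarrow c_m\neq0$ needs the fact that a single integral point generates a $\Gamma=\epsilon^{\Z}$-orbit of size $\asymp\log T$ (the generalized Dirichlet unit theorem / Proposition~\ref{arith}); the asymptotic with vanishing leading coefficient alone does not a priori exclude a nonempty but slowly growing ${\bf X}(\Z)$.
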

\begin{proof}
Let $\bf X$ be the scheme over $\Bbb Z$ defined by the above norm equation. Since
$$\prod_{p \leq \infty} {\bf X}(\Z_p) \neq \emptyset,  $$ one gets that $X={\bf X}\times_\Z \Q$ is a trivial torsor of $G=R_{K/\Q}^1(\G_m)$. In order to apply Lemma \ref{lem:ext2}, one needs to compute the integrals in each term.

If $(p,m)=1$, then $\psi_{H_L/K}$ takes the trivial value over ${\bf X}(\Z_p)$ and
$$N_p({\bf X}, \phi)= \int_{{\bf X}(\Z_p)}  d_p=\int_{{\bf G}(\Z_p)} d_p $$ for all character $\phi$ of $\Gal(H_L/K)$.

If $p_i\in P(D)$, then $$ \psi_{H_L/K}(\alpha_1 x_{p_i}+\alpha_2 y_{p_i})=\sigma_{p_i}^{t_i}$$ for all $(x_{p_i}, y_{p_i})\in {\bf X}(\Z_{p_i})$. Therefore
$$N_{p_i}({\bf X}, \phi)=  \phi(\sigma_{p_i})^{t_i}\int_{{\bf G}(\Z_{p_i})} d_{p_i}$$ for all character $\phi$ of $\Gal(H_L/K)$.

If $p_i\in Q_1$, then $p$ splits into two places $\frak p_1$ and $\frak p_2$ in $K/\Q$. Let $\sigma_{p_i}$ be the Frobenius of $H_L/K$ at $\frak p_1$. Therefore
$$\aligned & N_{p_i}({\bf X}, \phi) = \sum_{i=0}^{e_i} \int_{z_1z_2=m,v_{p_i}(z_1)=i} \phi(\sigma_{p_i}^i\cdot \sigma_{p_i}^{-(e_i-i)}) d_{p_i}\\
=& \sum_{i=0}^{e_i}\phi(\sigma_{p_i})^{2i-e_i} \int_{{\bf G}(\Z_p)}  d_{p_i}=\delta_{p_i}(\phi)\int_{{\bf G}(\Z_{p_i})}  d_{p_i}
\endaligned$$ for all character $\phi$ of $\Gal(H_L/K)$.

If $p=\infty$ and a real place over $\infty$ is ramified in $H_L/K$, then $$\psi_{H_L/K}(\alpha_1 x_\infty+\alpha_2 y_\infty)=\begin{cases} \sigma_{-1} \ \ \ & \text{when $\alpha_1 x_\infty+\alpha_2 y_\infty<0$ } \\
1 \ \ \ & \text{otherwise} \end{cases} $$
over such a real place. Therefore
$$N_\infty(X, T, \phi)= \begin{cases} \phi(\sigma_{-1}) d_\infty (X(\R, T)) \ \ \ \ & \text{if $m<0$} \\
 d_\infty (X(\R, T)) \ \ \ \ & \text{if $m>0$} \end{cases} $$
for all character $\phi$ of $\Gal(H_L/K)$.

Let $$h_G=[G(\Bbb A_\Q):G(\Q)(G(\Bbb R)^+ \prod_{p< \infty}\bold G(\Z_p))] $$ where $G(\R)^+$ is the connected component of 1. Then
 $$\tau (G)=h_G\prod_{p<\infty}\int_{{\bf G}(\Z_p)}  d_p  \cdot \int_{{\bf G}(\Z)\backslash G(\R)}d_\infty $$ where $\tau(G)$ is the Tamagawa number of $G$ and $${\bf G}(\Z)=G(\Q) \cap (G(\Bbb R)^+ \prod_{p< \infty}\bold G(\Z_p)) . $$
By using the invariant differential in Lemma \ref{lem:d-form}, one obtains that
 $$\int_{{\bf G}(\Z)\backslash G(\R)}d_\infty= \frac{2}{ \sqrt{D}} \log \epsilon $$ where $\epsilon$ is the unique minimal unit of $o_K$ satisfying with $\epsilon >1$ and $N_{K/\Q}(\epsilon)=1$. By Lemma \ref{lem:vol}, we have $$d_\infty (X(\R, T))\sim \frac{4}{D}\cdot \log T$$ as $T\rightarrow \infty$. The result follows from combining the above computation and Lemma \ref{lem:ext2} and Proposition \ref{arith}. \end{proof}

 By Proposition \ref{prop:pell-n}, the negative Pell equation $$x^2-\delta y^2=-1$$ where $\delta$ is a square-free positive integer with $\delta \not\equiv 1 \mod 4$ is solvable over $\Z$ if and only if $\sigma_{-1}$ is trivial in $\Gal(H_L/K)$, which is equivalent to that the narrow class number $h_K^+$ of $K$ is equal to the the class number $h_K$ of $K$. This is the well-known classical result. In this case, one has
$$N({\bf X}, T) \sim \delta^{-\frac{1}{2}} \frac{\log T}{\log \epsilon}$$ as $T\rightarrow \infty$ by Proposition \ref{prop:pell-n}, where
 $\epsilon=x_0+y_0\sqrt{\delta}$ with the integral solution $(x_0,y_0)$ of $x^2-\delta y^2=1$ such that $\epsilon >1$ and $\epsilon$ is minimal.

\medskip

Now we provide a more explicit example. For any integer $m$, one can write $$m=(-1)^{s_0}
2^{s_1}17^{s_2}{p_1}^{e_1}\cdots {p_g}^{e_g} \ \ \ \text{and} \ \ \ \Pi(m)=\{p_1,
\cdots, p_g \} . $$ Decompose $\Pi (m)$ into the disjoint union of the
following subsets
$$\aligned & \Pi_1=\{p\in \Pi (m) : \ (\frac{2}{p})=(\frac{17}{p})=-1  \} \ \ \text{and} \ \ \Pi_2=\{p\in \Pi(m): \
(\frac{34}{p})=-1 \}  \cr
 & \Pi_3= \{p\in \Pi (m): \
(\frac{2}{p})=(\frac{17}{p})=1 \text{ and } (\frac{-7+4\sqrt{2}}{p})=1\}\\
& \Pi_4= \{p\in \Pi (m): \
(\frac{2}{p})=(\frac{17}{p})=1 \text{ and } (\frac{-7+4\sqrt{2}}{p})=-1\}. \endaligned $$ Let
$$m_1=(-1)^{s_0} \prod_{p_i\in \Pi_1} p_i^{e_i} .$$

\begin{exa} \label{exa:pell-34} With the above notation, the equation $x^2-34y^2=m$ is solvable over $\Bbb Z$ if and
only if  $m_1\equiv \pm 1 \mod 8$, $(\frac{m_1}{17})=1$, $(\frac{34}{p_i})=1$ for odd $e_i$ and
\begin{enumerate}

    \item   there is an odd $e_i$ for some $p_i \in \Pi_1$. In this case
 $$N({\bf X}, T) \sim \frac{1}{2\sqrt{34}}\prod_{p_i\in \Pi(m)\setminus \Pi_2}(1+e_i)\cdot \frac{\log T}{\log (35+6\sqrt{34})}$$
 as $T\rightarrow \infty$.

    \item $\Pi_1\neq \emptyset$ and all $e_i$ are even for $p_i\in \Pi_1$. In this case
   $$N({\bf X}, T) \sim \frac{r}{2\sqrt{34}} \prod_{p_i\in \Pi_3\cup \Pi_4}(1+e_i)\cdot \frac{\log T}{\log (35+6\sqrt{34})}$$
as $T\rightarrow \infty$, where
$$ r= (-1)^{s_0+s_2+\frac{1}{2}\sum_{p_i\in \Pi_1}e_i+\sum_{p_i\in \Pi_4}e_i}+ \prod_{p_i\in \Pi_1}(1+e_i) .$$

    \item   $\Pi_1=\emptyset$ and $\sum_{p_i\in \Pi_4} e_i \equiv
    s_0+s_2 \mod 2 $. In this case
    $$N({\bf X}, T) \sim \frac{1}{\sqrt{34}}\prod_{p_i\in \Pi_3\cup \Pi_4}(1+e_i)\cdot \frac{\log T}{\log (35+6\sqrt{34})}$$ as $T\rightarrow \infty$.
\end{enumerate}
\end{exa}

\begin{proof} The narrow Hilbert class field $H_L$ of $K=\Q(\sqrt{34})$ is given by
$$H_L=K(\sqrt{-7+4\sqrt{2}}) \ \ \ \text{ with } \ \ \ \Gal(H_L/K)\cong \mu_4=\{\pm 1,\pm {\bf i}\}. $$
By the notation in Proposition \ref{prop:pell-n}, we have
$$\sigma_{p}= \begin{cases} 1 \ \ \ & \text{if } p\in \{2\}\cup \Pi_2\cup \Pi_3 \cr
-1 \ \ \ & \text{if $p=\{-1,17\}\cup \Pi_4$}\cr
\pm {\bf i} \ \ \ & \text{if
$p\in \Pi_1$}
\end{cases}$$
and $\epsilon=35+6\sqrt{34}$. Let $\phi$ be the generator of the character group of $\Gal(H_L/K)$.

If $\varphi=1 \text{ or }\phi^2$, then $$\delta_{p}(\varphi)=(1+e_i)\varphi(\sigma_{p_i})^{e_i}$$
for $p_i\in \Pi_1\cup \Pi_3\cup \Pi_4$.

If $\varphi=\phi \text{ or }\phi^3$, then $$\delta_{p}(\varphi)=\begin{cases} \frac{1}{2} ({\bf i}^{e_i}+(-{\bf i})^{e_i}) \ \ \ & \text{if  $p_i\in \Pi_1$} \\
(1+e_i)\varphi(\sigma_{p_i})^{e_i} \ \ \ & \text{if
$p_i\in \Pi_3\cup \Pi_4$}.
\end{cases}$$

Therefore $$\aligned c_m & =\sum_{\phi}\phi(\sigma_{-1})\prod_{p_i\in P(34)}\phi(\sigma_{p_i})^{e_i}\prod_{p_i\in \Pi(m)\setminus \Pi_2}\delta_{p_i}(\phi)\\
&=\prod_{p_i\in \Pi(m)\setminus \Pi_2}(1+e_i)+\prod_{p_i\in \Pi_1}(-1)^{e_i}(1+e_i)\cdot \prod_{p_i\in \Pi_3\cup \Pi_4}(1+e_i) \\
& +2(-1)^{s_0+s_2}\prod_{p_i\in \Pi_1}\frac{1}{2}({\bf i}^{e_i}+{(-\bf i)}^{e_i})\cdot \prod_{p_i\in \Pi_3}(1+e_i)\cdot\prod_{p_i\in \Pi_4}(-1)^{e_i}(1+e_i).
\endaligned$$
Since $(\frac{n_1}{17})=1$ by the local solvability condition, one has $$\prod_{p_i\in \Pi_1}(-1)^{e_i}=1 . $$

If there is an odd $e_i$ for some $p_i\in \Pi_1$, then $$c_m=2\prod_{p_i\in \Pi(m)\setminus \Pi_2}(1+e_i).$$

If all $e_i$ are even for all $p_i\in \Pi_1$ or $\Pi_1 =\emptyset$, then $$c_m=2\prod_{p_i\in \Pi(m)\setminus \Pi_2}(1+e_i)+2(-1)^{s_0+s_2+\frac{1}{2}\sum_{p_i\in \Pi_1}e_i + \sum_{p_i\in \Pi_4}e_i}\prod_{p_i\in \Pi_3\cup \Pi_4}(1+e_i).$$ The result follows from Proposition \ref{prop:pell-n} and Example 5.5 in \cite{WX}. \end{proof}

Comparing this example with Example 5.5 in \cite{WX}, one can find that both conditions for existence the integral points look slight different. This is because the $\bf X$-admissible subgroups in \cite[Theorem 1.10 ]{WX} are not unique. In other word, the finite subgroups of $Br(X)$ for testing the existence of the integral points are not unique.

\bigskip

\section{Examples for semi-simple groups} \label{sec.semisimpe}

As application, we will explain that the asymptotic formula of the number of integral solutions in Theorem 1.1 of \cite{EMS} is the same as  the Hardy-Littlewood expectation in sense of \cite{BR95} although the Brauer-Manin obstruction is not trivial in general.

\begin{exa} Let $p(\lambda)$ be an irreducible monic polynomial of degree $n\geq 2$ over $\Bbb Z$ and $\bf X$ be the scheme over $\Bbb Z$ defined by the following equations with variables $x_{i,j}$ for $1\leq i, j\leq n$
$$ det(\lambda I_n-(x_{i,j}))=p(\lambda) $$ and $X=\bf X\times_{\Bbb Z} \Bbb Q$. Then
$$N({\bf X}, T) \sim (\prod_{p<\infty} \int_{{\bf X}(\Bbb Z_p)} d_p)\cdot \int_{{\bf X}(\Bbb R, T)} d_\infty $$
as $T \rightarrow \infty$.
\end{exa}

\begin{proof}  Since both $SL_n$ and $GL_n$ act on $X$ by $ x\circ g = gxg^{-1}$, one has that $X$ is the homogeneous space of $SL_n$ and $GL_n$ with a rational point
$$v= \left(\begin{array} [c]{llll}
0 \ \  0 \ \cdots \ \ 0 \ \ -a_n  \\ 1 \ \ 0 \ \cdots  \ \ 0 \ \ -a_{n-1} \\
 \ \ \ \cdots \ \cdots \ \cdots \\
0 \ \ 0 \ \cdots \ \ 1 \ \ -a_1
\end{array}
\right) \in {X}(\Bbb Q)$$ where $p(\lambda)=\lambda^n+a_1 \lambda^{n-1} + \cdots + a_0$. Moreover, the stabilizers of $v$ inside $SL_n$ and $GL_n$ are $S\cong R_{K/\Bbb Q}^1 (\Bbb G_m)$ and $R_{K/\Bbb Q} (\Bbb G_m)$ respectively, where $K=\Bbb Q(\theta)$ and $\theta$ is a root of $p(\lambda)$. Then we have the following commutative diagram
\[
  \begin{CD}
    1 @>>> S @>>> SL_n @>>> X @>>> 1\\
    @. @V V  V @V V V @| @.\\
    1 @>>> R_{K/\Q}(\G_m) @>>> GL_n @>>> X @>>> 1
  \end{CD}
\]
For any field extension $k/\Bbb Q$, one obtains the following commutative diagram

\[
  \begin{CD}
           @. 1 @. 1\\
           @. @VVV @VVV \\
    1 @>>> S(k) @>>> SL_n(k) @>>> X(k) @>\delta_1>> H^1(k,S)\\
    @. @V  V V @V V V @| @.\\
    1 @>>> (k\otimes_\Q K)^\times @>>> GL_n(k) @>>> X(k) @>>> 1\\
    @. @V  V N_{K/\Q} V @V V det V @.\\
    1 @>>> k^\times @= k^\times \\
    @. @V V\delta_2 V @V V V @.\\
    @. H^1(k,S) @. 1
  \end{CD}
\]
by Galois cohomology, Shapiro's Lemma and Hilbert 90.

 For any $x\in X(k)$, there is a $g\in GL_n(k)$ such that $gvg^{-1}=x$ by the second line of the above diagram. Let $y\in R_{K/\Q}(\G_m)(\bar k)$ such that $N_{K/\Q}(y)=\det(g)$. Then $gy^{-1}\in SL_n(\bar k)$ and $$(gy^{-1})v (gy^{-1})^{-1}=g(y^{-1} v y)g^{-1}=gvg^{-1}=x$$ and $$\delta_1(v)=\sigma(y^{-1}g)(y^{-1}g)^{-1}=\sigma(y^{-1})\sigma(g)g^{-1}y=\sigma(y^{-1})y=\delta_2(\det(g)^{-1}).$$
Therefore
   \begin{equation} \label{partial}  \aligned \delta_1: \  X(k)  & \longrightarrow H^1(k,S) \cong k^\times/N_{K/k}((K\otimes_\Bbb Q k)^\times) \\ x & \mapsto det(g)^{-1}\cdot N_{K/k}((K\otimes_\Bbb Q k)^\times) \endaligned \end{equation}

\medskip

Let $L$ be the Galois closure of $K/\Bbb Q$, $\Lambda=\Gal(L/\Bbb Q)$ and $\Upsilon=\Gal(L/K)$. Write $\widehat{S}=Hom(S,\Bbb G_m)$ be the character group of $S$. Then one has the short exact sequence of $\Lambda$-module
$$ 0 \longrightarrow \Bbb Z \longrightarrow Ind_\Upsilon^\Lambda(\Bbb Z) \longrightarrow \widehat{S}\longrightarrow 0 $$ which gives the long exact sequence
$$ \rightarrow H^1(\Lambda,Ind_\Upsilon^\Lambda(\Bbb Z)) \rightarrow H^1(\Lambda, \widehat{S})\rightarrow H^2(\Lambda,\Bbb Z)\rightarrow H^2(\Lambda, Ind_\Upsilon^\Lambda(\Bbb Z)) $$
where $$H^1(\Lambda,Ind_\Upsilon^\Lambda(\Bbb Z))=H^1(\Upsilon,\Bbb Z)=0$$ and $$H^2(\Lambda,\Bbb Z)=H^1(\Lambda,\Bbb Q/\Bbb Z)=Hom(\Lambda,\Bbb Q/\Bbb Z)$$ and $$H^2(\Lambda, Ind_\Upsilon^\Lambda(\Bbb Z))=H^2(\Upsilon, \Bbb Z)=H^1(\Upsilon, \Bbb Q/\Bbb Z)=Hom(\Upsilon, \Bbb Q/\Bbb Z)$$ by Shapiro's Lemma. Since
$$ 0\rightarrow H^1(\Lambda,\widehat{S})\xrightarrow{inf} H^1(\Bbb Q, \widehat{S})\xrightarrow{res} H^1(L,\widehat{S})$$ and
$$ H^1(L,\widehat{S})=Hom(\Gal(\bar{\Bbb Q}/L),\widehat{S})=0 $$ where $\bar{\Bbb Q}$ is an algebraic closure of $\Bbb Q$, one concludes that
$$Pic(S)= H^1(\Bbb Q, \widehat{S})=H^1(\Lambda,\widehat{S})=ker(\widehat{\Lambda}\rightarrow \widehat{\Upsilon})  $$ by Theorem of \S 4.3 of Chapter 2 in \cite{Vo}, where $$\widehat{\Lambda}= Hom(\Lambda,\Bbb Q/\Bbb Z) \ \ \ \text{ and } \ \ \ \widehat{\Upsilon}=Hom(\Upsilon, \Bbb Q/\Bbb Z) . $$

By Proposition 2.10 of \cite{CTX}, one only needs to consider that $Pic(S)$ is not trivial. For any $\chi \in ker(\widehat{\Lambda}\rightarrow \widehat{\Upsilon})$ with $\chi\neq 1$, one gets a non-trivial abelian extension $A/\Bbb Q$ inside $L/\Bbb Q$ such that $ker(\chi)=\Gal(L/A)$. By Minkowski's Theorem (see (2.17) of Chapter III in \cite{N}), there is a prime $p$ such that $p$ is ramified in $A/\Bbb Q$. By (1.7) Proposition of Chapter V and (5.6) Proposition of Chapter VI in \cite{N}, there is an idele $(u_l)_l$ of $\Bbb Q$ defined by $u_l=1$ for $l\neq p$ and $u_p\in \Bbb Z_p^\times$ for $l=p$ such that
$$ \chi (\psi_{A/\Bbb Q}((u_l)_l)) \neq 1$$ where $\psi_{A/\Bbb Q}$ is the Artin map.

Let $\xi\in Br(X)$ be the image of $\chi$ under the map $$\delta_{tors}(SL_n): Pic(S)\rightarrow Br(X)$$ defined in P.314 of \cite{CTX}. By Proposition 2.10 in \cite{CTX}, one has that $\xi$ is not trivial in $Br(X)/Br(\Bbb Q)$. Applying the change of variables on $X(\Bbb Q_p)$ by using the diagonal matrix $diag(u_p^{-1},1,\cdots,1)$ $$(x_{i,j})\mapsto diag(u_p^{-1},1,\cdots,1)\cdot (x_{i,j}) \cdot diag(u_p, 1,\cdots,1)$$ which leaves ${\bf {X}}(\Bbb Z_p)$ stable, one obtains that
$$\int_{{\bf {X}}(\Bbb Z_p)} \xi d_p=\chi (\psi_{A/\Bbb Q}((u_l)_l ))\int_{{\bf {X}}(\Bbb Z_p)} \xi d_p
$$ by (\ref{partial}) and the diagram (3.1) in \cite{CTX}. Therefore
$$\int_{{\bf {X}}(\Bbb Z_p)} \xi d_p=0$$ and the result follows from Theorem \ref{semisimple} and Proposition 2.10 (ii) in \cite{CTX}.
\end{proof}

We will answer a question raised by Borovoi how to compute the ratio of the number of the integral solutions with the Hardy-Littlewood expectation for Example 6.3 in \cite{BR95}.

\begin{exa} Let $a\in \Bbb Z$ with $a\neq 0$ and ${\bf X}_a$ be a scheme over $\Bbb Z$ defined by the equation
$$ det(x_{i,j})_{n\times n} =a \ \ \ \text{ with  $x_{i,j}=x_{j,i}$ \ for all $1\leq i,j\leq n$ } $$ with $X_a={\bf X}_a\times_{\Bbb Z} \Bbb Q$ and $n\geq 3$. Define $$c_n(a)= \lim_{T\rightarrow \infty} \frac{N({\bf X}_a,T)}{(\prod_{p<\infty} \int_{{\bf X}_a(\Bbb Z_p)} d_p)\cdot \int_{{\bf X}_a(\Bbb R, T)} d_\infty } .$$ Then
$$ c_n(a)=1+ \prod_{p|2a}\frac{\int_{{\bf X}_a(\Bbb Z_p)} h \ d_p}{ \int_{{\bf X}_a(\Bbb Z_p)} d_p} \cdot \lim_{T\rightarrow \infty} \frac{\int_{{\bf X}_a(\Bbb R, T)} h  \ d_\infty }{\int_{{\bf X}_a(\Bbb R, T)} d_\infty}$$ where $h: X(\Bbb Q_p)\longrightarrow \{\pm1\}$ is the Hasse-Witt symbol function defined in P.167 of \cite{OM}. In particular   $$c_n(a)= \begin{cases} 1 \ \ \ & \text{if $a>0$ and $n\equiv 2 \mod 4$} \\
\frac{1}{2} \ \ \ & \text{if $a=1$ and $n=3$.} \end{cases} $$
\end{exa}

\begin{proof} Since $SL_n$ acts on $X_a$ by $ x\circ g = g'xg$ where $g'$ is the transpose of $g$, one has that $X_a$ is the homogeneous space of $SL_n$ with a rational point of diagonal matrix $v= diag(a,1,\cdots,1)\in {X_a}(\Bbb Q)$ and the stabilizer of $v$ is the special orthogonal group $SO(v)$ defined by $diag(a,1,\cdots,1)$. Moreover,  one gets that $$Pic(SO(v))\cong \Bbb Z/(2)$$ by Proposition 2.5 and 2.6 in \cite{CTX}. The non-trivial element $E$ of $Pic(SO(v))$ gives the central extension of $SO(v)$ by $\Bbb G_m$ satisfying the following diagram
\[ \begin{CD}
1 @>>>  \mu_2 @>>> Spin @>>> SO(v) @>>> 1  \\
       @.  @VVV  @VVV @V{id}VV @.  \\
1 @>>>  \Bbb G_m @>>> E @>>> SO(v) @>>>  1
\end{CD} \]
because $Spin$ is almost simple. Applying the Galois cohomology, one has
\begin{equation} \label{hw} \begin{CD}
H^1(k, SO(v))  @>{h'}>> _2Br(k)  \\
        @V{id}VV @VVV  \\
H^1(k, SO(v)) @>>>  Br(k)
\end{CD} \end{equation}
for any field extension $k/\Bbb Q$, where $h'$ is the Hasse-Witt invariant by (31.41) in \cite{KMRT}. By Proposition 2.10 in \cite{CTX}, one has that $\delta_{tors}(SL_n)(E)$ is the non-trivial element $\xi$ of $Br(X_a)/Br(\Bbb Q)$. Moreover, the evaluation of $\xi$ over $X_a(k)$ is equal to the Hasse-Witt invariant of the corresponding torsor by Proposition 2.9 in \cite{CTX} and the above commutative diagram (\ref{hw}). By the commutative diagram (3.1) in \cite{CTX} and Theorem \ref{semisimple} and the Hilbert reciprocity law (see Chapter VII in \cite{OM}), one concludes that $$ c_n(a)=1+  \frac{(\prod_{p<\infty} \int_{{\bf X}_a(\Bbb Z_p)} h \ d_p)}{(\prod_{p<\infty} \int_{{\bf X}_a(\Bbb Z_p)} d_p)} \cdot \lim_{T\rightarrow \infty} \frac{\int_{{\bf X}_a(\Bbb R, T)} h  \ d_\infty }{\int_{{\bf X}_a(\Bbb R, T)} d_\infty}$$ where $h: X_a(\Bbb Q_p)\longrightarrow \{\pm1\}$ is the Hasse-Witt symbol function. If $p\nmid 2a$, then $h(x)=1$ by 92:1 in \cite{OM} and the first part of the result follows.

 There are only finitely many orbits of $SL_n(\Bbb R)$ inside $X_a(\Bbb R)$ classified by the signatures. Among these orbits, the most significant orbits $Y_{+}(\Bbb R)$ and $Y_{-}(\Bbb R)$ are given by the exact sequence
$$ 1\longrightarrow SO_{\pm} \longrightarrow SL_n(\Bbb R) \xrightarrow{\pi_\pm} Y_\pm(\Bbb R) \longrightarrow 1 $$
where $SO_\pm$ are the special orthogonal groups defined by $\pm I_n$ respectively. Both $SO_{\pm}$ are the compact subgroups of $SL_n(\Bbb R)$. The rest orbits $Y_i$ are given by
$$ 1\longrightarrow SO_i \longrightarrow SL_n(\Bbb R) \xrightarrow{\pi_i} Y_i (\Bbb R) \longrightarrow 1 $$
where $SO_i$ are the special orthogonal groups defined by the rest of representatives of the orbits of $SL_n(\Bbb R)$ whose signatures are different from those of $\pm I_n$. Then such $SO_i$'s are not compact. Since
$$\int_{{\bf X}_a(\Bbb R, T)\cap Y_\pm(\Bbb R)} d_\infty = \frac{Vol(\pi_\pm^{-1}({\bf X}_a(\Bbb R, T)\cap Y_\pm(\Bbb R)))}{Vol(\pi_\pm^{-1}({\bf X}_a(\Bbb R, T)\cap Y_\pm(\Bbb R))\cap SO_\pm) }$$
and $$ \int_{{\bf X}_a(\Bbb R, T)\cap Y_i(\Bbb R)}d_\infty
= \frac{Vol(\pi_i^{-1}({\bf X}_a(\Bbb R, T)\cap Y_i(\Bbb R)))}{Vol(\pi_i^{-1}({\bf X}_a(\Bbb R, T)\cap Y_i(\Bbb R))\cap SO_i )},  $$
where
$$ \lim_{T\rightarrow \infty} Vol(\pi_\pm^{-1}({\bf X}_a(\Bbb R, T)\cap Y_\pm(\Bbb R))\cap SO_\pm) = Vol(SO_\pm) <\infty$$ and
$$ \lim_{T\rightarrow \infty} Vol(\pi_i^{-1}({\bf X}_a(\Bbb R, T)\cap Y_i(\Bbb R))\cap SO_i) = \infty ,$$ one concludes that
$$\lim_{T\rightarrow \infty} \frac{\int_{{\bf X}_a(\Bbb R, T)} h  \ d_\infty }{\int_{{\bf X}_a(\Bbb R, T)} d_\infty}=
\begin{cases} 1  \ \ \ & \text{if $a>0$ and $n\equiv 1 \mod 2$} \\
 1 \ \ \ & \text{if $a<0$ and $n\equiv 3 \mod 4$} \\
 -1 \ \ \ & \text{if $a<0$ and $n\equiv 1 \mod 4$} \\
 1 \ \ \ & \text{if $a>0$ and $n\equiv 0 \mod 4$} \\
 0 \ \ \ & \text{if $a>0$ and $n\equiv 2 \mod 4$}
 \end{cases}$$
by computation of Hasse-Witt invariant over $\Bbb R$.

For $a=1$ and $n=3$, one has
$$c_3(1)= 1+ \frac{\int_{{\bf X}_1(\Bbb Z_2)} h \ d_2}{ \int_{{\bf X}_1(\Bbb Z_2)} d_2} .$$ By 93:18 (iv) in \cite{OM}, there are two orbits of ${\bf X}_1(\Bbb Z_2)$ under the action of $SL_3(\Bbb Z_2)$ with the representatives
$$ L_{-1}=\left(\begin{array} [c]{lll}
1 \ \ & 0 \ \ & 0   \\ 0 \ \ & -1  \ \ & 0  \\
0 \ \ & 0  \ \ & -1
\end{array}
\right)  \ \ \ \text{and} \ \ \ L_1= \left(\begin{array} [c]{lll}
2 \ \ & 1 \ \ & 0   \\ 1 \ \ & 2  \ \ & 0  \\
0 \ \ & 0  \ \ & 3^{-1}
\end{array}
\right) $$
where $h(L_{-1})=-1$ and $h(L_1)=1$. Therefore
$$ \int_{{\bf X}_1(\Bbb Z_2)} h \ d_2= \frac{Vol(SL_3(\Bbb Z_2))}{Vol(SO(L_1))}-\frac{Vol(SL_3(\Bbb Z_2))}{Vol(SO(L_{-1}))}$$ and
$$ \int_{{\bf X}_1(\Bbb Z_2)}  \ d_2= \frac{Vol(SL_3(\Bbb Z_2))}{Vol(SO(L_1))}+\frac{Vol(SL_3(\Bbb Z_2))}{Vol(SO(L_{-1}))}. $$ By Lemma 1.8.1 in \cite{BR95}, one has  $$\beta_2(L_1, L_1)=Vol(SO(L_1)) \ \ \ \text{and} \ \ \ \beta_2(L_{-1}, L_{-1})=Vol(SO(L_{-1})) $$ where $\beta_2(L_1, L_1)$ and $\beta_2(L_{-1}, L_{-1})$ are local densities in sense of \S 5.6 of Chapter 5 \cite{Ki}. Since
$$ \frac{\beta_2(L_1,L_1)}{\beta_2(L_{-1},L_{-1})} = \frac{1+2^{-1}}{1-2^{-1}}=3$$
by Theorem 5.6.3 in \cite{Ki}, one has
$$ c_3(1)= 1+ \frac{1-\frac{\beta_2(L_1,L_1)}{\beta_2(L_{-1},L_{-1})}}{1+\frac{\beta_2(L_1,L_1)}{\beta_2(L_{-1},L_{-1})} }=\frac{1}{2} . $$
\end{proof}
\bigskip

\bf{Acknowledgment} \it{Part of the work was done when the second author visited University of Nancy in April, 2012. He would like to thank Wu Jie for his kind invitation and a lot of helpful discussion. The first author is supported by NSFC, grant \#
10671104, 973 Program 2013CB834202 and grant DE 1646/2-1 of the Deutsche Forschungsgemeinschaft. The second author is supported by NSFC, grant \#
10325105 and \# 10531060. }

\begin{bibdiv}

\begin{biblist}

\bib{BO} {article} {
    author={Y.Benoist},
    author={H.Oh}
 title={Effective equidistribution of S-integral points on symmetric varieties},
  journal={preprint},
    volume={},
      date={},
    pages={},
    number={}
 }

\bib{BD} {article} {
    author={M.Borovoi},
    author={C.Demarche},
    title={Manin obstruction to strong approximation for homogeneous spaces},
    journal={to appear in Commentarii Mathematici Helvetici},
    volume={},
    date={},
    Pages={},
}

\bib {BR95} {article} {
   author={M.Borovoi},
   author={Z.Rudnick},
   title={Hardy-Littlewood varieties and semisimple groups},
   journal={Invent. Math.},
   volume={119},
   date={1995},
   number={},
   Pages={37 \ndash 66},
}

\bib{CTX} {article} {
    author={J-L.Colliot-Th\'el\`ene},
    author={F.Xu},
    title={Brauer-Manin obstruction for integral points of homogeneous spaces and
         representations by integral quadratic forms},
    journal={Compositio Math.},
    volume={145}
    date={2009},
    Pages={309\ndash 363},
}

\bib{Dem} {article} {
    author={C. Demarche},
 title={Le d\'{e}faut d'approximation forte dans les groupes lin\'eaires connexes},
  journal={Proc.London Math.Soc.},
    volume={102},
    number={3}
      date={2011},
    pages={563\ndash 597},
 }

\bib{DRS} {article} {
    author={W.Duke},
    author={Z.Rudnick},
    author={P.Sanark},
 title={Density of integer points on affine homogeneous varieties},
  journal={Duke Math. J.},
    volume={71},
      date={1993},
    pages={143 \ndash 179},
    number={}
 }

\bib{EM} {article} {
    author={A.Eskin},
    author={C.McMullen},
 title={Mixing, counting, and equidistribution in Lie groups},
  journal={Duke Math. J.},
    volume={71},
      date={1993},
    pages={181 \ndash 209},
    number={}
 }

\bib{EMS} {article} {
    author={A.Eskin},
    author={C.McMullen},
    author={N.Shah},
 title={Unipotent flows and counting lattice points on homogeneous varieties},
  journal={Annals of Math.},
    volume={143},
      date={1996},
    pages={253 \ndash 199},
    number={}
 }

\bib{Ha08} {article} {
    author={D. Harari},
 title={Le d\'{e}faut d'approximation forte pour les groupes alg\'{e}briques commutatifs},
  journal={Algebra and Number Theory},
    volume={2},
      date={2008},
    pages={595\ndash 611},
    number={5}
 }

\bib{Hart}{book}{
    author={R. Hartshorne},
     title={Algebraic Geometry},
       volume={},
     publisher={Springer-Verlag},
     place={},
      date={1977},
   journal={ },
    series={GTM},
    volume={52},
    number={ },
}

\bib{Ki}{book}{
    author={Y. Kitaoka},
     title={Arithmetic of Quadratic Forms},
       volume={106},
     publisher={Cambridge University Press},
     place={},
      date={1993},
   journal={ },
    series={Cambridge Tracts in Mathematics},
    number={ },
}

\bib{Kn} {article} {
    author={M.Kneser},
 title={Darstellungsmasse indefiniter quadratischer Formen},
  journal={Math. Z.},
    volume={77},
      date={1961},
    pages={188\ndash 194},
    number={}
 }

\bib{KMRT}{book}{
    author={M.-A. Knus},
    author={A.Merkurjev}
    author={M.Rost}
    author={J.-P. Tignol}
     title={The Book of Involutions},
       volume={44},
     publisher={American Mathematical Society},
     place={},
      date={1998},
   journal={ },
    series={Colloquium Publication},
    number={ },
}

\bib{kot} {article} {
    author={R.E.Kottwitz},
 title={Stable trace formula: Cuspidal tempered terms},
  journal={Duke Math. J.},
    volume={51},
      date={1984},
    pages={611\ndash 650},
    number={3}
 }

\bib{kot88} {article} {
    author={R.E.Kottwitz},
 title={Tamagawa numbers},
  journal={Annals of Math. },
    volume={127},
      date={1988},
    pages={629 \ndash 646},
    number={}
 }

\bib{Milne80}{book}{
    author={ J. S. Milne},
     title={Etale cohomology},
       volume={ },
     publisher={Princeton Press},
     place={},
      date={1980},
   journal={ },
    series={},
    volume={},
    number={ },
}

\bib{Milne86}{book}{
    author={ J. S. Milne},
     title={Arithmetic duality theorems},
       volume={ },
     publisher={Academic Press},
     place={},
      date={1986},
   journal={ },
    series={Perspectives in Mathematics},
    volume={1},
    number={ },
}

\bib{N}{book}{
    author={ J.Neukirch},
    title={Algebraic Number Fields},
    volume={322},
    publisher={Springer},
    series={Grundlehren},
    date={1999},
}

\bib{NSW}{book}{
    author={ J.Neukirch},
    author={A.Schmidt},
    author={K.Wingberg},
    title={Cohomology of Number Fields},
    volume={323},
    publisher={Springer},
    series={Grundlehren},
    date={2000},
}

\bib{OM}{book}{author={O.T.O'Meara},
title={Introduction to Quadratic Forms}, publisher={Springer}
place={}, date={1973}, volume={}, number={}, }

\bib{On} {article} {
    author={T.Ono},
 title={On the Tamagawa number of algebraic tori},
  journal={Ann. of Math.},
    volume={78},
      date={1963},
    pages={47\ndash 73},
    number={}
 }

\bib{PR94}{book}{
    author={V. P. Platonov},
    author={A. S. Rapinchuk},
     title={Algebraic groups and  number theory},
     publisher={Academic Press},
     place={},
      date={1994},
    volume={ },
    number={ },
}

\bib{PV}{book}{
    author={V.L.Popov},
    author={E.B.Vinberg}
     title={II. Invariant Theory},
     publisher={Springer-Verlag},
     place={Berlin Heidelberg},
      date={1994},
    series={Algebraic Geometry IV: Linear Algebraic Groups, Invariant Theory; Edited by A.N.Parshin and I.R.Shafarevich},
    volume={},
    number={ },
}

\bib{San} {article} {
    author={J.-J. Sansuc},
 title={Groupe de Brauer et arithm\'{e}tique des groupes alg\'{e}briques lin\'{e}aires sur un corps de nombres },
  journal={J.reine angew. Math.},
    volume={327},
      date={1981},
    pages={12\ndash 80},
    number={}
 }

\bib{Sch} {article} {
    author={W.M. Schmidt},
 title={The density of integer points on homogeneous varieties },
  journal={Acta Math.},
    volume={154},
      date={1985},
    pages={243\ndash 296},
    number={3-4}
 }

\bib{SP} {article} {
    author={R. Schulze-Pillot},
 title={Darstellungsmasse von Spinorgeschlechtern tern\"arer quadratischer Formen},
  journal={J. reine angew. Math.},
    volume={352},
      date={1984},
    pages={114 \ndash 132},
    number={}
 }

\bib {Ser}{book}{
    author={J. P. Serre},
     title={Cohomologie Galoisienne},
     publisher={Springer},
     place={Berlin},
      journal={ },
            series={Lecture Notes in Mathematics},
    volume={5},
    date={1965},
    number={ },
     pages={},
     }

\bib{Shy} {article} {
    author={J.M. Shyr},
 title={A generalization of Dirichlet's unit theorem},
  journal={J. Number Theory},
    volume={9},
      date={1977},
    pages={213\ndash 217},
    number={2}
 }

\bib {Sko}{book}{
    author={A. N. Skorobogatov},
     title={Torsors and rational points},
     publisher={Cambridge University Press},
     place={},
      journal={ },
            series={Cambridge Tracts in Mathematics},
    volume={144},
    date={2001},
    number={ },
     pages={},
}

\bib{Vo}{book}{
    author={V.E.Voskresenskii},
     title={Algebraic groups and their birational invariants},
       volume={179},
     publisher={AMS},
     place={},
      date={1998},
   journal={ },
    series={Translations of Mathematical Monographs},
    number={ },
}

\bib{We62} {article} {
    author={A. Weil},
 title={Sur la the\'orie des formes quadratiques},
  journal={Colloque sur la th\'eorie des groupes alg\'ebriques, C.B.R.M. Bruxelles},
    volume={},
    number={}
      date={1962},
    pages={9-22},
 }

\bib {W0}{book}{
    author={A. Weil},
     title={Basic Number Theory},
     publisher={Springer},
     place={Berlin-Heidelberg-New York},
      journal={ },
            series={},
    volume={},
    date={1974},
    number={ },
     pages={},
}

\bib {W}{book}{
    author={A. Weil},
     title={Adeles and Algebraic Groups},
     publisher={Birkh\"auser},
     place={Boston},
      journal={ },
            series={},
    volume={},
    date={1982},
    number={ },
     pages={},
}

\bib{WX} {article} {
    author={D. Wei},
    author={F. Xu},
 title={Integral points for multi-norm tori},
  journal={Proc. London Math. Soc.},
    volume={104},
    number={5}
      date={2012},
    pages={1019 \ndash 1044},
 }

\bib{WX1} {article} {
    author={D. Wei},
    author={F. Xu},
 title={Integral points for groups of multiplicative type},
  journal={ Adv. Math.},
    volume={232},
    number={1}
      date={2013},
    pages={36\ndash 56},
 }

\bib{X} {article} {
    author={F. Xu},
 title={Representation masses of spinor genera},
  journal={Duke Math. J.},
    volume={110},
    number={2}
      date={2001},
    pages={279\ndash 307},
 }

\end{biblist}
\end{bibdiv}

\end{document}